\definecolor{mred0}{rgb}{0,0.406,0.596}
\definecolor{mcyan}{rgb}{0,0.501,0.501}
\definecolor{mcyan}{rgb}{0,0.501,0.501}
\definecolor{mblue}{rgb}{0,0.406,0.796}
\definecolor{mcgreen}{rgb}{0.180,0.545,0.41}
\definecolor{mred0}{rgb}{0,0.406,0.596}
\definecolor{mred3}{rgb}{0.6,0,0.8}
\definecolor{DarkGreen}{rgb}{0.278,0.701,0.913} 
\definecolor{mred1}{rgb}{0.180,0.545,0.341}
\numberwithin{equation}{section} 
\newtheorem{thm}{Theorem}[section]
\newtheorem{defi}{Definition}[section]
\newtheorem{lem}{Lemma}[section]
\newtheorem{rem}{Remark}[section]
\journal{Elsevier}
\def\ps@pprintTitle{%
	\let\@oddhead\@empty
	\let\@evenhead\@empty
	\def\@oddfoot{\reset@font\hfil\thepage\hfil}
	\let\@evenfoot\@oddfoot
}
\date{}
\begin{document}
	\maketitle
	\selectlanguage{english}
	\begin{abstract}
Water-borne diseases are still a major public health concern, as there are circumstances under which water could act as a carrier of the pathogen, extending their modeling beyond direct contact between hosts. In the present work, we introduce a new mathematical framework, coupling epidemiological dynamics with fluid motion, in order to understand the spatial spread of such an infection. Our model couples the classical Susceptible–Infected–Recovered (SIR) model with the Navier–Stokes equations describing the motion of fluids, which enhances the existing literature by simultaneously taking into account two aspects: the pathogen being transported by the water currents and the dependence of the effective viscosity of the fluid on the pathogen concentration. We apply the Faedo–Galerkin method and compactness arguments to prove the existence of a global, biologically feasible solution to the coupled SIR–Pathogen–Navier–Stokes (SIRPNS) system. Additionally, we investigate the uniqueness of such solutions in the two-dimensional case. Finally, by constructing a numerical scheme based on the semi-implicit scheme in time and the finite element method in space, we run several numerical simulations to show how infection dispersal, environmental contamination, and hydrodynamic feedback together govern the spatial dynamics, persistence, and eventual decline of waterborne epidemics.
	\\[1ex]
		\textit{Keywords}: 	Epidemic model, Navier-Stokes equations, Mathematical well-posedness, Finite element method, Numerical simulations, Public Health, Disease Prevention, Health Promotion\\
		\textit{2020 MSC}: 35K57, 35Q92, 49K20, 76Zxx, 92D25, 92D30
	\end{abstract}
	\begin{frontmatter}
		\title{Waterborne epidemics via a new coupled SIR–Pathogen--Navier–Stokes system: Mathematical modeling, nonlinear analysis and numerical simulation}

		\author{Mohamed Mehdaoui}
		\address{Euromed University of Fez, UEMF, 30 000, Fez, Morocco}
		\ead{m.mehdaoui@ueuromed.org, m.mehdaoui@edu.umi.ac.ma}
		
		\author{Yassine Ouzrour}
		\ead{yassine.ouzrour@usmba.ac.ma} 
		\address{Laboratoire de Mathématiques et Applications aux Sciences de l'Ingénieur, Ecole Normale Supérieure de Fès, Université Sidi Mohamed Ben Abdellah, Morocco}

	\end{frontmatter}

	\def\l({\left(}
	\def\r){\right)}
	\section{Introduction and motivation}\label{intro}
Analyzing the dynamics of infectious diseases through compartmental modeling started with the pioneering work by Kermack and McKendrick in the early twentieth century \cite{kermack1927contribution}. Their classic Susceptible–Infected–Recovered model has since become a standard tool in epidemic modeling, due to its simplicity yet outstanding predicting capacity. Since then, the basic formulation has been generalized in all ways to include an array of biological and environmental factors, closing the gap between theoretical predictions and realistic applications. When it comes to capturing the geographic nature of epidemics, numerous models governed by reaction–diffusion systems have been proposed. However, the existing literature still exhibits a notable limitation when it comes to the incorporation of environments where the pathogen transmission is mediated through a fluid medium. In fact, it is known from medical studies that pathogens of epidemics such as cholera, dysentery, or influenza can survive in the ambient environment and occasionally proliferate before infecting new hosts \cite{morse1995factors,lipp2002effects,gu2025distinct}, leading to the infection cycle illustrated by Fig. \ref{cyclefig}. Consequently, this results into the question of determining a suitable way to incorporate fluid dynamics into mathematical epidemic models, in the aim of increasing their practicality.

In comparison to the existing literature, when it comes to the modeling of environmental pathogens, the concentration is mathematically considered as either predetermined or regarded as passive scalars that are susceptible to diffusion or decay. We refer for example to \cite{capasso1978global,el2020mathematical,dong2022global,mehdaoui2024well,mehdaouii2023analysis,mehdaoui2023dynamical,sabbar2025refining,sabbar2024probabilistic,zagour2024time,mehdaoui2024optimal,wang2016reaction,hau2006wind,lacitignola2016backward,buonomo2012forces,shuai2013global,wang2025effect,mehdaoui2024new,zhang2018spatial,yang2025reaction}. When pathogen accumulation itself alters the fluid's physical characteristics, the intricate feedback that may occur is not taken into consideration. More specifically, variations in density, flow patterns, or viscosity brought on by pathogen concentrations are rarely taken into account.

Taking the above discussion into account, in this paper, our main goal is to get around the restriction of earlier works by creating a novel mathematical framework that combines fluid mechanics and epidemiological dynamics. In particular, we enhance the classical SIR model \cite{mehdaoui2023optimal} to explicitly interact with the incompressible Navier-Stokes equations \cite{bendahmane2025mathematical,bendahmane2024mathematical2,bendahmane2024mathematical1,bendahmane2026mathematical,ouzrour2025well}. Our approach results in a novelty exhibited by two aspects. First, the model captures the crucial role of fluid transport in regulating the spatial patterns of infection by taking into account the advection of both host densities and pathogen concentrations by the fluid velocity field. Second, a two-way coupling between epidemiological and hydrodynamic processes is established by treating the fluid's viscosity as a function of pathogen concentration.

\begin{figure}[h!]
\centering
\includegraphics[height=10cm]{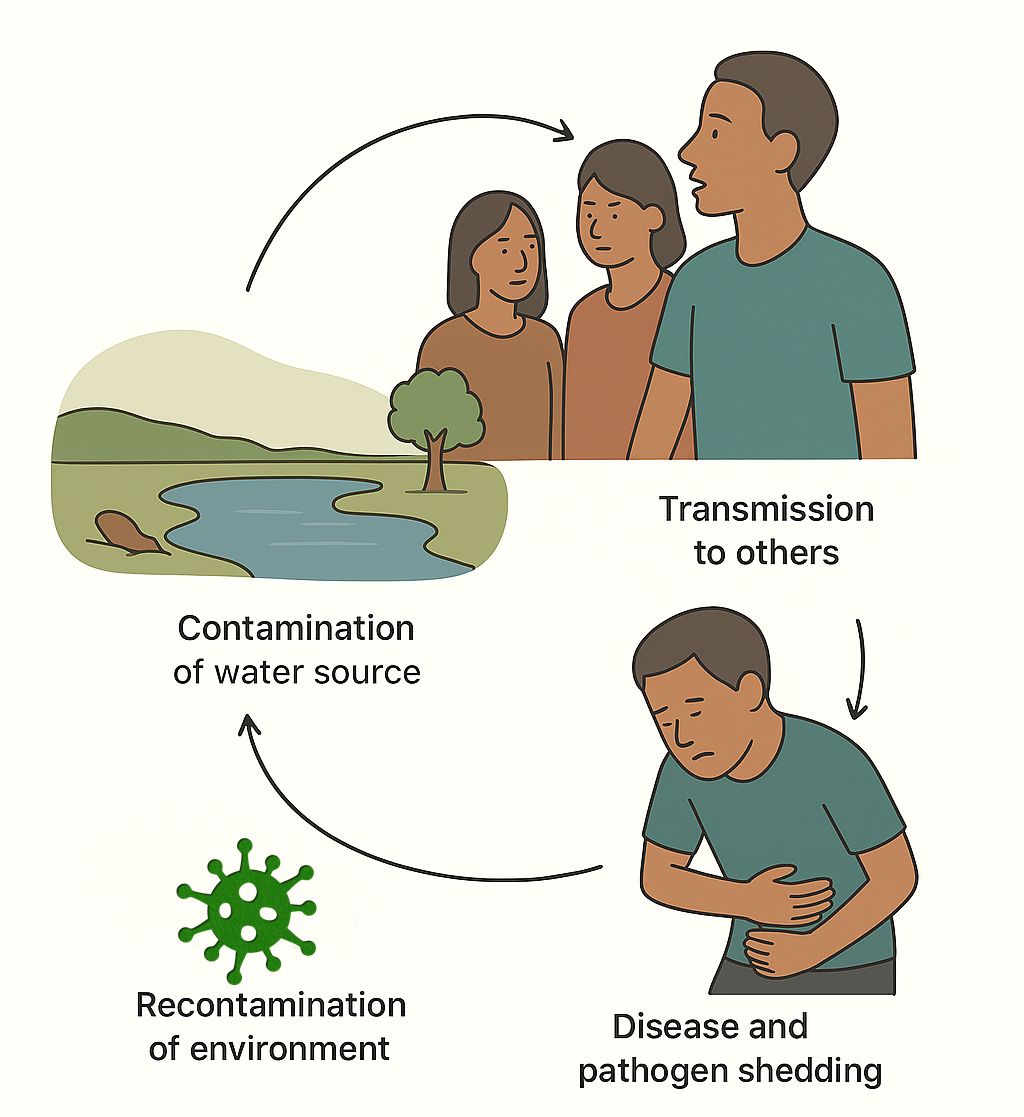}
\caption{The cycle of waterborne epidemics transmission.}
\label{cyclefig}
\end{figure}

The resulting coupled system of nonlinear partial differential equations poses a challenge when it comes to ensuring the existence, uniqueness as well as the regularity of the established solutions, mainly caused by the coupling between reaction-diffusion dynamics and Navier-Stokes flows through the nonstandard nonlinearity in the viscosity term, which complicates theoretical analysis and numerical simulation. The remainder of this paper is organized as follows. We establish the biological context and develop the model in Section~\ref{model}. The mathematical analysis of the system is covered in Section~\ref{analysis}, where we establish results on global existence, uniqueness, and the long-term behavior of solutions. We provide numerical simulations in Section~\ref{numerics} that demonstrate the qualitative aspects of the model and demonstrate how pathogen–viscosity feedback and fluid transport can change the course of epidemics in various scenarios. Section~\ref{conclusion} concludes by summarizing the key findings and suggesting possible lines of inquiry for further study.

\section{Model formulation}\label{model}

We present the mathematical formulation of the proposed framework. Let the habitat of interest be represented by a domain $\Omega \subset \mathbb{R}^d$ with $d \in \{2,3\}$, which is assumed to be open and bounded. By $\partial \Omega$ we denote the boundary of $\Omega$ and assume it hereafter to be smooth enough. Our aim is to describe, over a fixed time interval $(0,T)$ with $T>0$, the coupled dynamics of host populations, pathogen transport, and fluid motion in the context of waterborne epidemics. The model is built on the following biologically motivated assumptions:

\begin{enumerate}[label=\textcolor{blue}{({A${{_\arabic*}}$})}]
	\item \label{a1} \textbf{Indirect, environmental transmission.}  
	The transmission of pathogens through environmental reservoirs stands in contrast to direct host-to-host transmission which occurs with measles and influenza. The pathogens \emph{Vibrio cholerae} and enteric viruses serve as examples of such pathogens. The infection rate of susceptible people depends on the local pathogen concentration $C$ in the fluid through the function $\beta(C)$. The dose-response relationship in environmental epidemiology shows that infection risk grows with the strength of exposure according to this model.
	
	\item \textbf{Pathogen shedding by infected hosts.}  
	The environmental pathogen pool receives contributions from infected hosts through their pathogen release at a rate $\alpha$. 
	
	\item \textbf{Environmental persistence and decay.}  
	The external environment causes pathogens to experience mortality and inactivation and removal through natural processes including predation and chemical breakdown and sedimentation. The model includes exponential decay at rate $\lambda$ to represent the typical duration pathogens survive in fluid environments. 
	
	\item \textbf{Host mobility.}  
	The host population consists of three compartments S, I and R which move randomly throughout the habitat $\Omega$. The movement of hosts through their habitat occurs because of their natural behavior and social activities and water-seeking behavior. The model uses diffusion terms to describe host movement at a population density level instead of following specific individual paths.
	
	\item \textbf{Fluid-mediated transport.}  
	The model represents the surrounding fluid as an incompressible substance which moves according to the velocity field $\boldsymbol{U}$. The  density of hosts and pathogen concentration $C$ experience movement through this fluid flow. The model demonstrates how water currents and air ventilation systems distribute pathogens while determining the spread of infectious diseases.
	
	\item \label{a6} \textbf{Pathogen–fluid feedback.}  
	The model introduces a new mechanism which enables epidemiological processes to influence hydrodynamic operations and vice versa. The fluid viscosity $\nu(C)$ depends on pathogen concentration because microbial growth and organic matter accumulation modify water and mucus-like environments. 
\end{enumerate}

By combining Assumptions \ref{a1}–\ref{a6}, we obtain a new epidemic-pathogen-fluid system in which host dynamics, pathogen transport, and fluid motion are tightly interconnected. In what follows, we mathematically incorporate the given assumptions by relying on parabolic partial differential equations.

	\subsection{{Fluid dynamics}}
	
	\noindent
	The velocity field $\boldsymbol{U}$ of the surrounding medium is described by the following incompressible Navier–Stokes equation:

	\begin{equation}
		\begin{cases}
			\begin{split}
				&\partial_t \boldsymbol{U} +(\boldsymbol{U}\cdot\nabla)\boldsymbol{U} = -\nabla p + \operatorname{div}(\nu(C)\nabla\boldsymbol{U} )+ \boldsymbol{f}, &&\text{ in } Q_T:=\Omega \times (0,T), \\
				&\operatorname{div}(\boldsymbol{U})=0, &&\text{in } Q_T,\\
				&\boldsymbol{U}=0, && \text{ on } \Sigma_T:=\partial \Omega \times (0,T),\\
				&\boldsymbol{U}(.,0)=\boldsymbol{U}_0,  && \text{ in } \Omega,
			\end{split}
		\end{cases}
	\end{equation}
	where $\rho$ denotes the fluid density, $\nu$ its viscosity, and $\boldsymbol{f}$ represents external forces (e.g. gravity). The velocity field affects both the transport of pathogens and the spatial redistribution of individuals. In addition, we allow $\nu$ to depend on the pathogen concentration, thereby introducing a feedback from epidemiological to hydrodynamical dynamics.
	
	\subsection{{Pathogen concentration dynamics}}
	\noindent
	The concentration of pathogens in the environment is described by
	\begin{equation}
		\begin{cases}
			\begin{split}
				&\partial_t C + \boldsymbol{U}\cdot\nabla C = D_C\Delta C + \alpha I(x,t)(x) - \lambda C, &&\text{in } Q_T,\\
				&C=0, && \text{on } \Sigma_T,\\
				&C(.,0)=C_0, && \text{in } \Omega.
			\end{split}
		\end{cases}
	\end{equation}
	Here, $D_C$ stands for the diffusion coefficient of the pathogen, $\alpha$ the rate at which infected individuals shed pathogens into the environment. The last term accounts for pathogen clearance and natural decay at rate $\lambda$. The advection term $\boldsymbol{U}\cdot\nabla C$ reflects the transport of pathogens by the fluid flow.
	
	\subsection{{Host dynamics}}
	\noindent
	The host population is divided into three compartments, as follows:  The susceptible $S$, the infected $I$, and the recovered $R$. Each compartment evolves according to a reaction–diffusion–advection equation:
	\begin{equation}
		\begin{cases}
			\begin{split}
				&\partial_t S= D_S\Delta S +\Lambda- \beta(C)\frac{SI}{N}-\eta S, &&\text{in } Q_T,\\
				&\partial_t I= D_I\Delta I + \beta(C)\frac{SI}{N} - \gamma I-\eta I, &&\text{in } Q_T,\\
				&\partial_t R= D_R\Delta R + \gamma I-\eta R, &&\text{in } Q_T,\\
				&\nabla S \cdot \overrightarrow{n}=\nabla I \cdot \overrightarrow{n}=\nabla R \cdot \overrightarrow{n}=0, &&\text{on } \Sigma_T,\\
				&(S(.,0),I(.,0),R(.,0))=(S_0,I_0,R_0), &&\text{in } \Omega,
			\end{split}
		\end{cases}
	\end{equation}
	where $D_S$, $D_I$, and $D_R$ denote the diffusion coefficients of the respective compartments, $N=S+I+R$ is the total population, $\Lambda$ is the birth rate, $\eta$ is the death rate, $\beta(C)$ is the transmission rate modulated by the local pathogen concentration $C$, and $\gamma$ is the recovery rate. The term $\beta(C)\frac{SI}{N}$ models new infections, while $\gamma I$ accounts for recovery transitions.
	
	\subsection{{The resulting coupled system}}
	\noindent
	Collecting the above equations, the full SIR--Pathogen--Navier-Stokes (SIRPNS) model reads as follows:
	\begin{equation}\label{SIRCNS}
		\begin{cases}
			\begin{split}
				&\partial_t \boldsymbol{U} +(\boldsymbol{U}\cdot\nabla)\boldsymbol{U} = -\nabla p + \operatorname{div}(\nu(C)\nabla\boldsymbol{U} ) + \boldsymbol{f}, &&\text{in } Q_T,\\
				&\operatorname{div}(\boldsymbol{U})=0, &&\text{in } Q_T,\\
				&\partial_t C + \boldsymbol{U}\cdot\nabla C= D_C\Delta C + \alpha I  - \lambda C, &&\text{in } Q_T,\\
				&\partial_t S= D_S\Delta S - \beta(C)\frac{S I}{N}+\Lambda - \eta S, &&\text{in } Q_T,\\
				&\partial_t I= D_I\Delta I + \beta(C)\frac{S I}{N} - \gamma I - \eta I, &&\text{in } Q_T,\\
				&\partial_t R= D_R\Delta R + \gamma I- \eta R, &&\text{in } Q_T,\\
				&\boldsymbol{U}=C=\nabla S \cdot \overrightarrow{n}=\nabla I \cdot \overrightarrow{n}=\nabla R \cdot \overrightarrow{n}=0,  && \text{on } \Sigma_T,\\
				&(\boldsymbol{U}(.,0),C(.,0),S(.,0),I(.,0),R(.,0))=(\boldsymbol{U}_0,C_0,S_0,I_0,R_0) ,  && \text{in }\Omega.
			\end{split}
		\end{cases}
	\end{equation}

	\noindent
	In the aim of illustrating the interactions captured by our model,  Figure \ref{fig:model_schematic} gives a diagram of the coupled dynamics and summarizes how the host compartments (susceptible, infected, recovered), the environmental pathogen concentration, and the fluid flow are interconnected through both direct biological processes and physical transport. 
\begin{figure}[H]
	\centering
	\begin{tikzpicture}[
		node distance = 3.5cm and 2.5cm,
		box/.style = {draw, rounded corners, rectangle, align=center, 
			minimum width=3.2cm, minimum height=1.6cm, 
			text width=2.8cm, font=\small, line width=0.7pt,
			inner sep=6pt},
		arrow/.style = {->, >=stealth, line width=0.8pt},
		label/.style = {font=\footnotesize, align=center, inner sep=3pt}
		]
		
		\node (host) [box, fill=blue!10] at (-2,4) {Host Population \\ {\footnotesize $S(x,t)$, $I(x,t)$, $R(x,t)$}};
		\node (pathogen) [box, fill=red!10] at (2,-1) {Pathogen Concentration \\ {\footnotesize $C(x,t)$}};
		\node (fluid) [box, fill=green!10] at (6.7,6.6) {Fluid Dynamics \\ {\footnotesize Velocity $\boldsymbol{U}(x,t)$} \\ {\footnotesize Viscosity $\nu(C)$}};
		
		\draw [arrow, blue!80!black] ([yshift=5pt] host.south) 
		to [bend left=20] 
		node [right, pos=0.7, align=left, xshift=8pt, label] {\footnotesize Shedding\\{\footnotesize $\alpha I$}} 
		([yshift=5pt] pathogen.north);
		
		\draw [arrow, blue!60!black]
		(host.north west) to [bend left=40]
		node [above left, align=right, label] {\hspace{-2.9cm} \footnotesize Natural death\\\hspace{-2.9cm} {\footnotesize $\eta$}}
		++(-2,1.2);
		
		\coordinate (birthNode) at (-2,8);
		\draw [arrow, blue!60!black]
		(birthNode) to [bend right=20]
		node [above, align=center, yshift=3pt, label] {\footnotesize \hspace{-2cm} Natural birth\\\hspace{-2cm} {\footnotesize $\Lambda$}}
		(host.north);
		
		\draw [arrow, red!80!black] ([yshift=-5pt] pathogen.north) 
		to [bend left=20] 
		node [left, pos=0.7, align=right, xshift=-8pt, label] {\footnotesize Infection\\{\footnotesize $\dfrac{\beta(C)SI}{N}$}} 
		([yshift=-5pt] host.south);
		
		\draw [arrow, red!80!black, line width=1pt] (pathogen.east) 
		to 
		node [above, pos=0.5, align=center, yshift=5pt,xshift=-40pt, label] {\footnotesize Modulates\\{\footnotesize Viscosity $\nu(C)$}} 
		(fluid.west);
		
		\draw [arrow, green!80!black, dashed] ([xshift=90pt,yshift=-22pt] fluid.west) 
		to [bend left=120] 
		node [below, pos=0.6, align=center,xshift=0.25cm ,yshift=-18pt, label] {\footnotesize Advection\\{\footnotesize $\boldsymbol{U} \cdot \nabla C$}} 
		([yshift=-8pt] pathogen.east);
		
		\draw [arrow, green!80!black, dashed] ([yshift=8pt,xshift=-1pt] fluid.west) 
		to [bend right=60] 
		node [above, pos=0.6, align=center, yshift=10pt,xshift=-19pt, label] {\footnotesize Host Transport\\{\footnotesize (Advection)}} 
		([xshift=2pt] host.east);
		
		\draw [arrow, blue!60!black] (host.west) 
		to [bend right=25] 
		node [left, align=right, xshift=-10pt, label] {\footnotesize Recovery\\{\footnotesize $\gamma I$}} 
		++(-1.5,-0.6);
		
		\draw [arrow, red!60!black] (pathogen.west) 
		to [bend left=25] 
		node [left, align=right, xshift=-10pt, label] {\footnotesize Decay\\{\footnotesize $-\lambda C$}} 
		++(-1.5,0.6);
		
		\draw [arrow, green!60!black] (fluid.east) 
		to [out=10, in=-10, looseness=1.5] 
		node [right, align=left, xshift=-2pt, label] {\footnotesize Self-advection\\{\footnotesize $(\boldsymbol{U}\cdot\nabla)\boldsymbol{U}$}} 
		([yshift=12pt] fluid.east);
		
	\end{tikzpicture}
	\caption{\textbf{Schematic diagram of the SIR–Pathogen–Navier–Stokes (SIRPNS) model couplings.} The diagram illustrates the two-way coupling between epidemiological dynamics (blue), pathogen transport (red), and fluid mechanics (green).}
	\label{fig:model_schematic}
\end{figure}
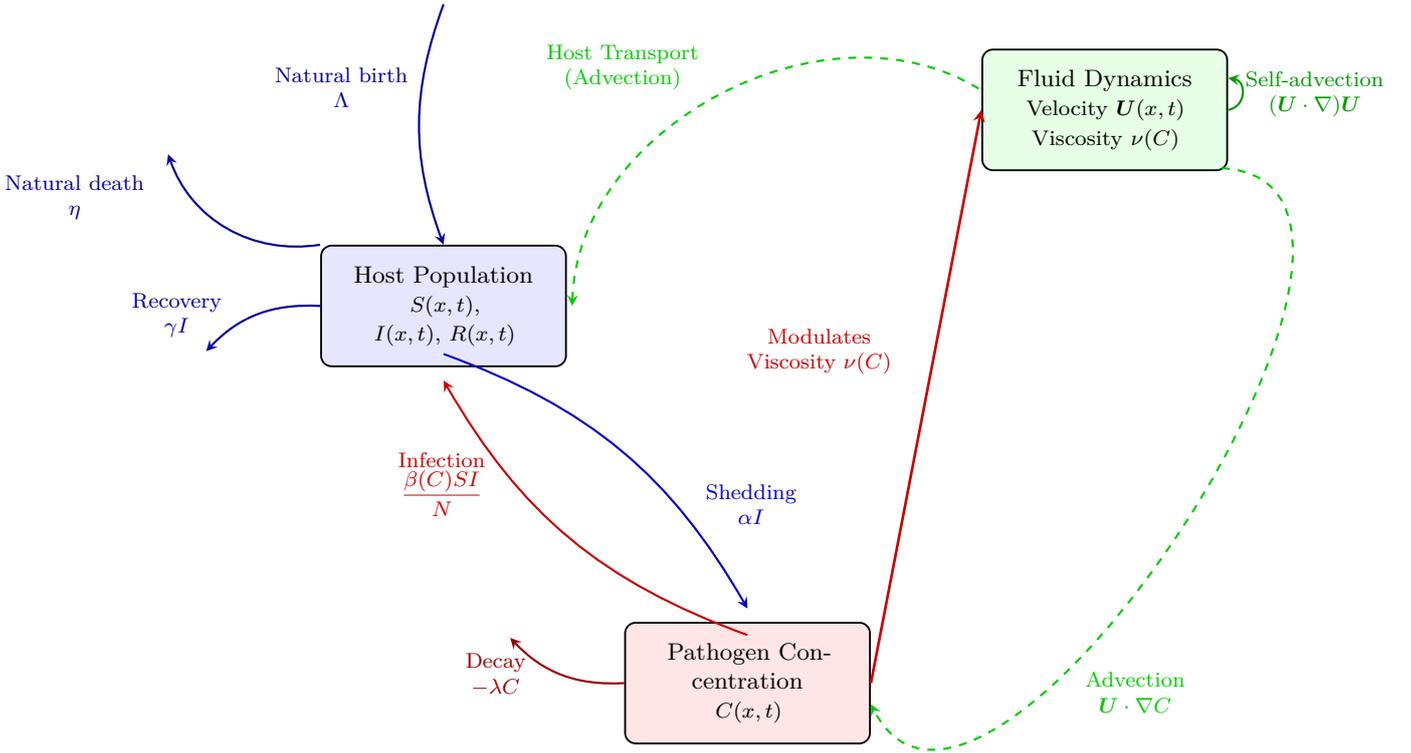

\section{Mathematical analysis of the SIRPNS model}\label{analysis}

\subsection{Functional framework and assumptions}
Before addressing the well-posedness of the SIRPNS model in the mathematical sense, we first establish its functional framework. We begin by denoting $L^p(\Omega)$ and $W^{m,p}(\Omega)$ as the standard Lebesgue and Sobolev spaces, respectively, which are defined on the open bounded set $\Omega$ for $1 \leq p \leq +\infty$ and $m \in \mathbb{N}$. For consistency, we denote the norm of $L^p(\Omega)$  by $\|\cdot\|_{L^p(\Omega)}$ for $1 \leq p \leq +\infty$. If $p = 2$, we employ the usual notation $H^m(\Omega) := W^{m,p}(\Omega)$. Given a Banach space $X$, $X'$ stands for the dual space of $X$, and $\norm{\cdot}_X$ refers to the norm in $X$ and any power of $X$. For all $1 \leq p \leq +\infty$, $L^p(0,T;X)$ represents the space of all measurable functions $u : (0,T) \to X$ for which $t \mapsto \|u(t)\|_X$ belongs to $L^p(0,T)$. 
Furthermore, we define the vector-valued spaces:
	\[
	\mathbf{L}^2(\Omega) = \big(L^2(\Omega)\big)^d, \quad 
	\mathbf{H}^1(\Omega) = \big(H^1(\Omega)\big)^d, \quad 
	\mathbf{H}_0^1(\Omega) = \big(H_0^1(\Omega)\big)^d.
	\]
Finally, we introduce the spaces $\mathcal{W}$, $\mathbf{H}$, and $\mathbf{V}$ as follows:
	\[
	\mathcal{W}:= \{\boldsymbol{U}\in \mathcal{D}(\Omega) \mid \operatorname{div} \boldsymbol{U} = 0\}, \quad 
	\mathbf{H}:= \overline{\mathcal{W}}^{\mathbf{H}^1_0(\Omega)}, \quad 
	\mathbf{V}:= \overline{\mathcal{W}}^{\mathbf{L}^2(\Omega)}.
	\]
	
	\noindent  We now state some preliminary results that will be needed in subsequent sections. The following well-known Gagliardo-Nirenberg inequality is essential (see~\cite{ladyvzenskaja1968linear}).
	\begin{align}
		\|\boldsymbol{Z}(t)\|_{\mathbf{L}^{4}(\Omega_b)} &\leq c\|\boldsymbol{Z}(t)\|_{\mathbf{H}^{1}(\Omega_b)}^{\zeta}\|\boldsymbol{Z}(t)\|_{\mathbf{L}^{2}(\Omega_b)}^{1-\zeta}, && \text{for all } \boldsymbol{Z} \in \mathbf{H}^{1}(\Omega_b), \label{Gagliardo-Nirenberg-U} \\
		\|\psi(t)\|_{L^{4}(\Omega)} &\leq c\|\psi(t)\|_{H^{1}(\Omega)}^{\zeta}\|\psi(t)\|_{L^{2}(\Omega)}^{1-\zeta}, && \text{for all } \psi \in H^{1}(\Omega) \label{Gagliardo-Nirenberg-C},
	\end{align}
	where $\zeta = d/4$.
	 On the other hand, our analysis will occasionally build Young's inequality with small parameter $\delta>0$:
	\begin{equation}\label{inq-Young}
		ab \leq \delta a^{p} + c(\delta) b^{q},
	\end{equation}
	with $a, b > 0$, $\delta > 0$, $1 < p, q < \infty$, $1/p + 1/q = 1$, and $c(\delta) = (\delta p)^{-q/p} q^{-1}$.

The last result that we recall in Aubin-Lions compactness lemma \cite{simon1986compact}, which reads as follows.
	
\begin{lem}\label{lemma-Aubin-Lions} Let $X_0, X$, and $X_1$ be three Banach spaces with $X_0 \subset X \subset X_1$. Suppose that $X_0$ is compactly embedded in $X$ and that $X$ is continuously embedded in $X_1$. Then:
	\begin{enumerate}
		\item 
	 If $G$ is bounded in $L^p\left(0, T ; X_0\right)$ for $1 \leq p<\infty$, and $\frac{\partial G}{\partial t}$ is bounded in $L^1\left(0, T ; X_1\right)$, then $G$ is relatively compact in $L^p(0, T ; X)$.
	\item If $F$ be bounded in $L^{\infty}\left(0, T ; X_0\right)$ and $\frac{\partial F}{\partial t}$ is bounded in $L^p\left(0, T ; X_1\right)$ with $p>1$, then $F$ is relatively compact in $C(0, T ; X)$.
	\end{enumerate}		
	\end{lem}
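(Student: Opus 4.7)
The plan is to reduce both assertions to classical infinite-dimensional compactness criteria, with the Ehrling--Lions interpolation inequality serving as the common engine. From the compact embedding $X_0 \hookrightarrow X$ and the continuous embedding $X \hookrightarrow X_1$, a standard proof-by-contradiction argument on the unit ball of $X_0$ yields: for every $\varepsilon > 0$ there exists $C_\varepsilon > 0$ such that
$$\|u\|_X \leq \varepsilon \|u\|_{X_0} + C_\varepsilon \|u\|_{X_1} \qquad \text{for all } u \in X_0.$$
This inequality is what will convert the weak time-regularity (controlled in the large norm $X_1$) into compactness in the intermediate norm $X$.

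For assertion (1), I would apply the Fr\'echet--Kolmogorov--Riesz criterion on $L^p((0,T);X)$, which requires uniform $L^p$-boundedness and uniform vanishing of the time translations $\int_0^{T-h}\|G(t+h)-G(t)\|_X^p\,dt$ as $h\to 0^+$. Boundedness follows from $X_0 \hookrightarrow X$ and the hypothesized $L^p(0,T;X_0)$-bound. For the translation estimate, the Ehrling inequality splits
$$\|G(t+h) - G(t)\|_X \leq \varepsilon \|G(t+h) - G(t)\|_{X_0} + C_\varepsilon \|G(t+h) - G(t)\|_{X_1},$$
whose $X_0$-part integrates to a quantity proportional to $\varepsilon^p$ and uniformly bounded, while the $X_1$-part is handled via
$$\|G(t+h)-G(t)\|_{X_1}\leq \int_t^{t+h}\|\partial_s G\|_{X_1}\,ds =: \phi_h(t).$$
Using $\phi_h(t) \leq M := \|\partial_t G\|_{L^1(0,T;X_1)}$ uniformly in $t$, we get the nonlinear bound $\phi_h^p \leq M^{p-1}\phi_h$, while a Fubini computation yields $\int_0^{T-h}\phi_h(t)\,dt \leq hM$, so $\int_0^{T-h}\phi_h(t)^p \, dt \leq h M^p \to 0$. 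Choosing $\varepsilon$ small first, then $h$ small, closes the argument.

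For assertion (2), I would invoke the vector-valued Arzel\`a--Ascoli theorem, which demands (i) pointwise relative compactness of $\{F(t)\}$ in $X$ for each $t$, immediate from the $L^\infty(0,T;X_0)$-bound together with the compact embedding, and (ii) uniform equicontinuity in $t$. The same Ehrling splitting applied to $F(t+h)-F(t)$ reduces equicontinuity to controlling the $X_1$-increment, which by H\"older's inequality satisfies
$$\|F(t+h)-F(t)\|_{X_1}\leq \int_t^{t+h}\|\partial_s F\|_{X_1}\,ds \leq h^{1-1/p}\|\partial_t F\|_{L^p(0,T;X_1)},$$
tending to zero as $h\to 0$. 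Here the hypothesis $p>1$ is essential, since the exponent $1-1/p$ would otherwise degenerate.

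The most delicate point, and the genuine conceptual content of Simon's refinement of the classical Aubin--Lions lemma, is the borderline case $p=1$ in (1): a naive H\"older estimate on the $X_1$-increment would yield an unusable $h^0=1$ factor, forcing one instead to exploit the mixed $L^\infty/L^1$ structure through the nonlinear interpolation $\phi_h^p \leq M^{p-1}\phi_h$. Apart from this subtlety, uniformity across the family is automatic: the Ehrling constants depend only on the triple $(X_0,X,X_1)$, and every time-increment estimate relies solely on the hypothesized uniform bounds on $G$, $F$ and their time derivatives.
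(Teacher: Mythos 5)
The paper contains no proof of this lemma at all: it is quoted as a known compactness result and attributed to Simon \cite{simon1986compact}, so your proposal can only be compared with the standard argument in that reference, which it essentially reconstructs (Ehrling interpolation from the compact embedding, translation estimates in the weak $X_1$-norm, a Kolmogorov-type criterion for part (1), Arzel\`a--Ascoli for part (2)). Your handling of the genuinely delicate points is correct: the Fubini bound $\int_0^{T-h}\phi_h(t)\,dt\le hM$ combined with the nonlinear interpolation $\phi_h^p\le M^{p-1}\phi_h$ is exactly the right way to exploit a time derivative that is merely bounded in $L^1(0,T;X_1)$, and the H\"older factor $h^{1-1/p}$ with $p>1$ is the correct source of uniform equicontinuity in part (2).

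There is, however, one genuine flaw in part (1) as written. The Fr\'echet--Kolmogorov--Riesz criterion in the vector-valued space $L^p(0,T;X)$ does \emph{not} consist only of uniform boundedness and uniform vanishing of time translations: when $\dim X=\infty$ these two conditions are insufficient. Take $G_n(t)\equiv e_n$ with $(e_n)$ an orthonormal sequence in a Hilbert space $X$; all translations vanish identically and the family is bounded, yet it has no convergent subsequence in $L^p(0,T;X)$. The correct vector-valued criterion (Theorem 1 of the very paper cited here) additionally requires relative compactness in $X$ of the averaged values $\bigl\{\int_{t_1}^{t_2}G(t)\,dt\bigr\}$ for $0<t_1<t_2<T$. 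Your hypotheses supply this immediately --- the $L^p(0,T;X_0)$ bound places these averages in a fixed ball of $X_0$, which is relatively compact in $X$ by the compact embedding --- so the proof is repaired by one added line; but as stated the criterion you invoke is false, and note that in your verification the compact embedding enters only through the Ehrling constant $C_\varepsilon$, which cannot substitute for compactness of values. A much smaller gloss occurs in part (2): the $L^\infty(0,T;X_0)$ bound holds only for a.e.\ $t$, so to get relative compactness of $\{F(t)\}$ in $X$ for \emph{every} $t$ you should use the continuity of $F$ with values in $X_1$ (guaranteed by $\partial_t F\in L^p(0,T;X_1)$ with $p>1$) to place each $F(t)$ in the $X$-closure of the relevant $X_0$-ball; this step is routine but should be said.
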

	
%
	
To prove the existence of weak solutions, we impose some assumptions in regards to the parameters of the model (\ref{SIRCNS}).
\begin{enumerate}
			\item[\textbf{(A1).}] The functions  $\nu=\nu(\cdot)$ and $\beta=\beta(\cdot)$ are continuous such that
			\begin{eqnarray}
				0<\nu_{1} \leq \nu(s) \leq \nu_{2} \qquad & \forall s\in \mathbb{R},\label{cond-mu}\\	
				0<\beta_{1} \leq \beta(s) \leq \beta_{2} \qquad & \forall s\in \mathbb{R}, \label{cond-beta}
			\end{eqnarray}
			where $\nu_1$, $\nu_2$, $\beta_1$ and  $\beta_2$  are positive constants.
			\item[\textbf{(A2).}]  $ \boldsymbol{U}_{0} \in \mathbf{L}^{2}(\Omega)$ and $ (C_0,S_0,I_0,R_0) \in (L^{2}(\Omega))^4$, and it holds that $C_0,S_0,I_0,R_0\geq0$.
			\item[\textbf{(A3).}] $\boldsymbol{f}\in L^2(0,T;\mathbf{L}^2(\Omega))$
\end{enumerate}

	\begin{defi}\label{defSol} A quintet  $(\boldsymbol{U},C,S,I,R)$ is sait to be a weak solution to Model (\ref{SIRCNS}), if it satisfies the following assertions: 
		\begin{equation*}
			\begin{split}
				& \boldsymbol{U} \in L^\infty(0,T; \mathbf{V})\cap L^2(0,T; \mathbf{H}),\;\;\partial_t \boldsymbol{U} \in L^2(0,T;(\mathbf{H})^\prime),
				\\
				&C \in L^\infty(0,T; L^2(\Omega))\cap L^2(0,T;  H^{1}_{0}(\Omega)),\;\;\partial_t C\in L^2(0,T;( H^{1}_{0}(\Omega))^\prime), 
				\\
				&S \in  L^\infty(0,T; L^2(\Omega))\cap L^2(0,T;  H^{1}_{0}(\Omega)),\;\;\partial_t S\in L^2(0,T;( H^{1}_{0}(\Omega))^\prime), 
				\\
				&I \in  L^\infty(0,T; L^2(\Omega))\cap L^2(0,T;  H^{1}_{0}(\Omega)),\;\;\partial_t I\in L^2(0,T;( H^{1}_{0}(\Omega))^\prime),
				\\
				&R \in  L^\infty(0,T; L^2(\Omega))\cap L^2(0,T;  H^{1}_{0}(\Omega)),\;\;\partial_t R\in L^2(0,T;( H^{1}_{0}(\Omega))^\prime)
			\end{split}
		\end{equation*}
		and the following identities hold
		\begin{eqnarray}\label{eqvar1}
			\begin{split}
				\int_{0}^{T}		\left\langle\partial_t\boldsymbol{U}, \boldsymbol{Z}\right\rangle\, dt+\iint_{Q_T}	\nu(C) \nabla \boldsymbol{U} : \nabla \boldsymbol{Z}\, d\mathbf{x}\, dt+\iint_{Q_T}	(\boldsymbol{U}\cdot\nabla)\boldsymbol{U} \cdot  \boldsymbol{Z}\, d\mathbf{x}\, dt\\
				-\iint_{Q_T}\boldsymbol{f}\cdot \boldsymbol{Z}\, d\mathbf{x}\, dt&=0,
				\\  
				\int_{0}^{T}	\left\langle\partial_tC, \psi_C \right\rangle\, dt+ \iint_{Q_T}  \boldsymbol{U}\cdot\nabla C\, \psi_C\, d\mathbf{x}\, dt+ D_C\iint_{Q_T}\nabla C \cdot\nabla \psi_C\, d\mathbf{x}\, dt\\ - \alpha\iint_{Q_T} I\, \psi_C\, d\mathbf{x}\, dt + \lambda \iint_{Q_T} C \,\psi_C \, d\mathbf{x}\, dt
				&=0,
				\\ 	  		
				\int_{0}^{T}	\left\langle\partial_t S, \psi_S \right\rangle\, dt+ D_S\iint_{Q_T}\nabla S \cdot\nabla \psi_S\, d\mathbf{x}\, dt+\iint_{Q_T}\beta(C) \frac{SI}{N} \, \psi_S\, d\mathbf{x}\, dt\\-\iint_{Q_T} \Lambda \,\psi_S \, d\mathbf{x}\, dt + \eta \iint_{Q_T} S \,\psi_S \, d\mathbf{x}\, dt
				&=0,
				\\ 	  		
				\int_{0}^{T}	\left\langle\partial_t I, \psi_I \right\rangle\,  dt+ D_I\iint_{Q_T}\nabla I \cdot\nabla \psi_I\, d\mathbf{x}\, dt-\iint_{Q_T}\beta(C) \frac{SI}{N} \, \psi_I\, d\mathbf{x}\, dt + (\eta+\gamma) \iint_{Q_T} I \,\psi_I \, d\mathbf{x}\, dt
				&=0,
				\\ 	  		
			\int_{0}^{T}	\left\langle\partial_t R, \psi_R \right\rangle\,  dt+ D_R\iint_{Q_T}\nabla R \cdot\nabla \psi_R\, d\mathbf{x}\, dt-\gamma\iint_{Q_T} I \,\psi_R \, d\mathbf{x}\, dt+ \eta\iint_{Q_T} R \,\psi_R \, d\mathbf{x}\, dt
				&=0,
			\end{split} 
		\end{eqnarray}   
		for all test functions $\boldsymbol{Z} \in \mathbf{L}^2(0,T; \mathbf{H})$, $\psi_C \in L^2(0,T;  H^1_0)$, $\psi_S \in L^2(0,T;  H^1_0)$, $\psi_I \in L^2(0,T;  H^1_0)$ 
		and $\psi_R \in L^2(0,T;  H^1_0)$, and
		\begin{eqnarray}\label{eqvar2}
			\begin{split}
				\boldsymbol{U}(0,\mathbf{.}) &=\boldsymbol{U}_{0},  & \hbox { in }   \Omega, \\
				C(0,\mathbf{.}) &=C_{0},  & \hbox { in }   \Omega,
				\\
				S(0,\mathbf{.}) &=S_{0},  & \hbox { in }   \Omega, \\
				I(0,\mathbf{.}) &=I_{0},  & \hbox { in }   \Omega, \\
				R(0,\mathbf{.}) &=R_{0},  & \hbox { in }   \Omega,  
			\end{split}
		\end{eqnarray}
	\end{defi}

	\subsection{Existence of weak solutions}
	
	Our first main result pertains to the existence of weak solutions to Model (\ref{SIRCNS}) and is stated as follows.
	\begin{thm}\label{theo-existence}
		Assume the assumptions {\bf(A1)}-{\bf(A3)}  hold. 
		Then the problem (\ref{SIRCNS}) has a weak solution in the sense of Definition \ref{defSol}.
	\end{thm}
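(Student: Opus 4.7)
The plan is to follow the Faedo-Galerkin scheme announced in the introduction, combined with a priori energy estimates and compactness arguments. I would fix a Galerkin basis $\{\boldsymbol{w}_i\}_{i \geq 1} \subset \mathbf{H}$ of eigenfunctions of the Stokes operator and a basis $\{\varphi_i\}_{i \geq 1} \subset H^1_0(\Omega)$ of eigenfunctions of the Dirichlet Laplacian, both $L^2$-orthonormal. For each $n \geq 1$, I would look for approximate solutions
\begin{equation*}
\boldsymbol{U}_n = \sum_{i=1}^n a_i^n(t)\,\boldsymbol{w}_i, \qquad (C_n, S_n, I_n, R_n) = \sum_{i=1}^n (b_i^n, s_i^n, j_i^n, r_i^n)(t)\,\varphi_i,
\end{equation*}
satisfying the system projected onto the corresponding finite-dimensional subspaces, with initial data given by the $L^2$-projections of $(\boldsymbol{U}_0, C_0, S_0, I_0, R_0)$. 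Since the incidence term $\beta(C)\frac{SI}{N}$ is singular on $\{N=0\}$, I would regularize it as $\beta(C)\frac{S^+ I^+}{N^+ +\varepsilon}$ with $\varepsilon>0$, which simultaneously facilitates the recovery of non-negativity. By assumption \textbf{(A1)} the resulting coupled ODE system in the coefficients is locally Lipschitz, so Cauchy-Lipschitz furnishes a local approximate solution.

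The next step is to derive uniform-in-$n$ energy estimates that prolong the approximate solutions to $[0,T]$ and underpin the limiting procedure. Testing the $\boldsymbol{U}_n$-equation with $\boldsymbol{U}_n$, using incompressibility to cancel the convective term, together with the lower bound $\nu(C_n) \geq \nu_1$ from \eqref{cond-mu}, yields a bound on $\boldsymbol{U}_n$ in $L^\infty(0,T;\mathbf{V}) \cap L^2(0,T;\mathbf{H})$. Testing the $C_n$-equation with $C_n$ and again exploiting $\operatorname{div}\boldsymbol{U}_n = 0$ to kill the advection, then applying Young's inequality \eqref{inq-Young} to the source $\alpha I_n$ and Grönwall's lemma, gives $C_n \in L^\infty(0,T;L^2) \cap L^2(0,T;H^1_0)$. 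An analogous bootstrap on $S_n$, $I_n$, $R_n$, relying on $\beta(C_n) \leq \beta_2$ and the pointwise control $S^+ I^+/(N^++\varepsilon) \leq I^+$, produces the corresponding $L^\infty L^2 \cap L^2 H^1$ bounds. Dual bounds on $\partial_t \boldsymbol{U}_n$, $\partial_t C_n$, $\partial_t S_n$, $\partial_t I_n$, $\partial_t R_n$ in $(\mathbf{H})'$ and $(H^1_0)'$ then follow by inspecting each equation term by term, the advective and reactive contributions being controlled via the Gagliardo-Nirenberg inequalities \eqref{Gagliardo-Nirenberg-U}-\eqref{Gagliardo-Nirenberg-C}.

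To pass to the limit $n \to \infty$, the previous bounds yield weak-$\ast$ convergence in $L^\infty(0,T;L^2)$ and weak convergence in $L^2(0,T;H^1)$, and Lemma \ref{lemma-Aubin-Lions} supplies strong convergence of $(\boldsymbol{U}_n, C_n, S_n, I_n, R_n)$ in $L^2(0,T;L^2(\Omega))$ and, along a subsequence, almost everywhere on $Q_T$. These modes suffice to handle the linear terms and the quadratic advective terms $(\boldsymbol{U}_n \cdot \nabla)\boldsymbol{U}_n$ and $\boldsymbol{U}_n \cdot \nabla C_n$ by the classical argument combining strong $L^2$ convergence of the first factor with weak $L^2$ convergence of the second. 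For the viscous term, the a.e. convergence of $C_n$ combined with the continuity and boundedness of $\nu$ and dominated convergence yield $\nu(C_n) \to \nu(C)$ strongly in every $L^q(Q_T)$ with $q<\infty$, which coupled with the weak $L^2$ convergence of $\nabla\boldsymbol{U}_n$ identifies the limit of $\nu(C_n)\nabla\boldsymbol{U}_n$.

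The main obstacle, as I see it, is the passage to the limit in the incidence term $\beta(C)\frac{SI}{N}$ together with the recovery of non-negativity of $C$, $S$, $I$, $R$. For the latter, I would test each scalar equation with the negative part of the unknown, exploiting the sign of the shedding, birth and recovery terms to deduce $C_n, S_n, I_n, R_n \geq 0$ a.e., signs which are preserved in the limit. For the former, I would derive all $n$-uniform estimates also uniformly in $\varepsilon$ and then let $\varepsilon \to 0$ using the dominating envelope $S^+ I^+/(N^++\varepsilon) \leq I^+$ together with the strong $L^2$ convergence of $S_n$ and $I_n$: dominated convergence identifies the limit with $\beta(C)\frac{SI}{N}$ on $\{N>0\}$, whereas on $\{N=0\}$ the bound $0 \leq SI \leq N^2$ forces the limit to vanish, closing the argument and furnishing a weak solution in the sense of Definition \ref{defSol}.
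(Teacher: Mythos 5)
Your proposal is correct in outline and follows essentially the same route as the paper: Faedo--Galerkin projection, testing with the unknowns to get $L^\infty(0,T;L^2)\cap L^2(0,T;H^1)$ bounds (with incompressibility killing the advective contributions), dual estimates on the time derivatives, Aubin--Lions compactness, and negative-part testing for non-negativity. The one genuinely different ingredient is your treatment of the singular incidence term: the paper replaces $\beta(C^n)S^nI^n/N_n$ at the Galerkin level by the positive-part truncation $S^{n,+}I^{n,+}/(C^{n,+}+S^{n,+}+I^{n,+})$ and controls it throughout via the pointwise bound $SI/N\leq\min(S,I)$, without any regularization parameter; your shift $S^{+}I^{+}/(N^{+}+\varepsilon)$ buys a reaction term that is genuinely well defined and continuous where all compartments vanish (the paper's truncated quotient is still of the form $0/0$ there, a point the paper glosses over), at the price of the extra limit $\varepsilon\to 0$, which you close correctly by dominated convergence with envelope $I^{+}$ and the observation that $S=I=0$ on $\{N=0\}$ once non-negativity is known. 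Your limit passages in the viscous term ($\nu(C_n)\to\nu(C)$ a.e.\ plus uniform boundedness, against weak convergence of $\nabla\boldsymbol{U}_n$) and in the quadratic terms are also spelled out more carefully than the paper's one-line passage to the limit. One slip to repair: under \textbf{(A1)} the functions $\nu$ and $\beta$ are only continuous and bounded --- Lipschitz continuity is assumed only later, in \textbf{(A4)}, for uniqueness --- so the Galerkin ODE system is continuous but not locally Lipschitz in the coefficients, and Cauchy--Lipschitz does not apply; you must invoke Peano's theorem or, as the paper does, Carath\'eodory's existence theory, which is harmless since uniqueness of the approximations is never used. Finally, note that testing the Galerkin equations with $-C^{n,-}$ (and its analogues) is only formal, since negative parts do not lie in $\operatorname{span}\{\ell_1,\dots,\ell_n\}$; the paper commits the same sleight of hand, and your $\varepsilon$-regularized scheme actually admits the cleaner repair of proving non-negativity for the limit function, where all of $H^1_0(\Omega)$ is available as test space.
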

	Let us briefly outline the steps of the proof of our first main result, which we will explore in details in the upcoming subsubsections:
	\begin{enumerate}
		\item \textbf{Construction of a sequence of Faedo-Galerkin solutions}. 
		In this step, we formulate a finite dimensional counterpart (of dimension $n \in \mathbb{N}^*$)  of Model (\ref{SIRCNS}), by projecting the solution into a finite-dimensional space that is constructed from an orthonormal basis. This step is concluded by establishing that  a sequence $(\boldsymbol{U}^n,C^n,S^n,I^n,R^n)$ of solutions to the finite dimensional counterpart exists.
		\item \textbf{A priori estimates of the sequence of Faedo-Galerkin solutions}.  In this part, we  establish several estimates for the sequence $(\boldsymbol{U}^n,C^n,S^n,I^n,R^n)$. These allow us to conclude that the weak solution is bounded in the appropriate chosen Banach spaces.
		\item \textbf{Passage to the limit}. In this step, we gather all the results obtained in the previous steps to let $n\rightarrow +\infty$ and recover a weak solution to the model (\ref{SIRCNS}). 
	\end{enumerate}
	
	\subsubsection{Construction of a sequence of Faedo-Galerkin solutions}
	Let, $\left\{ (\xi_k, \ell_k) \right\}_{k=1,2, \ldots} \subset \boldsymbol{H} \times H^1_0$ be an orthonormal basis of $\boldsymbol{V} \times H_0^1$ and let $\boldsymbol{\Pi}_n$ and  $\Pi_n$ denote the orthogonal projection operators from $\boldsymbol{L}^2(\Omega)$ and $L^2(\Omega)$, respectively
	(endowed with the usual inner product $\langle \cdot , \cdot \rangle$) onto the finite-dimensional subspaces
	\[
	\begin{aligned}
		\boldsymbol{X}^{n}  &:= \operatorname{span}\{\xi_1, \dots, \xi_n\},\\
		Y^n &:= \operatorname{span}\{\ell_1, \dots, \ell_n\}.
	\end{aligned}
	\]
	For every $\boldsymbol{\phi} \in \boldsymbol{L}^2(\Omega)$,  $\phi \in L^2(\Omega)$, and $n \in \mathbb{N}^*$, this projection is given by
	\[
	\boldsymbol{\Pi}_n \boldsymbol{\phi} (t,\mathbf{x}) = \sum_{k=1}^n \langle  \boldsymbol{\phi} , \xi_k \rangle(t) \xi_k\l(\mathbf{x}\r) \text{ and } \Pi_n \phi (t,\mathbf{x}) = \sum_{k=1}^n \langle \phi , \ell_k \rangle(t) \ell_k\l(\mathbf{x}\r).
	\]
	
	\medskip
	\noindent
	We now construct the \emph{Faedo--Galerkin approximations}. 
	Fix $n \in \mathbb{N}^*$ and consider an approximate solution of the form
	\[
	\boldsymbol{U}^n(t) := \sum_{k=1}^n \boldsymbol{u}_k^n(t) \, \xi_k, \; C^n(t) := \sum_{k=1}^n c_k^n(t) \, \ell_k, \;  S^n(t) := \sum_{k=1}^n d_k^n(t) \, \ell_k, \; I^n(t) := \sum_{k=1}^n e_k^n(t) \, \ell_k, \; R^n(t) := \sum_{k=1}^n f_k^n(t) \, \ell_k,
	\]
	where $\{\boldsymbol{u}_k^n\}_{k=1}^n$, $\{c_k^n\}_{k=1}^n$, $\{d_k^n\}_{k=1}^n$, $\{e_k^n\}_{k=1}^n$ and $\{f_k^n\}_{k=1}^n$ are scalar functions yet to be determined. The initial condition is projected accordingly as
	\[
	\boldsymbol{U}^n_0=\sum_{\ell=0}^{n}\langle \boldsymbol{U}_0, \xi_{\ell} \rangle\xi_\ell, \; C^n_0 := \sum_{k=1}^n \langle C_0, \ell_k \rangle \ell_k, \;  S^n_0 := \sum_{k=1}^n \langle S_0, \ell_k \rangle \ell_k, \; I^n_0 := \sum_{k=1}^n \langle I_0, \ell_k \rangle \ell_k, \; R^n_0 := \sum_{k=1}^n \langle R_0, \ell_k \rangle \ell_k.
	\]
	In addition, the second member $\boldsymbol{f}$ is approximated within the same finite-dimensional space of $\boldsymbol{U}_0^n$:
	\[
	\boldsymbol{f}^n\l(t,x\r)=\sum_{k=0}^{n}\langle\boldsymbol{f},\xi_k\rangle (t)\xi_k\l(\mathbf{x}\r). \; 
	\]
	The functions  $\{\boldsymbol{u}_k^n\}_{k=1}^n$, $\{c_k^n\}_{k=1}^n$, $\{d_k^n\}_{k=1}^n$, $\{e_k^n\}_{k=1}^n$ and $\{f_k^n\}_{k=1}^n$ are determined so that, for each 
	$k \in \{1,\dots,n\}$ and every $n \in \mathbb{N}^*$, the following system of equations is satisfied:
	\begin{equation}\label{eq:approx-eqn-integrated}
		\begin{split}
			\frac{d}{dt} \boldsymbol{U}^n(t)&=
			 \boldsymbol{\Pi}_n \left[\nabla \cdot \left(\nu(C) \nabla \boldsymbol{U} \right) \right] - \boldsymbol{\Pi}_n \left[(\boldsymbol{U}\cdot\nabla)\boldsymbol{U} \right]
			+ \boldsymbol{\Pi}_n[\boldsymbol{f}]
			,
\\
			\frac{d}{dt} C^n(t) &=
			\Pi_n \left[\nabla \cdot \left(D_C \nabla C \right) \right]- \Pi_n \left[\boldsymbol{U} \cdot  \nabla C  \right]+\Pi_n\left[\alpha I-\lambda C\right]
		,
\\
			  \frac{d}{dt} S^n(t)&=
			  \Pi_n \left[\nabla \cdot \left(D_S \nabla S \right) \right] -\Pi_n \left[ \beta(C)\frac{SI}{N} \right]
               + \Pi_n\left[\Lambda-\eta S\right]	
			,
\\ 
			 \frac{d}{dt} I^n(t)&= \Pi_n \left[\nabla \cdot \left(D_I \nabla I \right) \right] +  \Pi_n \left[ \beta(C)\frac{SI}{N} \right]
			- \Pi_n\left[(\gamma+\eta)I \right]	
			,
\\ 
			  \frac{d}{dt} R^n(t)&= \Pi_n \left[\nabla \cdot \left(D_R \nabla R \right) \right]
			  + \Pi_n\left[\gamma I-\eta R\right]	
		,
		\end{split}
	\end{equation}
	with initial data
	\begin{equation}\label{eq:initial-data-approx-eqn}
		\begin{split}
			\boldsymbol{U}^n(0)	&= \boldsymbol{U}_{0}^n,\\
			C^n(0) &= C_{0}^n,\\
			S^n(0) &=S_0^n,\\
			I^n(0) &= I_{0}^n,\\
			R^n(0) &= R_{0}^n.
		\end{split}
	\end{equation}
	Let $\mathcal{U}^n:=(\boldsymbol{U}^n,C^n,S^n,I^n,R^n)$, then we can rewrite (\ref{eq:approx-eqn-integrated})-(\ref{eq:initial-data-approx-eqn}) more explicitly as a compact system of ordinary differential equations:
	\begin{equation}\label{Cauchy-problem}
		\begin{split}
			\begin{cases}
				\frac{d}{dt}\mathcal{U}^n&=\mathcal{F}^n(\mathcal{U}),\\
				\mathcal{U}^n(0)&=\mathcal{U}_0,
			\end{cases}
		\end{split}
	\end{equation}
	where  $\mathcal{U}_0=(\boldsymbol{U}_{0}^n,C_{0}^n,S_{0}^n,I_{0}^n,R_{0}^n,)$ and 
	$$\mathcal{F}^n(\mathcal{U})
	=\begin{pmatrix}
		\boldsymbol{\Pi}_n \left[\nabla \cdot \left(\nu(C) \nabla \boldsymbol{U} \right) \right]-\boldsymbol{\Pi}_n \left[(\boldsymbol{U}\cdot\nabla)\boldsymbol{U} \right]+ 	\boldsymbol{\Pi}_n\left[\boldsymbol{f}\right]
		\\
		\Pi_n \left[\nabla \cdot \left(D_C \nabla C \right) \right]-\Pi_n \left[\boldsymbol{U}\cdot\nabla C \right]+	\Pi_n\left[\alpha I-\lambda C\right]
		\\
		\Pi_n \left[\nabla \cdot \left(D_S \nabla S \right) \right]-\Pi_n \left[ \beta(C)\frac{SI}{N} \right]+	\Pi_n\left[\Lambda -\eta S\right]
		\\
		\Pi_n \left[\nabla \cdot \left(D_I \nabla I \right) \right]+\Pi_n \left[ \beta(C)\frac{SI}{N} \right]+	\Pi_n\left[(\gamma +\eta) I\right]
		\\
		\Pi_n \left[\nabla \cdot \left(D_R \nabla R \right) \right]+	\Pi_n\left[\gamma I-\eta R\right]
	\end{pmatrix}.
	$$
\noindent
	From the assumptions on the data of the model, the functions $\mathcal{F}^n$ is Caratheodory functions. Therefore, according to the standard theory of ordinary differential equation, there exists an absolutely continuous solution $\left\{\mathcal{U}^n_{k}\right\}_{k=1}^{n}$ satisfying the above equation. Consequently, a weak local solution exists for all $t\in (0,t_0)$ with $0<t_0<T$. Moreover, $\mathcal{U}^n=(\boldsymbol{U}^n,C^n,S^n,I^n,R^n)$
	satisfies the following weak formulation:
	\begin{eqnarray}\label{eq:var-approched}
			\begin{split}
			\left\langle\partial_t\boldsymbol{U}^n, \boldsymbol{Z}\right\rangle+\int_{\Omega}	\nu(C^n) \nabla \boldsymbol{U}^n : \nabla \boldsymbol{Z}\, d\mathbf{x}+\int_{\Omega}	(\boldsymbol{U}^n\cdot\nabla)\boldsymbol{U}^n \cdot  \boldsymbol{Z}\, d\mathbf{x}\, d\mathbf{x}
	-\int_{\Omega}\boldsymbol{f}\cdot \boldsymbol{Z}\, d\mathbf{x}&=0,
	\\  
		\left\langle\partial_tC^n, \psi_C \right\rangle+ \int_{\Omega}  \boldsymbol{U}^n\cdot\nabla C^n\, \psi_C\, d\mathbf{x}+ D_C\int_{\Omega}\nabla C^n \cdot\nabla \psi_C\, d\mathbf{x}
		- \alpha\int_{\Omega} I^n\, \psi_C\, d\mathbf{x} + \lambda \int_{\Omega} C^n \,\psi_C \, d\mathbf{x}
	&=0,
	\\ 	  		
		\left\langle\partial_t S^n, \psi_S \right\rangle+ D_S\int_{\Omega}\nabla S^n \cdot\nabla \psi_S\, d\mathbf{x}+\int_{\Omega}\beta(C^n) \frac{S^n\,I^n}{N_n} \, \psi_S\, d\mathbf{x}
		-\int_{\Omega} \Lambda \,\psi_S \, d\mathbf{x} + \eta \int_{\Omega} S^n \,\psi_S \, d\mathbf{x}
	&=0,
	\\ 	  		
		\left\langle\partial_t I^n, \psi_I \right\rangle+ D_I\int_{\Omega}\nabla I^n \cdot\nabla \psi_I\, d\mathbf{x}-\int_{\Omega}\beta(C^n) \frac{S^n\,I^n}{N_n} \, \psi_I\, d\mathbf{x} + (\eta+\gamma) \int_{\Omega} I^n \,\psi_I \, d\mathbf{x}
	&=0,
	\\ 	  		
		\left\langle\partial_t R^n, \psi_R \right\rangle+ D_R\int_{\Omega}\nabla R^n \cdot\nabla \psi_R\, d\mathbf{x}-\gamma\int_{\Omega} I^n \,\psi_R \, d\mathbf{x}+ \eta\int_{\Omega} R^n \,\psi_R \, d\mathbf{x}
	&=0,
\end{split} 
	\end{eqnarray}   
	for all test functions $\boldsymbol{Z} \in D([0,T); \mathbf{H})$, $\psi_C \in D([0,T);  H^1_0)$, $\psi_S \in D([0,T);  H^1_0)$, $\psi_I \in D([0,T);  H^1_0)$ 
	and $\psi_R \in D([0,T);  H^1_0)$.
	
	\subsubsection{A priori estimates of the sequence of Faedo-Galerkin solutions}
	Now, we derive $n$-independent a priori estimates of the sequence of Faedo-Galerkin solutions $(\boldsymbol{U}^n,C^n,S^n,I^n,R^n)$. The first estimate reads:
	\begin{lem}\label{lem-1} 
		The sequence  $(\boldsymbol{U}^n)_n$  satisfies 
		\begin{itemize}
			\item[(i)]\quad $(\boldsymbol{U}^n)_n$  bounded in
			$L^2(0,T;\mathbf{H})\cap L^\infty(0,T;\mathbf{V})$.
			\item[(ii)]\quad 	$(\partial_t \boldsymbol{U}^n)_n$  uniformly bounded in $L^2(0,T; \mathbf{H}^\prime)$.
			\item[(iii)]\quad $(\boldsymbol{U}^n)_n$ relatively compact in $(L^2(Q_T))^2$.		
		\end{itemize}
	\end{lem}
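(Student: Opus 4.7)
The plan is to test the approximate momentum equation in \eqref{eq:var-approched} against $\boldsymbol{Z} = \boldsymbol{U}^n$, exploit the lower bound on $\nu$ from (A1) and the cancellation of the convective term, and then derive the dual bound on $\partial_t \boldsymbol{U}^n$ by testing against general $\boldsymbol{Z}\in \mathbf{H}$. Compactness will follow by applying the Aubin--Lions lemma recalled in Lemma \ref{lemma-Aubin-Lions}.

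\textbf{Step 1 (proof of (i)).} Taking $\boldsymbol{Z}=\boldsymbol{U}^n$ in the first equation of \eqref{eq:var-approched}, the time-derivative term gives $\tfrac{1}{2}\tfrac{d}{dt}\|\boldsymbol{U}^n\|_{\mathbf{L}^2}^2$, and the convective trilinear form vanishes because $\operatorname{div}\boldsymbol{U}^n=0$ and $\boldsymbol{U}^n|_{\partial\Omega}=0$. Assumption \eqref{cond-mu} controls the viscous term from below by $\nu_1\|\nabla\boldsymbol{U}^n\|_{\mathbf{L}^2}^2$, and the forcing term is absorbed via Cauchy--Schwarz combined with Young's inequality \eqref{inq-Young}. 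Integrating in time and using assumption (A3) together with Gronwall's inequality yields
\[
\|\boldsymbol{U}^n\|_{L^\infty(0,T;\mathbf{V})}^2 + \nu_1\int_0^T \|\nabla\boldsymbol{U}^n\|_{\mathbf{L}^2}^2\,dt \leq C,
\]
uniformly in $n$, which gives (i).

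\textbf{Step 2 (proof of (ii)).} For any $\boldsymbol{Z}\in \mathbf{H}$, the first equation of \eqref{eq:var-approched} gives
\[
|\langle \partial_t \boldsymbol{U}^n,\boldsymbol{Z}\rangle|
\leq \nu_2\|\nabla\boldsymbol{U}^n\|_{\mathbf{L}^2}\|\nabla\boldsymbol{Z}\|_{\mathbf{L}^2}
+ \|\boldsymbol{U}^n\|_{\mathbf{L}^4}\|\nabla\boldsymbol{U}^n\|_{\mathbf{L}^2}\|\boldsymbol{Z}\|_{\mathbf{L}^4}
+ \|\boldsymbol{f}\|_{\mathbf{L}^2}\|\boldsymbol{Z}\|_{\mathbf{L}^2}.
\]
Applying Gagliardo--Nirenberg \eqref{Gagliardo-Nirenberg-U} to $\boldsymbol{U}^n$ and $\boldsymbol{Z}$ bounds every right-hand side quantity by a constant times $\|\boldsymbol{Z}\|_{\mathbf{H}}$. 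In the two-dimensional setting ($\zeta=1/2$) one obtains $\|\boldsymbol{U}^n\|_{\mathbf{L}^4}^2 \leq c\|\boldsymbol{U}^n\|_{\mathbf{L}^2}\|\boldsymbol{U}^n\|_{\mathbf{H}^1}$, and hence after taking the supremum over unit-norm $\boldsymbol{Z}$ and squaring,
\[
\int_0^T \|\partial_t\boldsymbol{U}^n\|_{\mathbf{H}'}^2\,dt
\leq C\int_0^T \Bigl(\|\boldsymbol{U}^n\|_{\mathbf{H}^1}^2 + \|\boldsymbol{U}^n\|_{\mathbf{L}^2}^2\|\boldsymbol{U}^n\|_{\mathbf{H}^1}^2 + \|\boldsymbol{f}\|_{\mathbf{L}^2}^2\Bigr)dt,
\]
which is finite by Step 1 and (A3).

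\textbf{Step 3 (proof of (iii)).} I would invoke Lemma \ref{lemma-Aubin-Lions}(1) with $X_0 = \mathbf{H}$, $X = \mathbf{V}$, $X_1 = \mathbf{H}'$ and $p=2$. The embedding $\mathbf{H}\hookrightarrow\mathbf{V}$ is compact by Rellich--Kondrachov applied to the underlying $\mathbf{H}^1_0(\Omega)\Subset \mathbf{L}^2(\Omega)$, while $\mathbf{V}\hookrightarrow\mathbf{H}'$ is continuous. Combining the bounds from (i) and (ii) immediately yields relative compactness of $(\boldsymbol{U}^n)_n$ in $L^2(0,T;\mathbf{V})\hookrightarrow L^2(Q_T)^d$.

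\textbf{Main obstacle.} The delicate step is (ii): the convective term $(\boldsymbol{U}^n\cdot\nabla)\boldsymbol{U}^n$ is dimension-sensitive. The computation above closes cleanly in 2D (consistent with the space $(L^2(Q_T))^2$ in (iii)); in 3D Gagliardo--Nirenberg yields only $\|\boldsymbol{U}^n\|_{\mathbf{L}^4}^2 \leq c\|\boldsymbol{U}^n\|_{\mathbf{L}^2}^{1/2}\|\boldsymbol{U}^n\|_{\mathbf{H}^1}^{3/2}$, so the convective piece naturally lives in $L^{4/3}(0,T;\mathbf{H}')$ rather than $L^2$. One would then either restrict to $d=2$ for the stated estimate or weaken (ii) to an $L^{4/3}$-in-time bound, which still activates Lemma \ref{lemma-Aubin-Lions} and hence (iii).
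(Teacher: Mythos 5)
The paper itself does not spell out a proof of this lemma; it simply cites it as an adaptation of Lemma~3 in \cite{bendahmane2025mathematical}. Your architecture — energy estimate with cancellation of the trilinear term for (i), dual estimate for (ii), Aubin--Lions with $X_0=\mathbf{H}$, $X=\mathbf{V}$, $X_1=\mathbf{H}'$ for (iii) — is exactly the standard route such a proof takes, and Steps~1 and~3 are correct as written (including your correct reading of the paper's swapped convention, where $\mathbf{H}$ is the $\mathbf{H}^1_0$-closure and $\mathbf{V}$ the $\mathbf{L}^2$-closure). Your closing remark about dimension is also apt: with these estimates, (ii) as stated is genuinely a two-dimensional result, and in 3D the convective contribution only lands in $L^{4/3}(0,T;\mathbf{H}')$ — consistent with the exponent in the space $(L^2(Q_T))^2$ appearing in item (iii) of the lemma.

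There is, however, a concrete jump in Step~2 that does not close even in 2D as written. From your intermediate inequality, the convective contribution to the dual norm is
\begin{equation*}
\|\boldsymbol{U}^n\|_{\mathbf{L}^4}\|\nabla\boldsymbol{U}^n\|_{\mathbf{L}^2}
\leq c\,\|\boldsymbol{U}^n\|_{\mathbf{L}^2}^{1/2}\|\boldsymbol{U}^n\|_{\mathbf{H}^1}^{3/2},
\end{equation*}
whose square is $\|\boldsymbol{U}^n\|_{\mathbf{L}^2}\|\boldsymbol{U}^n\|_{\mathbf{H}^1}^{3}$; under the energy bounds of Step~1 this is only in $L^{2/3}(0,T)$, not $L^1(0,T)$, so your final displayed estimate containing $\|\boldsymbol{U}^n\|_{\mathbf{L}^2}^{2}\|\boldsymbol{U}^n\|_{\mathbf{H}^1}^{2}$ does not follow from the line preceding it. The standard repair is to exploit skew-symmetry before estimating: since $\operatorname{div}\boldsymbol{U}^n=0$ and $\boldsymbol{U}^n$ vanishes on $\partial\Omega$,
\begin{equation*}
\int_{\Omega}(\boldsymbol{U}^n\cdot\nabla)\boldsymbol{U}^n\cdot\boldsymbol{Z}\,d\mathbf{x}
=-\int_{\Omega}(\boldsymbol{U}^n\cdot\nabla)\boldsymbol{Z}\cdot\boldsymbol{U}^n\,d\mathbf{x},
\qquad
\Bigl|\int_{\Omega}(\boldsymbol{U}^n\cdot\nabla)\boldsymbol{Z}\cdot\boldsymbol{U}^n\,d\mathbf{x}\Bigr|
\leq \|\boldsymbol{U}^n\|_{\mathbf{L}^4}^{2}\,\|\nabla\boldsymbol{Z}\|_{\mathbf{L}^2},
\end{equation*}
and then the 2D Gagliardo--Nirenberg inequality \eqref{Gagliardo-Nirenberg-U} gives $\|\boldsymbol{U}^n\|_{\mathbf{L}^4}^{2}\leq c\,\|\boldsymbol{U}^n\|_{\mathbf{L}^2}\|\boldsymbol{U}^n\|_{\mathbf{H}^1}$, which \emph{is} square-integrable in time by (i), yielding precisely your final display. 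A second, smaller omission: in the Galerkin setting one cannot test $\partial_t\boldsymbol{U}^n$ against arbitrary $\boldsymbol{Z}\in\mathbf{H}$ directly; one has $\langle\partial_t\boldsymbol{U}^n,\boldsymbol{Z}\rangle=\langle\partial_t\boldsymbol{U}^n,\boldsymbol{\Pi}_n\boldsymbol{Z}\rangle$, so the estimate requires $\|\boldsymbol{\Pi}_n\boldsymbol{Z}\|_{\mathbf{H}}\leq c\,\|\boldsymbol{Z}\|_{\mathbf{H}}$ uniformly in $n$, which holds for a suitable basis (e.g., Stokes eigenfunctions, so that $\boldsymbol{\Pi}_n$ is orthogonal in both $\mathbf{V}$ and $\mathbf{H}$). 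Both points are standard and fixable, but as submitted, Step~2 asserts an inequality its own preceding line does not deliver.
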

	\begin{proof}
	The proof is an adaptation of the one presented in \cite[Lemma 3]{bendahmane2025mathematical}, and so we omit it here for brevity.
	\end{proof}
	
	Hereinafter, we use $c$, $c_1$, $c_2$, $c_3$, $\ldots$ to denote unspecified positive constants that may change from line to line but do not depend on $n$.
    For the sequence $(C^n,S^n,I^n,R^n)$, we derive the following lemma:
	\begin{lem}\label{lem-2}
		If $C_0$, $S_0$, $I_0$, $R_0$ are nonnegative,  then the sequence $(C^n,S^n,I^n,R^n)$ is
		\begin{itemize}
			\item[(i)] nonnegative;
			\item[(ii)] bounded in
			$(L^2(0,T;H^1(\Omega))\cap L^\infty(0,T;L^2(\Omega)))^4$;
			\item[(iii)] relatively compact in $(L^2(Q_T))^4$.
		\end{itemize}
	\end{lem}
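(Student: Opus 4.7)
The proof proceeds in three steps, one per claim, each building on the previous.

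For the nonnegativity (i), since $\beta(C)SI/N$ is singular at $N=0$, I first work at the Galerkin level with a regularized right-hand side, replacing $\beta(C^n)S^nI^n/N_n$ by $\beta(C^n)(S^n)^+(I^n)^+/(N_n^++\varepsilon)$. This furnishes a globally Lipschitz, Caratheodory vector field in (\ref{Cauchy-problem}) and thereby global-in-time solvability on $[0,T]$. A Stampacchia-type argument — testing each scalar equation against (the $L^2$-projection of) the negative part of its own unknown and using the signs $\alpha I^n\geq 0$, $\Lambda\geq 0$, $\gamma I^n\geq 0$ together with the vanishing of the regularized incidence on $\{(S^n)^-\neq 0\}\cup\{(I^n)^-\neq 0\}$ — yields, via Gronwall, that $\|(C^n)^-(t)\|^2+\|(S^n)^-(t)\|^2+\|(I^n)^-(t)\|^2+\|(R^n)^-(t)\|^2=0$ for $t\in[0,T]$ under the nonnegative initial data. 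The regularization is thus transparent, the pointwise bound $S^n/N_n\leq 1$ holds on $\{N_n>0\}$, and the original nonlinearity is recovered.

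For the energy estimates (ii), I test each scalar Galerkin equation against its own unknown. In the $C^n$ equation, the convective term $\int(\boldsymbol{U}^n\cdot\nabla C^n)C^n\,d\mathbf{x}$ vanishes after integration by parts because $\operatorname{div}\boldsymbol{U}^n=0$ and $\boldsymbol{U}^n|_{\partial\Omega}=0$. In the $S^n$ and $I^n$ equations, positivity from (i), the bound $S^n/N_n\leq 1$, and the boundedness of $\beta$ from (A1) reduce the incidence to a source at most linear in $I^n$ (respectively $S^n$). Applying Young's inequality (\ref{inq-Young}) to the remaining cross-terms $\alpha\int I^n C^n$, $\int\Lambda S^n$, $\gamma\int I^n R^n$ and discarding the nonpositive dissipative terms $-\lambda\|C^n\|^2$, $-\eta(\|S^n\|^2+\|I^n\|^2+\|R^n\|^2)$, $-\gamma\|I^n\|^2$, I sum the four resulting identities and invoke Gronwall to obtain the claimed uniform bound in $L^\infty(0,T;L^2(\Omega))\cap L^2(0,T;H^1(\Omega))$ for each component. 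This simultaneously globalizes the Caratheodory solution from $[0,t_0)$ to the whole interval $[0,T]$.

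For the compactness (iii), I invoke the Aubin-Lions Lemma \ref{lemma-Aubin-Lions}. From (ii) together with the compact embedding $H^1(\Omega)\Subset L^2(\Omega)$, it suffices to control the time derivatives in a weaker negative-order Sobolev space. For $\psi\in H^1_0$, the advective term in the $C^n$ equation is rewritten, via integration by parts, as $-\int C^n\boldsymbol{U}^n\cdot\nabla\psi\,d\mathbf{x}$ and bounded by $\|C^n\|_{L^4}\|\boldsymbol{U}^n\|_{L^4}\|\nabla\psi\|_{L^2}$; the Gagliardo-Nirenberg inequalities (\ref{Gagliardo-Nirenberg-U})-(\ref{Gagliardo-Nirenberg-C}), Lemma \ref{lem-1} for $\boldsymbol{U}^n$, and (ii) for $C^n$ then furnish a uniform bound for $\partial_t C^n$ in a suitable Bochner space $L^r(0,T;(H^1_0)')$. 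The $S^n, I^n, R^n$ equations lack the fluid advection and the nonlinear terms are already controlled linearly by (i)-(ii), so the same strategy delivers uniform $L^2(0,T;(H^1_0)')$ bounds for their time derivatives. Aubin-Lions then yields the desired relative compactness in $L^2(Q_T)^4$. The main obstacle lies in step (i): making Caratheodory theory and the Stampacchia-type positivity argument compatible with the singular, non-Lipschitz incidence demands the regularization above, and one must verify that the pointwise bound $S^n/N_n\leq 1$ used throughout (ii) genuinely persists in the limit $\varepsilon\to 0$.
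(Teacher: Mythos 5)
Your proposal follows essentially the same route as the paper: positive-part truncation of the incidence at the Galerkin level followed by testing with negative parts and Gronwall for (i), energy estimates exploiting the divergence-free cancellation of the advective term and the bound $\beta(C^n)S^nI^n/N_n\leq \beta_2\min(S^n,I^n)$ for (ii), and dual estimates on the time derivatives combined with the Aubin--Lions lemma for (iii). Your two deviations are harmless refinements rather than a different approach: the $\varepsilon$-regularization of the denominator (the paper instead invokes Carath\'eodory theory directly, the truncated quotient being continuous and bounded though not Lipschitz), and the weaker $L^r(0,T;(H^1_0)')$ bound for $\partial_t C^n$ obtained after integrating the advective term by parts, which still suffices for Aubin--Lions and is in fact more careful on time integrability than the paper's claimed $L^2$ bound.
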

	\begin{proof}
We divide the proof into several parts.

\textit{Proof of $(i)$.} 
The key idea is to replace the term $ \beta(C^n) \dfrac{S^n I^n}{C^n+S^n+I^n}
$
with
$
\dfrac{S^{n,+} I^{n,+}}{C^{n,+}+S^{n,+}+I^{n,+}},
$, the term $\alpha I^{n}$ with $\alpha I^{n,+}$; and the term $\gamma  I^{n}$ with $\gamma  I^{n,+}$
where $u^{+}$ stands for the positive part of a given function $u$. Note that these modifications does not alter the existence of the sequence of Faedo-Galerkin solutions as the same previous theoretical techniques can be used. For simplicity, we keep the same notation for this corresponding sequence. That is,  $(\bold{U}^n,C^n,S^n,I^n,R^n)$. 

\noindent
Now, we consider the particular choices of test functions in (\ref {eq:var-approched}) as follows:

$$\psi_C=-C^{n,-},\;  \psi_S=-S^{n,-},\; \psi_I=-I^{n,-}\; \text{ and } \psi_R=-R^{n,-},$$
	where $u^-$ stands for the negative part of a given function $u$.

\noindent
Indeed, by a direct computation we obtain		
		\begin{eqnarray}\label{post-1}
			\begin{array}{ll}
				&\displaystyle\frac{1}{2}\frac{d}{\,dt}\int_\Omega \abs{C^{n,-}}^2  \, d\mathbf{x}
				- \int_\Omega \boldsymbol{U}^n \cdot\nabla C^n \, C^{n,-}  \, d\mathbf{x}
				+D_C\int_\Omega \abs{\nabla C^{n,-}}^2 \, d\mathbf{x}+\lambda\int_{\Omega} \abs{C^{n,-}}^2\, d\mathbf{x}\\
				& \displaystyle\qquad +\alpha\int_\Omega I^{n,+} C^{n,-}  \, d\mathbf{x}=0,
			\end{array}
		\end{eqnarray} 
		\begin{eqnarray}\label{post-2}
			\begin{array}{ll}\displaystyle
				\displaystyle\frac{1}{2}\frac{d}{\,dt}\int_\Omega \abs{S^{n,-}}^2  \, d\mathbf{x}
				&+\displaystyle D_S\int_\Omega \abs{\nabla S^{n,-}}^2 \, d\mathbf{x}+ \displaystyle \eta\int_\Omega |S^{n,-}|^2\, d\mathbf{x}+\int_\Omega \Lambda\, S^{n,-}\, d\mathbf{x}\\
				&\displaystyle=-\int_\Omega \beta(C^n) \frac{S^{n,+} I^{n,+}}{C^{n,+}+S^{n,+}+I^{n,+}} S^{n,-}  \, d\mathbf{x},
		\end{array}\end{eqnarray} 
		\begin{eqnarray}\label{post-3}
			\begin{array}{ll}
				&\displaystyle\frac{1}{2}\frac{d}{\,dt}\int_\Omega \abs{I^{n,-}}^2  \, d\mathbf{x}
				
				+D_I\int_\Omega \abs{\nabla I^{n,-}}^2 \, d\mathbf{x}-(\eta+\gamma)\int_\Omega \abs{I^{n,-}}^2  \, d\mathbf{x}\\
				& \displaystyle\qquad \qquad \qquad \qquad=\int_\Omega \beta(C^n) \frac{S^{n,+} I^{n,+}}{C^{n,+}+S^{n,+}+I^{n,+}} I^{n,-}  \, d\mathbf{x},
		\end{array}\end{eqnarray}
	and
			\begin{eqnarray}\label{post-4}
		\begin{array}{ll}
			&\displaystyle\frac{1}{2}\frac{d}{\,dt}\int_\Omega \abs{R^{n,-}}^2  \, d\mathbf{x}
			+D_R\int_\Omega \abs{\nabla R^{n,-}}^2 \, d\mathbf{x}-\eta\int_{\Omega} \abs{R^{n,-}}^2\, d\mathbf{x}\\
			& \displaystyle\qquad +\gamma\int_\Omega I^{n,+} R^{n,-}  \, d\mathbf{x}=0.
		\end{array}
	\end{eqnarray} 
	On the other hand, we have
$$\operatorname{div}\boldsymbol{U}^n=0 \text{ in } Q_T$$ 
and 
$$\boldsymbol{U}^n=0 \text{ on } \Sigma_T.$$ 
Hence
$$
\int_\Omega \boldsymbol{U}^n \cdot\nabla C^n\, C^{n,-}  \, d\mathbf{x}
=\frac{1}{2}\int_\Omega \boldsymbol{U}^n \cdot \nabla  \abs{C^n}^2\, d\mathbf{x}
=-\frac{1}{2}\int_\Omega \div \boldsymbol{U}^n \, \abs{C^n}^2\, d\mathbf{x}
+\frac{1}{2}\int_{\partial \Omega}\abs{C^n}^2  \boldsymbol{U}^n \cdot \boldsymbol{\eta} d\sigma=0.
$$
Now, recall that for a given function $u$, it holds that
		$$
		\operatorname{supp} \cap \operatorname{supp}{u^-}=\emptyset,
		$$
		where $\operatorname{supp}{u^+}$ denotes the support of a given function $u$.
		
		\noindent
		Hence, 
		$$\displaystyle\frac{1}{2}\frac{d}{\,dt}\int_\Omega \abs{C^{n,-}}^2  \, d\mathbf{x} \leq 0,\; \displaystyle\frac{1}{2}\frac{d}{\,dt}\int_\Omega \abs{S^{n,-}}^2  \, d\mathbf{x} \leq 0;\; \displaystyle\frac{1}{2}\frac{d}{\,dt}\int_\Omega \abs{I^{n,-}}^2  \, d\mathbf{x} \leq 0, \; \text{and } \displaystyle\frac{1}{2}\frac{d}{\,dt}\int_\Omega \abs{R^{n,-}}^2  \, d\mathbf{x} \leq 0.$$
		Since the data $C_0$, $S_0$, $I_0$ and $R_0$, is nonnegative, we deduce that $$C^{n,-}=0,\;  S^{n,-}=0,\; I^{n,-}=0,\; \text{and } R^{n,-}=0 \; \; \text{in } Q_T.$$

		\textit{Proof of $(ii)$.}  
		Let $\psi_C=C^n$, $\psi_S=S^n$, $\psi_I=I^n$, $\psi_R=R^n$. From (\ref {eq:var-approched}), we obtain by using Green's formula and Cauchy-Schwarz inequality that:
		\begin{equation}\label{est-fluid1}
			\begin{array}{l}
				\displaystyle	\frac{1}{2} \frac{d}{dt}\norm{C^n(t)}_{L^2(\Omega)}^2 +D_C    \int_\Omega | \nabla C^n|^2 \, d\mathbf{x}  +\lambda \norm{C^n(t)}_{L^2(\Omega)}^2 +  \int_{\Omega} \boldsymbol{U}^n\cdot \nabla C^n \, C^n  \, d\mathbf{x}  
				\leq
				\alpha \Big|\int_{\Omega} I^n C^n  \, d\mathbf{x}\Big| \\ \\
				\displaystyle	\frac{1}{2} \frac{d}{dt}\norm{S^n(t)}_{L^2(\Omega)}^2 +D_S    \int_\Omega | \nabla S^n|^2 \, d\mathbf{x} +\eta \norm{S^n(t)}_{L^2(\Omega)}^2 +  \int_{\Omega} \Lambda C^n  \, d\mathbf{x}  
				\leq 
				\Big| \int_{\Omega} \beta(C^n) \frac{S^n I^n }{N_n} S^n\, d\mathbf{x} \Big|,\\ \\
				\displaystyle	\frac{1}{2} \frac{d}{dt}\norm{I^n(t)}_{L^2(\Omega)}^2 +D_I    \int_\Omega | \nabla I^n|^2 \, d\mathbf{x}  +(\gamma+\eta) \norm{I^n(t)}_{L^2(\Omega)}^2  \leq \Big| \int_{\Omega} \beta(C^n) \frac{S^n I^n }{N_n}I^n \, d\mathbf{x} \Big|,\\ \\
				\displaystyle	\frac{1}{2} \frac{d}{dt}\norm{R^n(t)}_{L^2(\Omega)}^2 +D_R    \int_\Omega | \nabla R^n|^2 \, d\mathbf{x}  +\eta \norm{R^n(t)}_{L^2(\Omega)}^2  \leq \Big| \int_{\Omega}  I^n R^n \, d\mathbf{x} \Big|.
			\end{array}
		\end{equation}
		Observe that, since $\operatorname{div} U_n=0$ in $\Omega_T$ and $U_n=0$ on $\Sigma_T$, we get
		\begin{equation}\label{galc}
\displaystyle \int_\Omega  \boldsymbol{U}^n\cdot\nabla C^n \, C^n \, d\mathbf{x}= \frac{1}{2}\int_\Omega\boldsymbol{U}^n\nabla ((C^n)^2) \, d\mathbf{x}=-\frac{1}{2}\int_\Omega\operatorname{div}(\boldsymbol{U}^n) \, (C^n)^2 \, d\mathbf{x}+\frac{1}{2}\int_{\partial\Omega} \boldsymbol{U}^n\cdot\boldsymbol{n}\,(C^n)^2\,ds=0.
		\end{equation}
		Using this and by virtue of Poincaré inequality, we deduce from \eqref{est-fluid1} that
		\begin{equation}\label{1} 
			\begin{array}{rl}
				\displaystyle  \frac{1}{2} \frac{d}{dt}\norm{C^n(t)}_{L^2(\Omega)}  + \displaystyle  c_1     \norm{C^n}_{H^{1}}^2  +\lambda    \norm{C^n(t)}_{L^2(\Omega)}^2
				 &\displaystyle  \leq  \alpha  \int_{\Omega} I^nC^n  \, d\mathbf{x} \\
				 &\displaystyle  \leq   
				  \alpha \norm{I^n(t)}_{L^2(\Omega)}\norm{C^n(t)}_{L^2(\Omega)}
				 \\ 
				& \displaystyle  \leq c_2  \norm{I^n(t)}_{L^2(\Omega)}^2 + c_3  \norm{C^n(t)}_{L^2(\Omega)}^2.
			\end{array}
		\end{equation}
		Additionally, we derive
		\begin{equation}\label{2} 
			\begin{array}{rl}	
				\displaystyle	\frac{1}{2} \frac{d}{dt}\norm{S^n(t)}_{L^2(\Omega)}^2 +c_4     \norm{C^n}_{H^{1}}^2 
				&	\leq \displaystyle
				\Big| \int_{\Omega} \beta(C^n) \frac{S^n I^n }{N_n} S^n\, d\mathbf{x} \Big|
				\\\\
				&	\leq \displaystyle\beta_2  \int_{\Omega} \min(S^n,I^n) S^n\, d\mathbf{x}
				\\\\
				&	\leq \displaystyle\beta_2  \int_{\Omega} (S^n)^2\, d\mathbf{x}=\beta_2 \norm{S^n}_{L^2(\Omega)}^2,
			\end{array}
		\end{equation}
		
		\begin{equation}\label{3} 
			\begin{array}{rl}	
				\displaystyle	\frac{1}{2} \frac{d}{dt}\norm{I^n(t)}_{L^2(\Omega)}^2 +c_5     \norm{C^n}_{H^{1}}^2 
				&	\leq \displaystyle
				\Big| \int_{\Omega} \beta(C^n) \frac{S^n I^n }{N_n} I^n\, d\mathbf{x} \Big|
				\\\\
				&	\leq \displaystyle\beta_2  \int_{\Omega} \min(S^n,I^n) I^n\, d\mathbf{x}
				\\\\
				&	\leq \displaystyle\beta_2  \int_{\Omega} (I^n)^2\, d\mathbf{x}=\beta_2 \norm{I^n}_{L^2(\Omega)}^2,
			\end{array}
		\end{equation}
		and
		\begin{equation}\label{4} 
			\begin{array}{rl}	
				\displaystyle	\frac{1}{2} \frac{d}{dt}\norm{R^n(t)}_{L^2(\Omega)}^2 +c_6     \norm{R^n}_{H^{1}}^2 
				&	\leq \displaystyle
				\Big| \int_{\Omega} R^n I^n\, d\mathbf{x} \Big|
				\\\\
				&	\leq \displaystyle \norm{R^n}_{L^2(\Omega)}\norm{I^n}_{L^2(\Omega)}
				\\\\
				&	\leq \displaystyle\frac{1}{2} \norm{I^n}_{L^2(\Omega)}^2 +\frac{1}{2} \norm{R^n}_{L^2(\Omega)}^2.
			\end{array}
		\end{equation}
		\noindent
		Adding up the equations (\ref{1}), (\ref{2}), (\ref{3}) and (\ref{4}) gives
		\begin{equation}\label{5} 
			\begin{array}{rl}	
				\displaystyle \frac{d}{dt}\Big(\norm{C^n(t)}_{L^2(\Omega)}^2+\norm{S^n(t)}_{L^2(\Omega)}^2&+\norm{I^n(t)}_{L^2(\Omega)}^2+\norm{R^n(t)}_{L^2(\Omega)}^2\Big) +c_7     \Big(\norm{C^n}_{H^{1}}^2 +\norm{S^n}_{H^{1}}^2 \\
				&+\norm{I^n}_{H^{1}}^2 +\norm{R^n}_{H^{1}}^2  \Big)	
				\\
				&\leq \displaystyle c_8 \Big(\norm{C^n}_{L^2(\Omega)}^2+\norm{S^n}_{L^2(\Omega)}^2+\norm{I^n}_{L^2(\Omega)}^2 + \norm{R^n}_{L^2(\Omega)}^2\Big).
			\end{array}
		\end{equation}
		An application of Gronwall's inequality, we get that there exist a constant $c_9>0$ independent of $n$ such that
		\begin{equation}\label{6}
			\max_{0<\tau\leq T}    \int_{\Omega} |C^n(\tau,x)|^2 \, d\mathbf{x}+ \max_{0<\tau\leq T}    \int_{\Omega} |S^n(\tau,x)|^2 \, d\mathbf{x}+ \max_{0<\tau\leq T}    \int_{\Omega} |I^n(\tau,x)|^2 \, d\mathbf{x}+ \max_{0<\tau\leq T}    \int_{\Omega} |R^n(\tau,x)|^2 \, d\mathbf{x} \leq c_9 
			,
		\end{equation}
		Using this and integrating (\ref{5}) over $(0,T)$ we get that
		\begin{equation}\label{7}
			\int_{0}^T \norm{C^n}_{H^{1}}^2\,dt +\int_{0}^T\norm{S^n}_{H^{1}}^2\,dt +\int_{0}^T\norm{I^n}_{H^{1}}^2\,dt +\int_{0}^T\norm{R^n}_{H^{1}}^2\,dt \leq c_{10}
			,
		\end{equation}
		for some constant $c_{10}>0$ independent of $n$.

		\noindent
		Thus,  we deduce that $(C^n,S^n, I^n,R^n)$ is uniformly bounded in $L^\infty\big(0,T; L^2(\Omega) \big)  \cap L^2\big(0,T; H^1(\Omega) \big)$.
		
		\textit{Proof of $(iii)$.} First, to prove $(iii)$, we first show that $\partial_tC^n$, $\partial_tS^n$, $\partial_tI^n$ and  $\partial_tR^n$ are bounded in $L^2(0,T; W^{-1,2}(\Omega))$, i.e., there exist a constant positive $c_{11}$ such that:
	\begin{equation}
		{\norm{\partial_tC^n}}_{W^{-1,2}(\Omega)}^2+{\norm{\partial_tS^n}}_{W^{-1,2}(\Omega)}^2+{\norm{\partial_tI^n}}_{W^{-1,2}(\Omega)}^2+{\norm{\partial_tR^n}}_{W^{-1,2}(\Omega)}^2\leq c_{11}. 
	\end{equation}

\noindent To this end, let $(\psi_C, \psi_S, \psi_I, \psi_R)\in H^1(\Omega)$ then, we obtain
	\begin{eqnarray}\label{eq:1}
	\begin{split}
		\left|\langle\partial_tC^n, \psi_C \rangle\right|&\leq  \left|\int_{\Omega}  \boldsymbol{U}^n\cdot\nabla C^n\, \psi_C\, d\mathbf{x}\right|+ D_C\left|\int_{\Omega}\nabla C^n \cdot\nabla \psi_C\, d\mathbf{x}\right| + 
		 \alpha\left|\int_{\Omega} I^n\, \psi_C\, d\mathbf{x} \right|+ \lambda \left|\int_{\Omega} C^n \,\psi_C \, d\mathbf{x}\right|
		 \\
		&\leq {\norm{\boldsymbol{U}^n}}_{L^4(\Omega)} {\norm{\nabla C^n}}_{L^2(\Omega)} {\norm{\psi_C}}_{L^4(\Omega)}+D_C {\norm{\nabla C^n}}_{L^2(\Omega)} {\norm{\nabla \psi_C}}_{L^2(\Omega)}\\&+\alpha {\norm{I^n}}_{L^2(\Omega)} {\norm{\psi_C}}_{L^2(\Omega)}+ \lambda{\norm{ C^n}}_{L^2(\Omega)} {\norm{\psi_C}}_{L^2(\Omega)}
		 \\
		&\leq c_1^* \Big({\norm{\boldsymbol{U}^n}}_{L^4(\Omega)} {\norm{\nabla C^n}}_{L^2(\Omega)}+ {\norm{\nabla C^n}}_{L^2(\Omega)} 
		+ {\norm{I^n}}_{L^2(\Omega)} + {\norm{ C^n}}_{L^2(\Omega)}\Big) {\norm{\psi_C}}_{H^1(\Omega)}.
		\end{split} 
\end{eqnarray}
Similarly,
	\begin{eqnarray}\label{eq:2}
		\begin{split} 	  		
		\left|\langle\partial_t S^n, \psi_S \rangle\right|&\leq D_S\left|\int_{\Omega}\nabla S^n \cdot\nabla \psi_S\, d\mathbf{x}\right|+\left|\int_{\Omega}\beta(C^n) \frac{S^n\,I^n}{N_n} \, \psi_S\, d\mathbf{x}\right|
		+\left|\int_{\Omega} \Lambda \,\psi_S \, d\mathbf{x} \right|+ \eta \left|\int_{\Omega} S^n \,\psi_S \, d\mathbf{x}\right|
		\\
		&\leq  D_S\int_{\Omega}\left|\nabla S^n \cdot\nabla \psi_S\right|\, d\mathbf{x}+\beta_2 \int_{\Omega}\left| \frac{S^n\,I^n}{N_n}\right| \, \left|\psi_S\right|\, d\mathbf{x}
		+\left|\int_{\Omega} \Lambda \,\psi_S \, d\mathbf{x} \right|+ \eta \left|\int_{\Omega} S^n \,\psi_S \, d\mathbf{x}\right|
	 \\
	&\leq D_S {\norm{\nabla S^n}}_{L^2(\Omega)} {\norm{\nabla \psi_S}}_{L^2(\Omega)}+ \beta_2 {\norm{S^n}}_{L^2(\Omega)} {\norm{\psi_S}}_{L^2(\Omega)} +  {\norm{\Lambda}}_{L^2(\Omega)} {\norm{\psi_S}}_{L^2(\Omega)}\\
	&+\eta {\norm{S^n}}_{L^2(\Omega)}{\norm{\psi_S}}_{L^2(\Omega)} \\
	&\leq c_2^* \Big({\norm{\nabla S^n}}_{L^2(\Omega)} + {\norm{S^n}}_{L^2(\Omega)} +  {\norm{\Lambda}}_{L^2(\Omega)}  +{\norm{S^n}}_{L^2(\Omega)} \Big){\norm{\psi_S}}_{H^1(\Omega)}.
	\end{split} 
\end{eqnarray}
Additionally, we acquire that
\begin{eqnarray}\label{eq:3}
	\begin{split} 	  		
		\left|\langle\partial_t I^n, \psi_I\rangle\right|&\leq  D_I\left|\int_{\Omega}\nabla I^n \cdot\nabla \psi_I\, d\mathbf{x}\right|+\left|\int_{\Omega}\beta(C^n) \frac{S^n\,I^n}{N_n} \, \psi_I\, d\mathbf{x} \right|+ (\eta+\gamma) \left|\int_{\Omega} I^n \,\psi_I \, d\mathbf{x}\right|
	 \\
	&\leq c_3^* \Big({\norm{\nabla I^n}}_{L^2(\Omega)} + {\norm{I^n}}_{L^2(\Omega)} +{\norm{S^n}}_{L^2(\Omega)} \Big){\norm{\psi_I}}_{H^1(\Omega)}
		\end{split} 
\end{eqnarray}
and
\begin{eqnarray}\label{eq:4}
	\begin{split} 	  		
	\left|\langle\partial_t R^n, \psi_R\rangle\right|&\leq  D_R\left|\int_{\Omega}\nabla R^n \cdot\nabla \psi_R\, d\mathbf{x}\right|+\gamma \left|\int_{\Omega} I^n \,\psi_R \, d\mathbf{x}\right| +\eta \left|\int_{\Omega} R^n \,\psi_R \, d\mathbf{x}\right|
\\
&\leq c_4^* \Big({\norm{\nabla R^n}}_{L^2(\Omega)} + {\norm{I^n}}_{L^2(\Omega)} +{\norm{R^n}}_{L^2(\Omega)} \Big){\norm{\psi_R}}_{H^1(\Omega)}.
	\end{split} 
	\end{eqnarray}
	Since $\boldsymbol{U}^n \in L^2(0,T;\boldsymbol{H}^1(\Omega))$,  $C^n\in L^2(0,T;H^1(\Omega))\cap L^{\infty}(0,T;L^2(\Omega))$,  $S^n\in L^2(0,T;H^1(\Omega))\cap L^{\infty}(0,T;L^2(\Omega))$,  $I^n\in L^2(0,T;H^1(\Omega))\cap L^{\infty}(0,T;L^2(\Omega))$ and $R^n\in L^2(0,T;H^1(\Omega))\cap L^{\infty}(0,T;L^2(\Omega))$, then we  deduce there exists a positive constant $c_5$ such that
\begin{eqnarray}\label{eq:5}
	\begin{split}
	&\sup_{\norm{\psi_C}_{H^1(\Omega)}=1}\left|\langle\partial_t S^n, \psi_C\rangle\right|^2+	\sup_{\norm{\psi_S}_{H^1(\Omega)}=1}\left|\langle\partial_t S^n, \psi_S\rangle\right|^2\\
	&\qquad +	\sup_{\norm{\psi_I}_{H^1(\Omega)}=1}\left|\langle\partial_t I^n, \psi_I\rangle\right|^2+	\sup_{\norm{\psi_R}_{H^1(\Omega)}=1}\left|\langle\partial_t R^n, \psi_R\rangle\right|^2\leq c_5.
\end{split} 
\end{eqnarray}
 From (\ref{eq:5}) and using Aubin-Lions compactness lemma, we conclude the proof of Lemma \ref{lem-2}.
	\end{proof}
	
	\subsubsection{Passage to the limit in Faedo-Galerkin solutions}
	From Lemma \ref{lem-1} and Lemma \ref{lem-2}, taking subsequences if necessary,  there exist functions $C, S, I,R \in L^2(0,T; H^1(\Omega))$ and $\boldsymbol{U} \in L^2(0,T; \boldsymbol{H}^1(\Omega))$ such that the following convergence results:
	\begin{equation}\label{limit1}
	\begin{split} 
		\boldsymbol{U}^n &\to \boldsymbol{U}\text{ weakly in }L^2(0,T; \boldsymbol{H}^1(\Omega)), \\
		\boldsymbol{U}^n &\to \boldsymbol{U}\text{ strongly in }L^2(0,T; \boldsymbol{L}^2(\Omega)),\\
		 C^n &\to C \text{ weakly in }L^2(0,T; H^1(\Omega)), \\
		 C^n &\to C\text{ strongly in }L^2(0,T; L^2(\Omega)), \\
		 S^n &\to S\text{ weakly in }L^2(0,T; H^1(\Omega)), \\
	    S^n &\to S\text{ strongly in }L^2(0,T; L^2(\Omega)),\\
	    I^n &\to I\text{ weakly in }L^2(0,T; H^1(\Omega)), \\
	    I^n &\to I\text{ strongly in }L^2(0,T; L^2(\Omega)),\\
	    R^n &\to R\text{ weakly in }L^2(0,T; H^1(\Omega)), \\
	    R^n &\to R\text{ strongly in }L^2(0,T; L^2(\Omega)),
	    .
	\end{split}
\end{equation}
Thus, we can pass immediately to the limit in the weak approximate formulation (\ref{eq:var-approched}) as $n\longrightarrow \infty$ to obtain the result of Theorem \ref{theo-existence}.
\subsection{Uniqueness of weak solutions in the two-dimensional case $(d=2)$}
In order to achieve the uniqueness of weak solution in the two dimensional case, we additionally impose the following condition on the nonlinear coupling coefficients:
\begin{enumerate}
	\item[\textbf{(A4).}]  The functions $\beta$ and $\nu$ are globally Lipschitz.
\end{enumerate}
Consequently, we derive the following uniqueness result:
\begin{thm}
		The weak solution of (\ref{SIRCNS}) is unique.
\end{thm}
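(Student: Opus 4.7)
The plan is a standard Gronwall-type energy argument on the difference of two hypothetical weak solutions, exploiting the restriction $d=2$ through the Ladyzhenskaya/Gagliardo--Nirenberg inequalities (\ref{Gagliardo-Nirenberg-U})--(\ref{Gagliardo-Nirenberg-C}) with $\zeta=1/2$, combined with the Lipschitz assumption (A4). Let $(\boldsymbol{U}_i, C_i, S_i, I_i, R_i)$, $i=1,2$, be two weak solutions sharing the same initial data and external force. Denote the differences by $\boldsymbol{U}:=\boldsymbol{U}_1-\boldsymbol{U}_2$, $C:=C_1-C_2$, $S:=S_1-S_2$, $I:=I_1-I_2$, $R:=R_1-R_2$. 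Subtracting the weak formulations pairwise yields evolution equations for the differences, in which the nonlinear/coupling contributions are recast through the usual algebraic splittings
\begin{equation*}
\nu(C_1)\nabla\boldsymbol{U}_1-\nu(C_2)\nabla\boldsymbol{U}_2 \;=\; \nu(C_1)\nabla\boldsymbol{U} + \bigl(\nu(C_1)-\nu(C_2)\bigr)\nabla\boldsymbol{U}_2,
\end{equation*}
\begin{equation*}
(\boldsymbol{U}_1\cdot\nabla)\boldsymbol{U}_1-(\boldsymbol{U}_2\cdot\nabla)\boldsymbol{U}_2 \;=\; (\boldsymbol{U}\cdot\nabla)\boldsymbol{U}_1+(\boldsymbol{U}_2\cdot\nabla)\boldsymbol{U},
\end{equation*}
together with the analogous decomposition of $\boldsymbol{U}_1\cdot\nabla C_1-\boldsymbol{U}_2\cdot\nabla C_2$ and of $\beta(C_1)S_1 I_1/N_1-\beta(C_2)S_2 I_2/N_2$ (the latter separated into a $\beta$-difference piece governed by (A4) and an $SI/N$-difference piece handled via the elementary pointwise bound $0\leq SI/N\leq \min(S,I)$, valid by the non-negativity provided in Lemma \ref{lem-2}(i)).

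Next I would test each difference equation against itself, taking $\boldsymbol{Z}=\boldsymbol{U}$, $\psi_C=C$, $\psi_S=S$, $\psi_I=I$, $\psi_R=R$. Incompressibility and zero boundary conditions annihilate the diagonal convection terms $\int(\boldsymbol{U}_2\cdot\nabla)\boldsymbol{U}\cdot\boldsymbol{U}$ and $\int \boldsymbol{U}_2\cdot\nabla C\,C$ exactly as in (\ref{galc}), while assumption (A1) supplies the coercive bound $\int\nu(C_1)\nabla\boldsymbol{U}:\nabla\boldsymbol{U}\geq \nu_1\|\nabla\boldsymbol{U}\|_{L^2}^2$. For the off-diagonal trilinear terms I would apply H\"older followed by (\ref{Gagliardo-Nirenberg-U}) in the 2D form $\|\boldsymbol{U}\|_{L^4}^2\leq c\|\boldsymbol{U}\|_{L^2}\|\boldsymbol{U}\|_{H^1}$; for instance,
\begin{equation*}
\Bigl|\int_\Omega (\boldsymbol{U}\cdot\nabla)\boldsymbol{U}_1\cdot\boldsymbol{U}\,d\mathbf{x}\Bigr|\leq \|\boldsymbol{U}\|_{L^4}^2\|\nabla\boldsymbol{U}_1\|_{L^2}\leq c\|\boldsymbol{U}\|_{L^2}\|\boldsymbol{U}\|_{H^1}\|\nabla\boldsymbol{U}_1\|_{L^2},
\end{equation*}
and after Young's inequality (\ref{inq-Young}) the $H^1$ factor of $\boldsymbol{U}$ is absorbed by the $\nu_1$-dissipation, leaving a prefactor $\|\nabla\boldsymbol{U}_1\|_{L^2}^2\in L^1(0,T)$ thanks to Lemma \ref{lem-1}. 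The same scheme disposes of $\int \boldsymbol{U}\cdot\nabla C_1\,C$, of the Lipschitz transmission term, and of the $S$-, $I$-, $R$-reaction terms, which are routine linear bounds once (A4) and the non-negativity of $S_i, I_i, R_i$ are in hand.

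Summing the five resulting inequalities then produces a differential inequality of Gronwall type
\begin{equation*}
\frac{d}{dt}\Phi(t)+c\,\Psi(t)\;\leq\; g(t)\,\Phi(t),
\end{equation*}
where $\Phi(t):=\|\boldsymbol{U}(t)\|_{L^2}^2+\|C(t)\|_{L^2}^2+\|S(t)\|_{L^2}^2+\|I(t)\|_{L^2}^2+\|R(t)\|_{L^2}^2$, $\Psi(t)$ collects the $H^1$-dissipation of the five differences, and $g\in L^1(0,T)$ by virtue of the a priori bounds of Lemmas \ref{lem-1}--\ref{lem-2}. Since $\Phi(0)=0$, Gronwall's lemma forces $\Phi\equiv 0$, which is the claimed uniqueness.

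The main technical obstacle I anticipate is the variable-viscosity coupling term $\int(\nu(C_1)-\nu(C_2))\nabla\boldsymbol{U}_2:\nabla\boldsymbol{U}$: at the weak-solution regularity level only $\nabla\boldsymbol{U}_2\in L^2(0,T;L^2(\Omega))$ is available, so the Lipschitz bound $|\nu(C_1)-\nu(C_2)|\leq L_\nu|C|$ must be balanced against a careful interpolation of $C$ via (\ref{Gagliardo-Nirenberg-C}) so that the product can be absorbed into the combined dissipation of $\boldsymbol{U}$ and $C$ with an $L^1$-in-time prefactor; the restriction $d=2$ is essential here, since already for $d=3$ the exponent $\zeta=d/4$ is too large. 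The second, milder, delicate point is the $SI/N$ difference, which is circumvented without any positivity assumption on $N$ by relying on the $\min(S,I)$ estimate that already played a role in the existence proof.
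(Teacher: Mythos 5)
Your proposal is correct and follows essentially the same route as the paper's proof: an $L^2$-energy estimate on the differences with the same algebraic splittings of the viscosity, convection, advection and transmission terms, cancellation of the diagonal trilinear terms by incompressibility as in (\ref{galc}), the 2D Gagliardo--Nirenberg inequality with $\zeta=1/2$ plus Young's inequality to absorb everything into the $\nu_1$- and $D$-dissipation, and Gronwall with an $L^1(0,T)$ prefactor $K(t)$ built from $\|\boldsymbol{U}_2\|_{\mathbf{H}^1}^2$, $\|C_1\|_{H^1}^2$, $\|S_2\|_{H^1}^2$. One small correction of attribution: the pointwise bound $0\leq SI/N\leq\min(S,I)$ only controls the \emph{factor} $S_2I_2/N_2$ appearing in the $\beta$-difference piece, whereas the difference piece $\beta(C_1)\bigl[S_1I_1/N_1-S_2I_2/N_2\bigr]$ is handled in the paper by the global Lipschitz continuity of the map $(S,I,R)\mapsto SI/N$ on the nonnegative orthant (the constant $L_g$ there), which is the tool you actually need at that step.
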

	\begin{proof}
		Suppose that there are two solutions $$(\boldsymbol{U}_1,  C_1, S_1, I_1, R_1)$$ and 
		$$(\boldsymbol{U}_2, C_2, S_2, I_2, R_2)$$ 
		to the problem (\ref{eqvar1})-(\ref{eqvar2}). 
		
		\noindent
{Denote $\boldsymbol{U}:= \boldsymbol{U}_1 - \boldsymbol{U}_2$,  $C:= C_1 - C_2$, $S:= S_1 - S_2$, $I:= I_1 - I_2$ and $R:= R_1 - R_2$.   
		 Then, $\boldsymbol{U}$, $C$, $S$, $I$ and  $R$ satisfy the following equations}
		\begin{eqnarray}
			\left\langle\partial_{t}\boldsymbol{U}, \boldsymbol{Z}\right\rangle+ \int_{\Omega} \nu(C_1) \nabla \boldsymbol{U}: \nabla \boldsymbol{Z}~d{\mathbf{x}}+\int_{\Omega}\left[\nu(C_1)-\nu\left(C_{2}\right)\right] \mathbb{D}(\boldsymbol{U}_{2}): \mathbb{D}(\boldsymbol{Z}) ~d{\mathbf{x}}&
			\nonumber\\
			+\int_{\Omega}( \boldsymbol{U}\cdot \nabla) \boldsymbol{U}_{2}\cdot \boldsymbol{Z}~d{\mathbf{x}}+\int_{\Omega}( \boldsymbol{U}_1\cdot \nabla) \boldsymbol{U}\cdot \boldsymbol{Z}~d{\mathbf{x}}
			&=0, \;\;\label{uni1} \\  
			\left\langle\partial_tC, \psi_C\right\rangle+D_C\int_{\Omega}\nabla C\cdot\nabla\, \psi_C~d{\mathbf{x}}+\int_{\Omega}\boldsymbol{U}\cdot\nabla C_1 \psi_C~d{\mathbf{x}}+\int_{\Omega} \boldsymbol{U}_{2} \cdot\nabla C\, \psi_C~d{\mathbf{x}}
			&
			\nonumber\\
			-\alpha\int_{\Omega} I\, \psi_C~d{\mathbf{x}}+\lambda\int_{\Omega} C\, \psi_C~d{\mathbf{x}}
			&=0,\label{uni2} \\ 
			\left\langle\partial_tS, \psi_S\right\rangle+D_S\int_{\Omega}\nabla S\cdot\nabla\, \psi_S ~d{\mathbf{x}}+\int_{\Omega}\Big[\beta(C_1)\frac{S_1 I_1}{N_1}-\beta(C_2)\frac{S_2 I_2}{N_2}\Big]\psi_S~d{\mathbf{x}}
			+\eta\int_{\Omega} S\, \psi_S~d{\mathbf{x}}&= 0, \;\;\label{uni3}
			\\ 
			\left\langle\partial_t I, \psi_I\right\rangle+D_I\int_{\Omega}\nabla I\cdot\nabla\, \psi_I ~d{\mathbf{x}}-\int_{\Omega}\Big[\beta(C_1)\frac{S_1 I_1}{N_1}-\beta(C_2)\frac{S_2 I_2}{N_2}\Big]\psi_I~d{\mathbf{x}}
			+(\eta+\gamma)\int_{\Omega} I\, \psi_I~d{\mathbf{x}}&= 0, \;\;\label{uni4}
			\\ 
			\left\langle\partial_tR, \psi_R\right\rangle+D_R\int_{\Omega}\nabla R\cdot\nabla\, \psi_R ~d{\mathbf{x}}
			+\eta\int_{\Omega} R\, \psi_R~d{\mathbf{x}}-\gamma\int_{\Omega} I\, \psi_R~d{\mathbf{x}}&= 0, \;\;\label{uni5}
		\end{eqnarray} 
		for all test functions  
		$\boldsymbol{Z} \in D([0,T); \mathbf{H})$, $\psi_C \in D([0,T);  H^1_0)$, $\psi_S \in D([0,T);  H^1_0)$, $\psi_I \in D([0,T);  H^1_0)$ 
		and $\psi_R \in D([0,T);  H^1_0)$,  
		with the initial condition
		$$\begin{aligned}
			\boldsymbol{U}(0,\mathbf{.})=C(0,\mathbf{.})=S(0,\mathbf{.})=I(0,\mathbf{.})=R(0,\mathbf{.})&=0& \text{ in } & \Omega.
			\end{aligned}
		$$
		Now, by using $\boldsymbol{Z} = \boldsymbol{U}(t)$ as a test function in $(\ref{uni1})$, we acquire the following estimate
		\begin{equation}\label{testU}
			\begin{aligned}
				\frac{1}{2} \frac{\mathrm{d}}{\mathrm{d} t}\|\boldsymbol{U}(t)\|_{\boldsymbol{L}^2(\Omega)}^{2}+\nu_{1}\|\boldsymbol{U}(t)\|_{\boldsymbol{H}^1(\Omega)}^{2} & \leq \left|\int_{\Omega}( \boldsymbol{U}(t)\cdot \nabla) \boldsymbol{U}_{2}(t)\cdot \boldsymbol{U}(t)~d{\mathbf{x}}\right|+\left|\int_{\Omega}( \boldsymbol{U}_1(t)\cdot \nabla) \boldsymbol{U}(t)\cdot \boldsymbol{U}(t)~d{\mathbf{x}}\right|
				\\
				& \hspace{1  cm}
				+\left|\int_{\Omega}(\nu(C_1)-\nu(C_{2})) \mathbb{D}(\boldsymbol{U}_{2}(t)): \mathbb{D}(\boldsymbol{U}(t))~d{\mathbf{x}}\right|.
			\end{aligned} 
		\end{equation}
		Based on (\ref{Gagliardo-Nirenberg-U}) and (\ref{inq-Young}), the first term in (\ref{testU}) can be estimate as follows
		\begin{equation}\label{estem-b}
			\begin{split} 
				\hspace*{-0.2cm}
				\left|\int_{\Omega}( \boldsymbol{U}(t)\cdot \nabla) \boldsymbol{U}_{2}(t)\cdot \boldsymbol{U}(t)~d{\mathbf{x}}\right| 
				&\leq {\|\boldsymbol{U}(t)\|}_{\boldsymbol{L}^4(\Omega)} {\|\nabla\boldsymbol{U}_2(t)\|}_{\boldsymbol{L}^2(\Omega)} {\|\boldsymbol{U}(t)\|}_{\boldsymbol{L}^4(\Omega)}
				\\
				&\leq c{\|\boldsymbol{U}(t)\|}_{\boldsymbol{H}^1(\Omega)}^{1/2}
				{\|\boldsymbol{U}(t)\|}_{\boldsymbol{L}^2(\Omega)}^{1/2} \left\|\boldsymbol{U}_{2}(t)\right\|_{\mathbf{H}^1(\Omega)}{\|\boldsymbol{U}(t)\|}_{\boldsymbol{H}^1(\Omega)}^{1/2}
				{\|\boldsymbol{U}(t)\|}_{\boldsymbol{L}^2(\Omega)}^{1/2}
				\\
				&\leq \delta\|\boldsymbol{U}(t)\|_{\boldsymbol{H}^1(\Omega)}^{2}+c_{\delta}\left\|\boldsymbol{U}_{2}(t)\right\|_{\mathbf{H}^{1}(\Omega)}^2\|\boldsymbol{U}(t)\|_{\mathbf{L}^{2}(\Omega)}^{2}.
			\end{split}
		\end{equation}
		For the second term, by Green's formula, we have that
		\begin{equation}\label{estm-b-null} 
			\int_{\Omega}( \boldsymbol{U}_1(t)\cdot \nabla) \boldsymbol{U}(t)\cdot \boldsymbol{U}(t)~d{\mathbf{x}}=0.
		\end{equation}
		In addition, the last term in (\ref{testU})  can be estimates by means of  Young inequality as well as H{\"o}lder's inequality to obtain
		\begin{equation}\label{estmD}
			\begin{aligned}
				\left|\int_{\Omega}\left(\nu(C_1)-\nu(C_{2})\right) \mathbb{D}(\boldsymbol{U}_{2}): \mathbb{D}(\boldsymbol{U})~d{\mathbf{x}}\right|
				& \leq \|(\nu(C_1)-\nu(C_{2}))\|_{L^{\infty}(\Omega)}
				\| \mathbb{D}(\boldsymbol{U}_{2})\|_{\mathbf{L}^{2}(\Omega)} \|\mathbb{D}(\boldsymbol{U})\|_{\mathbf{L}^{2}(\Omega)} \\
				& \leq c\|C(t)\|_{L^2(\Omega)}
				\| \mathbb{D}(\boldsymbol{U}_{2})\|_{\boldsymbol{L}^{2}(\Omega)} \|\boldsymbol{U}(t)\|_{\mathbf{H}^{1}(\Omega)}\\
				& \leq \delta \|\boldsymbol{U}(t)\|_{\boldsymbol{H}^{1}(\Omega)}^2+C_\delta
				\|\boldsymbol{U}_{2}(t)\|_{\mathbf{H}^{1}(\Omega)}^2\|C(t)\|_{L^2(\Omega)}^2.
			\end{aligned}
		\end{equation}
		Consequently, the estimates $(\ref{estem-b})-(\ref{estmD})$ imply that
		\begin{equation}\label{estmU}
			\begin{aligned}
				\frac{\mathrm{d}}{\mathrm{d} t}\|\boldsymbol{U}(t)\|_{\boldsymbol{L}^2(\Omega)}^{2}+c\|\boldsymbol{U}(t)\|_{\boldsymbol{H}^1(\Omega)}^{2}  & \leq \delta \|\boldsymbol{U}(t)\|_{\boldsymbol{H}^{1}(\Omega)}^2+
				c_{\delta}\|\boldsymbol{U}_{2}(t)\|_{\mathbf{H}^{1}(\Omega)}^2\left(\|\boldsymbol{U}(t)\|_{\mathbf{L}^{2}(\Omega)}^{2}+\|C(t)\|_{L^2(\Omega)}^2\right), 
			\end{aligned} 
		\end{equation} 
		Now, we substitute $\psi_C=C$ in (\ref{uni2}), to obtain the
		following estimate   
		$$
		\begin{array}{rl}
			\displaystyle \frac{1}{2}\frac{\mathrm{d}}{\mathrm{d} t}\|C(t)\|_{L^2(\Omega)}^{2}+D_C\left\|C\right\|_{H^{1}(\Omega)}^{2}&\leq \displaystyle \Big|\int_{\Omega}\boldsymbol{U}\cdot\nabla C_1 \,C~d{\mathbf{x}}\Big| +\Big|\int_{\Omega} \boldsymbol{U}_{2} \cdot\nabla C\, C~d{\mathbf{x}}\Big| \\&\displaystyle\qquad+\alpha\left| \int_{\Omega} I\, C~d{\mathbf{x}} \right|+\lambda\left|\int_{\Omega} C^2~d{\mathbf{x}} \right|.
		\end{array}
		$$
		By virtue of  H{\"o}lder and Young inequalities, we estimate the terms on the right-hand side of the previous equation as follows
		
		\begin{equation}\label{C1-C2}
			\Big|\int_{\Omega} I C ~d{\mathbf{x}} \Big| \leq \frac{1}{2}\Big(\left\|I\right\|_{L^{2}(\Omega)}^{2}+\left\|C\right\|_{L^{2}(\Omega)}^{2}\Big).
		\end{equation}
		By the Gagliardo-Nirenberg inequality with $d=2$, we deduce that
		\begin{equation}
			\begin{aligned}
				&\Big|\int_{\Omega}\boldsymbol{U}(t)\cdot\nabla C_1(t) \,C(t)~d{\mathbf{x}}\Big|
				\\ &\qquad \leq
				\|\boldsymbol{U}(t)\|_{\mathbf{L}^{4}(\Omega)}\left\|\nabla C_1(t)\right\|_{\mathbf{L}^{2}(\Omega)}\left\|C(t)\right\|_{L^{4}(\Omega)} 
				\\ &\qquad  \leq
				c\|\boldsymbol{U}(t)\|_{\mathbf{H}^{1}(\Omega)}^{1/2}\|\boldsymbol{U}(t)\|_{\mathbf{L}^{2}(\Omega)}^{1/2}\left\|C_1(t)\right\|_{H^{1}(\Omega)}^{}\left\|C(t)\right\|_{H^{1}(\Omega)}^{1/2}\left\|C\right\|_{L^{2}(\Omega)}^{1/2}
				\\ &\qquad  \leq
				\delta(\left\|C(t)\right\|_{H^{1}(\Omega)}^{}\|\boldsymbol{U}(t)\|_{\mathbf{H}^{1}(\Omega)})+c_{\delta}\left\|C_1\right\|_{H^{1}(\Omega)}^{2}\|\boldsymbol{U}(t)\|_{\mathbf{L}^{2}(\Omega)}^{}\left\|C(t)\right\|_{L^2(\Omega)}
				\\ &\qquad \leq
				\delta/2\Big(\left\|C(t)\right\|_{H^{1}(\Omega)}^{2}\hspace*{-0.1 cm}+\|\boldsymbol{U}(t)\|_{\mathbf{H}^{1}(\Omega)}^{2}\Big)\hspace*{-0.1 cm}+c_{\delta}\left\|C_1\right\|_{H^{1}(\Omega)}^{2}\hspace*{-0.1 cm}\Big(\|\boldsymbol{U}(t)\|_{\mathbf{L}^{2}(\Omega)}^{2}\hspace*{-0.1 cm}+\left\|C(t)\right\|_{L^2(\Omega)}^2\Big),
			\end{aligned}
		\end{equation}
	for a given positive function $c_{\delta}$ depending on $\delta$.

	\noindent
		Moreover, using the same technique in (\ref{galc}), we obtain
		\begin{equation}\label{E2}
			\begin{aligned}
				\int_{\Omega} \boldsymbol{U}_{2} \cdot\nabla C\, C~d{\mathbf{x}}=0.
			\end{aligned}
		\end{equation} 
		Collecting the previous results \eqref{C1-C2}-\eqref{E2}, we deduce that
		\begin{equation}\label{estmtheta}
			\begin{split}
				\frac{d}{d t}\left\|C\right\|_{L^{2}(\Omega)}^{2}&+c\left\|C(t)\right\|_{H^{1}(\Omega)}^{2}
				\\&\leq \delta\left(\left\|C(t)\right\|_{H^{1}(\Omega)}^{2}+\|\boldsymbol{U}\|_{\mathbf{H}^{1}(\Omega)}^{2}\right)+c_{\delta}\left\|C_1\right\|_{H^{1}(\Omega)}^{2} \left(\|C\|_{L^{2}(\Omega)}^{2}+\|\boldsymbol{U}\|_{\mathbf{L}^{2}(\Omega)}^{2}\right)\\
				&\qquad+\frac{1}{2}\Big(\left\|I\right\|_{L^{2}(\Omega)}^{2}+\left\|C\right\|_{L^{2}(\Omega)}^{2}\Big)+\lambda\left\|C\right\|_{L^{2}(\Omega)}^{2}.
			\end{split}
		\end{equation}
		
		\noindent For $\psi_S=S$ in (\ref{uni3}), we obtain that 
		\begin{equation}\label{estmS0}
			\displaystyle \frac{1}{2}\frac{\mathrm{d}}{\mathrm{d} t}\|S(t)\|_{L^2(\Omega)}^{2}+D_C\left\|S(t)\right\|_{H^{1}(\Omega)}^{2}\leq  \Big|\int_{\Omega}\Big[\beta(C_1)\frac{S_1 I_1}{N_1}-\beta(C_2)\frac{S_2 I_2}{N_2}\Big] S~d{\mathbf{x}}\Big| +\eta\left|\int_{\Omega} S^2~d{\mathbf{x}} \right|.
		\end{equation}
		We get that
		\begin{equation*}
			\begin{split}
				\Big|\int_{\Omega}\Big[\beta(C_1)\frac{S_1 I_1}{N_1}&-\beta(C_2)\frac{S_2 I_2}{N_2}\Big] S~d{\mathbf{x}}\Big|
				\\&=\Big|\int_{\Omega}\beta(C_1)\Big[\frac{S_1 I_1}{N_1}-\frac{S_2 I_2}{N_2}\Big] S~d{\mathbf{x}}+\int_{\Omega}\Big[\beta(C_1)-\beta(C_2)\Big]\frac{S_2 I_2}{N_2} S~d{\mathbf{x}}\Big|
				\\&\leq \beta_2\int_{\Omega}\Big|\frac{S_1 I_1}{N_1}-\frac{S_2 I_2}{N_2}\Big| S~d{\mathbf{x}}+\int_{\Omega}\Big|\beta(C_1)-\beta(C_2)\Big|\frac{S_2 I_2}{N_2} S~d{\mathbf{x}}\Big|
				\\&\leq\beta_2\left\|\frac{S_1 I_1}{N_1}-\frac{S_2 I_2}{N_2}\right\|_{L^{2}(\Omega)}\left\|S\right\|_{L^{2}(\Omega)}+ \left\|\beta(C_1)-\beta(C_2)\right\|_{L^{4}(\Omega)} \left\|\frac{S_2 I_2}{N_2}\right\|_{L^{2}(\Omega)} \|S\|_{L^{4}(\Omega)}
				\\&\leq \beta_2 L_g(\left\|S\right\|_{L^{2}(\Omega)}+\left\|I\right\|_{L^{2}(\Omega)})\|S\|_{L^{2}(\Omega)}+ L_{\beta}\|C\|_{L^{4}(\Omega)}\left\|S_2\right\|_{L^{2}(\Omega)} \|S\|_{L^{4}(\Omega)}
				\\&\leq \beta_2 L_g(\left\|S\right\|_{L^{2}(\Omega)}+\left\|I\right\|_{L^{2}(\Omega)})\|C\|_{L^{2}(\Omega)}
				\\
			    &+ c\left\|C(t)\right\|_{H^{1}(\Omega)}^{1/2}\left\|C(t)\right\|_{L^{2}(\Omega)}^{1/2}\left\|S_2\right\|_{L^{2}(\Omega)} \left\|S(t)\right\|_{H^{1}(\Omega)}^{1/2}\left\|S(t)\right\|_{L^{2}(\Omega)}^{1/2}
				\\&\leq \delta \left(\left\|S\right\|_{H^{1}(\Omega)}^2+\left\|C\right\|_{H^{1}(\Omega)}^2\right)+c\left( \left\|C\right\|_{L^{2}(\Omega)}^2+\left\|I\right\|_{L^{2}(\Omega)}^2+\left\|S\right\|_{L^{2}(\Omega)}^2\right)
				\\&+c_{\delta}\left\|S_2\right\|_{H^{1}(\Omega)}^{2}\left( \left\|C\right\|_{L^{2}(\Omega)}^2+\left\|S\right\|_{L^{2}(\Omega)}^2\right). 
			\end{split}
		\end{equation*}
		Thereby
		\begin{equation}\label{estmS}
			\begin{split}
				&\frac{\mathrm{d}}{\mathrm{d} t}\|S(t)\|_{L^2(\Omega)}^{2}+c_1\left\|S\right\|_{H^{1}(\Omega)}^{2}\\
				&\quad\leq \delta \left(\left\|S\right\|_{H^{1}(\Omega)}^2+\left\|C\right\|_{H^{1}(\Omega)}^2\right)+\left(c_{\delta}\left\|S_2\right\|_{H^{1}(\Omega)}^{\frac{1}{1-\zeta}}+c_2\right)\left( \left\|C\right\|_{L^{2}(\Omega)}^2+\left\|I\right\|_{L^{2}(\Omega)}^2+\left\|S\right\|_{L^{2}(\Omega)}^2\right). 
			\end{split}
		\end{equation}

		\noindent Similarly to (\ref{estmS}),  we use $\psi_I=I$ as a test function in (\ref{uni4}) to deduce that   
		\begin{equation}\label{estmI}
			\begin{split}
				&\displaystyle \frac{\mathrm{d}}{\mathrm{d} t}\|I(t)\|_{L^2(\Omega)}^{2}+c_3\left\|I\right\|_{H^{1}(\Omega)}^{2}
				\\
				&\quad\leq  \Big|\int_{\Omega}\Big[\beta(C_1)\frac{S_1 I_1}{N_1}-\beta(C_2)\frac{S_2 I_2}{N_2}\Big] I~d{\mathbf{x}}\Big| +(\eta+\gamma)\left|\int_{\Omega} I^2~d{\mathbf{x}} \right|
				\\
				&\quad\leq \delta \left(\left\|I\right\|_{H^{1}(\Omega)}^2+\left\|C\right\|_{H^{1}(\Omega)}^2\right)+\left(c_{\delta}\left\|S_2\right\|_{H^{1}(\Omega)}^{\frac{1}{1-\zeta}}+c_4\right)
				\left( \left\|C\right\|_{L^{2}(\Omega)}^2+\left\|I\right\|_{L^{2}(\Omega)}^2+\left\|S\right\|_{L^{2}(\Omega)}^2\right) 
				.
			\end{split}
		\end{equation}

		\noindent Further, we use substitute $\psi_R=R$ in (\ref{uni5}) to get   
		\begin{equation}\label{estmR}
			\begin{split}
				\displaystyle \frac{1}{2}\frac{\mathrm{d}}{\mathrm{d} t}\|R(t)\|_{L^2(\Omega)}^{2}+D_R\left\|R\right\|_{H^{1}(\Omega)}^{2}&\leq  \gamma \Big|\int_{\Omega}I\, R~d{\mathbf{x}}\Big| +\eta\left|\int_{\Omega} R^2~d{\mathbf{x}} \right|
				\\&\leq\gamma\left\|I\right\|_{L^{2}(\Omega)}\left\|C\right\|_{L^{2}(\Omega)}+\eta \left\|C\right\|_{L^{2}(\Omega)}^{2}
				\\&\leq\max(\gamma/2,\eta+\gamma/2)\Big(\left\|I\right\|_{L^{2}(\Omega)}^2+ \left\|C\right\|_{L^{2}(\Omega)}^{2}\Big)
				.
			\end{split}
		\end{equation}

		\noindent	We sum up the estimates (\ref{estmU}), (\ref{estmtheta}),  (\ref{estmS}), (\ref{estmI}) and (\ref{estmR}) to deduce, for a sufficiently small $\delta>0$
		\begin{equation}\label{estm-U-Theta}
			\begin{aligned}
				\frac{d}{d t}\Big(\|\boldsymbol{U}\|_{\mathbf{L}^{2}}^{2}&+ \left\|C\right\|_{L^{2}(\Omega)}^{2}+\left\|S\right\|_{L^{2}(\Omega)}^{2}+\left\|I\right\|_{L^{2}(\Omega)}^{2}+\left\|R\right\|_{L^{2}(\Omega)}^{2} \Big)\\ & \leq K(t)\left(\|\boldsymbol{U}\|_{\mathbf{L}^{2}}^{2}+ \left\|C\right\|_{L^{2}(\Omega)}^{2}+\left\|S\right\|_{L^{2}(\Omega)}^{2}+\left\|I\right\|_{L^{2}(\Omega)}^{2}+\left\|R\right\|_{L^{2}(\Omega)}^{2}  \right),
			\end{aligned} 
		\end{equation}
		where $$K(t)=c_{\delta}\left(\left\|S_2(t)\right\|_{H^{1}(\Omega)}^{2}+\left\|C_1(t)\right\|_{H^{1}(\Omega)}^{2}+\|\boldsymbol{U}_{2}(t)\|_{\mathbf{H}^{1}(\Omega)}^2+1\right).$$
		
		\noindent
		Applying Gronwall's inequality to $(\ref{estm-U-Theta})$ along with the fact that $$\boldsymbol{U}(0,\mathbf{.})=C(0,\mathbf{.})=S(0,\mathbf{.})=I(0,\mathbf{.})=R(0,\mathbf{.})=0$$  
		we arrive at
		$$C=S=I=R=\boldsymbol{U}=0.$$
		\noindent
		This yields the uniqueness of the weak solution. 
	\end{proof}
\begin{rem}
We mention that the assumption of global Lipschitzity on the coupling coefficients $\nu$ and $\beta$ can be relaxed to local Lipschitzity. Indeed, we can proceed by truncation techniques \cite{mehdaoui2024optimal} and establish uniqueness as long as the initial condition is bounded.
\end{rem}

\section{Numerical simulations of the SIRPNS model}\label{numerics}

\subsection{Numerical scheme}
	For the numerical resolution of the fully coupled system (\ref{SIRCNS}), we use a combination of standard techniques for time and space discretization.
	On the one hand, in order to discretize time, we fix an integer $M$ and define a time subdivision
	$
	t_0 = 0 < t_1 < \cdots < t_M = T
	$, 
	where $T$ represents the final simulation time. The time steps are defined by $\Delta t = t_{n+1} - t_n$ for $n = 0, \ldots, M-1$. Then, 
	we build a Backward Euler scheme for the time semi-discretization of the problem~(\ref{eqvar1})-(\ref{eqvar2}). For any sequence of functions $\{f^n\}_{n=0}^{M}$ defined on the domain $\Omega$, the discrete time derivative operator is defined as follows:
	$$
	D_{\Delta t} f^{n+1} = \frac{f^{n+1} - f^n}{\Delta t}.
	$$
	
	Following the discretization in time, we obtain an elliptic problem varying in the spatial domain $\Omega$. In order to approximate this problem, we use the Finite Element Method (FEM) as follows.
	
	\begin{itemize}
		\item[$\bullet$] 	$P1$-Bubble Element for the velocity variable ($\boldsymbol{U}$).
		\item[$\bullet$]  $P1$ Element for the pressure ($P$), concentration of the pathogen ($C$), density of the susceptible ($S$), density of the infected ($I$), and density of the recovered ($R$).
	\end{itemize}
	For the discrete versions of these variables, we keep the same notations $\boldsymbol{U}$, $C$, $S$, $I$, and $R$ . To handle the nonlinearities present in the problem, we implement an iterative scheme based on the fixed-point iteration method.
	In each iteration of this scheme, the velocity ($\boldsymbol{U}$), susceptibility ($S$), infection ($I$), and recovery ($R$) variables are computed using the value of the other concentration variable from the previous iteration. The proposed iterative scheme is implemented as follows:

\begin{algorithm}[H]
	\caption{Iterative scheme for the numerical solution}
	\footnotesize
	\begin{description}
		\item[Step 1. ] Initialize with an initial guess $(C^0, \boldsymbol{U}^0, S^0 , I^0 , R^0)$.
		
		\item[Step 2. ] For $n \geq 0$, given $\left(C^n, \boldsymbol{U}^n, S^n , I^n , R^n\right)$, compute the values of $\left(C^{n+1}, \boldsymbol{U}^{n+1}, S^{n+1} , I^{n+1} , R^{n+1}\right)$ as follows:
		\begin{enumerate}
			\item Given $C^n$, $S^n$, $I^n$, solve for $(\boldsymbol{U}^{n+1},  p^{n+1}, S^{n+1}, I^{n+1}, R^{n+1})$ from:
			\begin{eqnarray*}
				\left\langle \frac{\boldsymbol{U}^{n+1}-\boldsymbol{U}^{n}}{\Delta t}, \boldsymbol{Z} \right\rangle 
				+ \int_{\Omega}\nu(C^n) \nabla\boldsymbol{U}^{n+1}: \nabla \boldsymbol{Z} \, d\mathbf{x}
				+ \int_{\Omega}(\boldsymbol{U}^{n+1}\cdot \nabla) \boldsymbol{U}^{n}\cdot \boldsymbol{Z} \, d\mathbf{x} 
				&- \displaystyle\int_{\Omega} p^{n+1} \nabla \cdot \boldsymbol{Z} \, d\mathbf{x} 
				&= \int_{\Omega}\boldsymbol{f}\cdot \boldsymbol{Z}\, d\mathbf{x} \\
				&     \displaystyle  \int_{\Omega} q \nabla \cdot \boldsymbol{U}^{n+1} \, d\mathbf{x} 
				&= 0.
			\end{eqnarray*}
			and			
			\begin{eqnarray*}
				\begin{split}  	  		
					\left\langle \frac{S^{n+1}-S^{n}}{\Delta t}, \psi_S \right\rangle+ D_S\int_{\Omega}\nabla S^{n+1} \cdot\nabla \psi_S\, d\mathbf{x}+\int_{\Omega}\beta(C^n) \frac{S^{n+1}\,I^n}{N_n} \, \psi_S\, d\mathbf{x}
					-\int_{\Omega} \Lambda \,\psi_S \, d\mathbf{x} + \eta \int_{\Omega} S^{n+1} \,\psi_S \, d\mathbf{x}
					&=0,
					\\ 	  		
					\left\langle \frac{I^{n+1}-I^{n}}{\Delta t}, \psi_I \right\rangle+ D_I\int_{\Omega}\nabla I^{n+1} \cdot\nabla \psi_I\, d\mathbf{x}-\int_{\Omega}\beta(C^n) \frac{S^n\,I^{n+1}}{N_n} \, \psi_I\, d\mathbf{x} + (\eta+\gamma) \int_{\Omega} I^{n+1} \,\psi_I \, d\mathbf{x}
					&=0,
					\\ 	  		
					\left\langle \frac{R^{n+1}-R^{n}}{\Delta t}, \psi_R \right\rangle+ D_R\int_{\Omega}\nabla R^{n+1} \cdot\nabla \psi_R\, d\mathbf{x}-\gamma\int_{\Omega} I^n \,\psi_R \, d\mathbf{x}+ \eta\int_{\Omega} R^{n+1} \,\psi_R \, d\mathbf{x}
					&=0,
				\end{split} 
			\end{eqnarray*} 
			
			\item Given $\boldsymbol{u}^{n+1}$ and $I^{n+1}$, solve for $C^{n+1}$ from:
			\begin{eqnarray*} 
				\left\langle \frac{C^{n+1}-C^{n}}{\Delta t}, \psi_C \right\rangle 
				+ \int_{\Omega}  \boldsymbol{U}^{n+1}\cdot\nabla C^n\, \psi_C\, d\mathbf{x}+ D_C\int_{\Omega}\nabla C^{n+1} \cdot\nabla \psi_C\, d\mathbf{x}
				- \alpha\int_{\Omega} I^{n+1}\, \psi_C\, d\mathbf{x} + \lambda \int_{\Omega} C^{n+1} \,\psi_C \, d\mathbf{x}  
				= 0.
			\end{eqnarray*}
			
			\item If the difference between $\left(C^{n+1}, \boldsymbol{U}^{n+1}, S^{n+1} , I^{n+1} , R^{n+1}\right)$ and $\left(C^{n}, \boldsymbol{U}^{n}, S^{n} , I^{n} , R^{n}\right)$ exceeds a predefined tolerance, set $n = n + 1$ and return to \textbf{Step 2}. Otherwise, proceed to \textbf{Step 3}.
		\end{enumerate}
		
		\item[Step 3. ] Stop the iterative process once the solution converges (within the tolerance).
	\end{description}
	\label{algorithm}
\end{algorithm}

	\subsection{Numerical experiments and interpretations}
	
	In this section, to validate our proposed model, we conduct several numerical studies that will confirm the interaction between the variables in our SIRPNS model. Specifically, we demonstrate the interaction between C, S, I, and R, as well as the effect of fluid on disease spread in a fluid medium. For this, we consider the fixed spatial domain
	$$\Omega = (0, 1)^2$$
	and the fixed time horizon
	$$(0, T) = (0, 40).$$
		We start from an initial of $S$, $I$, $R$, $C$ and $\boldsymbol{U}$ given by
	$$
	\begin{aligned}
		\boldsymbol{U}_0(x,y)&=\begin{pmatrix}
			\sin^2(\pi (x-1))\sin(\pi (y-1))\cos(\pi (y-1)) \\
			-\sin^2(\pi (y-1))\sin(\pi (x-1))\cos(\pi (x-1))
		\end{pmatrix},\\
		C_0(x, y)&=(0.5+100((x-0.5)^2+(y-0.5)^2))\exp(-(x-0.5)^2-(y-0.5)^2),\\
		S_0(x, y)&=
		\begin{cases}0.9 \exp \left(-(x-0.5)^2-(y-0.5)^2\right), & \text { if }(x-0.5)^2 + (y-0.5)^2  \leq 1, \\ 0, & \text { otherwise.}\end{cases}
		\\
		I_0(x, y)&=
		\begin{cases}0.1 \exp \left(-(x-0.5)^2-(y-0.5)^2\right), & \text { if }(x-0.5)^2 + (y-0.5)^2  \leq 0.1, \\ 0, & \text { otherwise, }\end{cases}\\
		R_0(x, y)&=0.01(0.5+5((x-0.7)^2+(y-0.3)^2))\exp(-100(x-0.7)^2-100(y-0.3)^2)
	\end{aligned}
	$$
	\begin{table}[!ht]
		\centering
		\begin{tabular}{|c|c|c|c|c|c|c|c|c|c|c|c|}
			\hline
			Parameter & $D_S$ & $D_I$ & $D_R$ & $D_C$ & $\alpha$ & $\gamma$ & $\lambda$ & $\eta$ & $\Lambda$ & $\Delta t$ & $T$ \\
			\hline
			Valuer  & 0.2 & 0.3 & 0.4 & 0.1 & 0.6 & 0.4 & 0.4  & 0.05 &0.4 & 0.01 & 40 \\
			\hline
		\end{tabular}
		\caption{Assigned numerical values to the biological parameters (assumed).}
	\end{table}

	
\subsubsection{SIR Interaction}

The first numerical experiment examines how an infection diffuses through space when neither environmental contamination nor fluid motion is present. Figure~\ref{fig:SIR-evolution} shows the time evolution starting from a localized infected zone. The infection gradually extends outward by diffusion, influencing the nearby susceptible population. The number of susceptibles decreases around the initial source, while the recovered density increases as immunity develops. Spatially, the epidemic forms a smooth and symmetric gradient among $S$, $I$, and $R$. This represents a closed population where disease transmission occurs only through local contact. After reaching its maximal intensity, the infection front weakens as recovery dominates, reproducing the classical SIR diffusion pattern: an early expansion followed by attenuation once immune individuals accumulate.


\subsubsection{The impact of environmental pathogens when $\beta(C)=\beta_0+C$ and without fluid}

Allowing the transmission rate to depend on pathogen concentration, $\beta(C)=\beta_0+C$, introduces the effect of environmental contamination. Indeed, an observation of Fig. \ref{fig:SIRC-evolution} asserts that the coupling between the pathogen (through the density $C$) and the infection dynamics ($SIR$) leads to and intensification in disease spread, especially near the contamination center. Additionally, we observe that the infected population $I$ rises more sharply, while the susceptible component $S$ decreases faster than in the diffusion-only case. Although the contaminant $C$ diffuses outward and decays over time, it continues to act as a local reservoir that sustains transmission. Finally, the recovered variable $R$ grows steadily as reinfection persists around contaminated areas. Thus, through this simulation, we conclude  how indirect transmission through the surrounding medium can maintain infection even when direct contact is limited.


\subsubsection{Impact of Fluid Flow in the SIRP Model (Constant versus Pathogen-Dependent Viscosity)}

Incorporating the Navier–Stokes flow reveals the role of fluid advection in pathogen transport (Figures~\ref{fig:CIRCU-const}–\ref{fig:CIRCU-var}). For constant viscosity, advection conveys pathogens along streamlines, stretching and enlarging the infected region downstream. When viscosity depends on pathogen concentration ($\nu=\nu_0+C$), the flow becomes slower and more resistant within contaminated zones. Pathogens are then partially trapped, forming localized clusters of infection. This behaviour reproduces what can occur in natural fluids, where contamination modifies viscosity and consequently alters flow structure, thereby prolonging local persistence of microorganisms.


\subsubsection{Pathogen Transport Before Epidemic Feedback}

To isolate the transport process, pathogen dynamics are first simulated without coupling to the epidemic variables (Figure~\ref{fig:C-evolution}). The contaminant field $C$ expands outward from its initial concentration by diffusion and mild advection, while progressively decaying in amplitude. Over time, the distribution becomes smoother and weaker, defining potential contamination spots that may later trigger infection. This step highlights how environmental persistence alone can shape the initial spatial structure of an outbreak, even before epidemiological feedback is activated.

\newpage
\begin{figure}[H]
	\centering
	\begin{minipage}{\linewidth}
		\includegraphics[width=.3\linewidth]{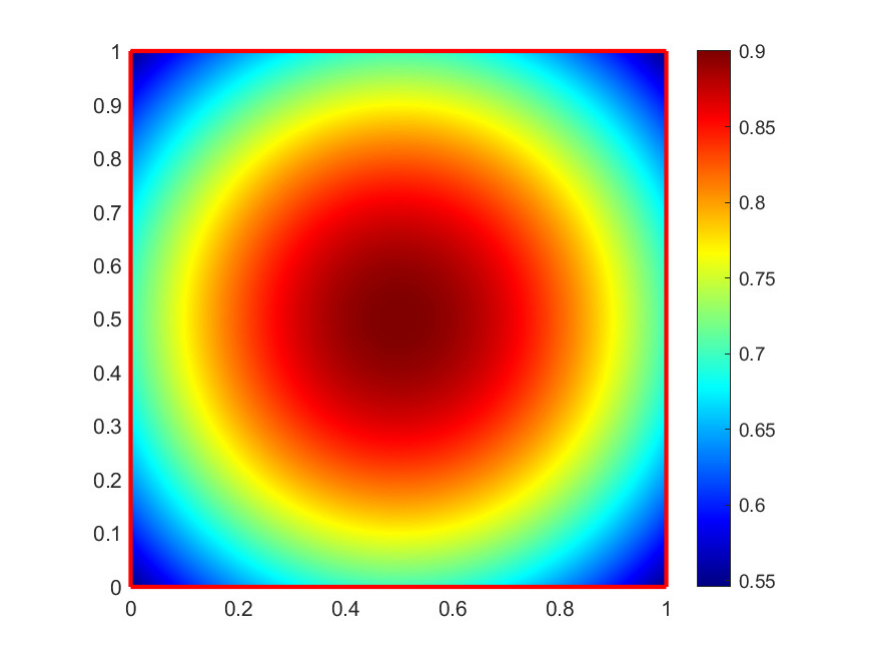}
		\includegraphics[width=.3\linewidth]{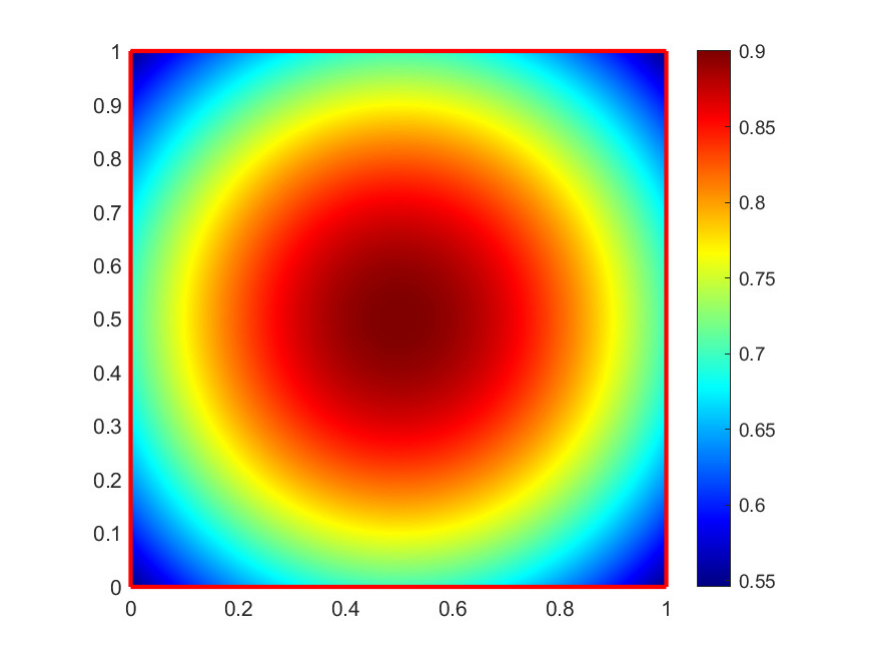}
		\includegraphics[width=.3\linewidth]{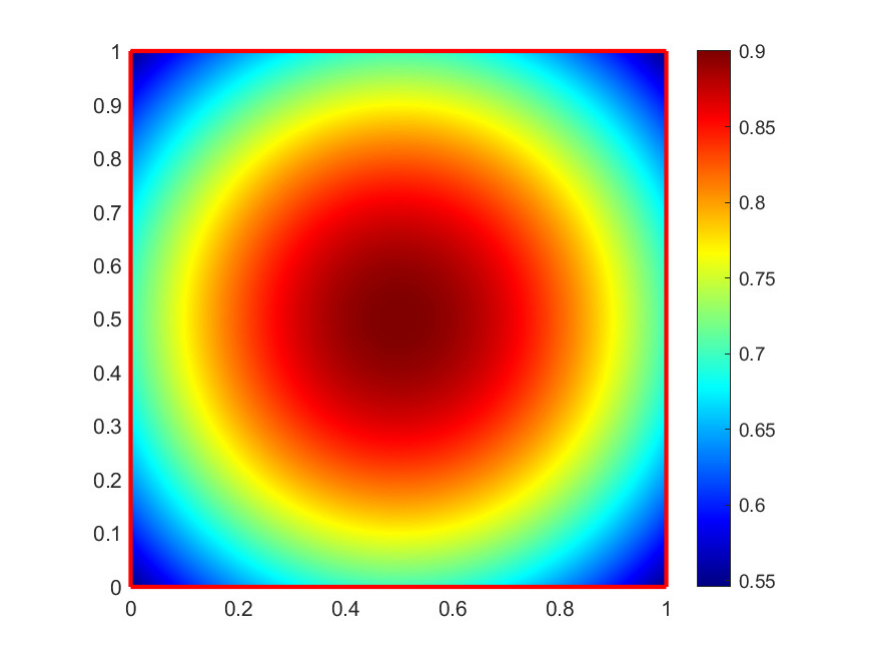}
		\caption*{(a) Spatial distribution of $S$ at $t=0$, $t=20$, and $t=40$.}
	\end{minipage}\\
	\begin{minipage}{\linewidth}
		\includegraphics[width=.3\linewidth]{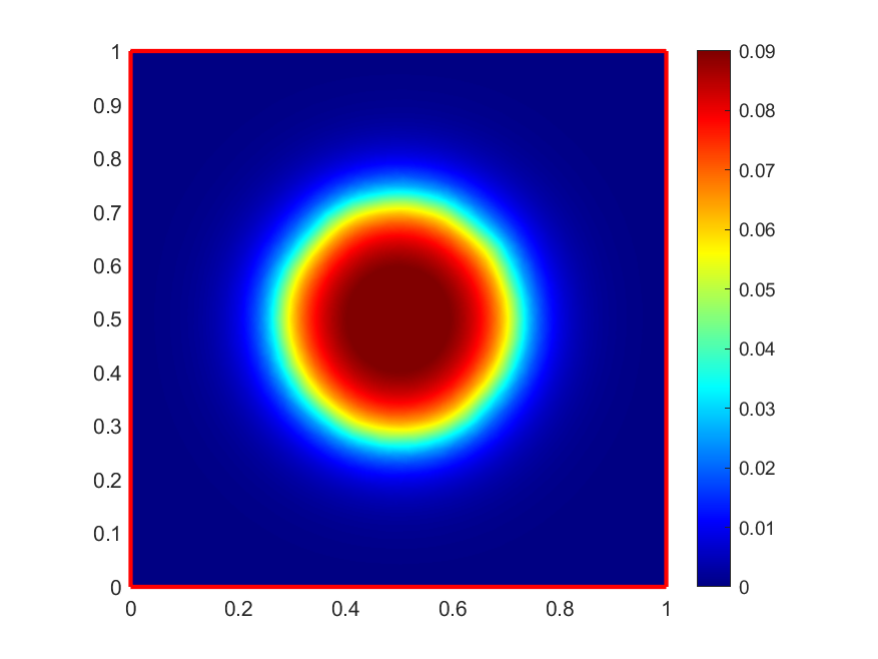}
		\includegraphics[width=.3\linewidth]{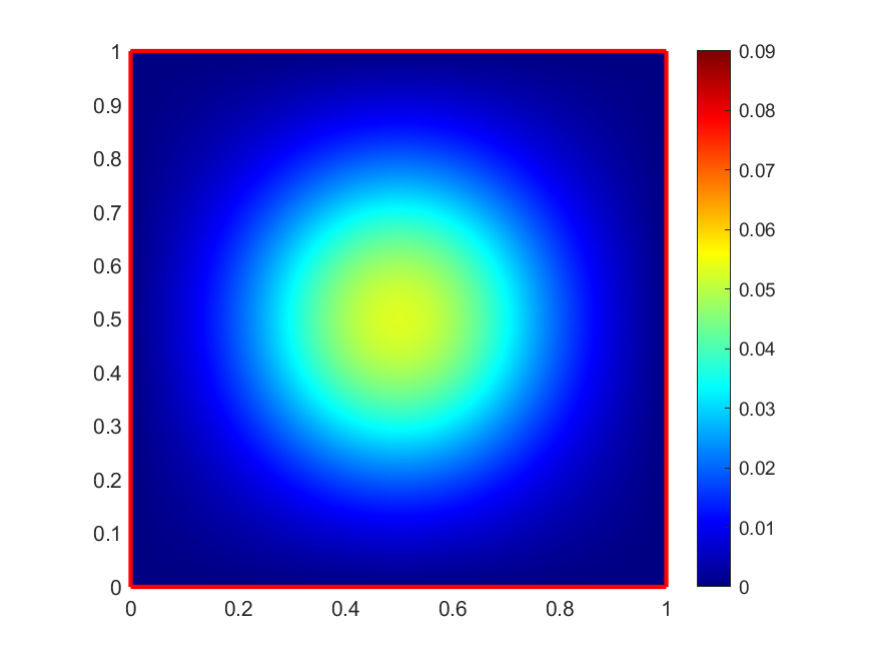}
		\includegraphics[width=.3\linewidth]{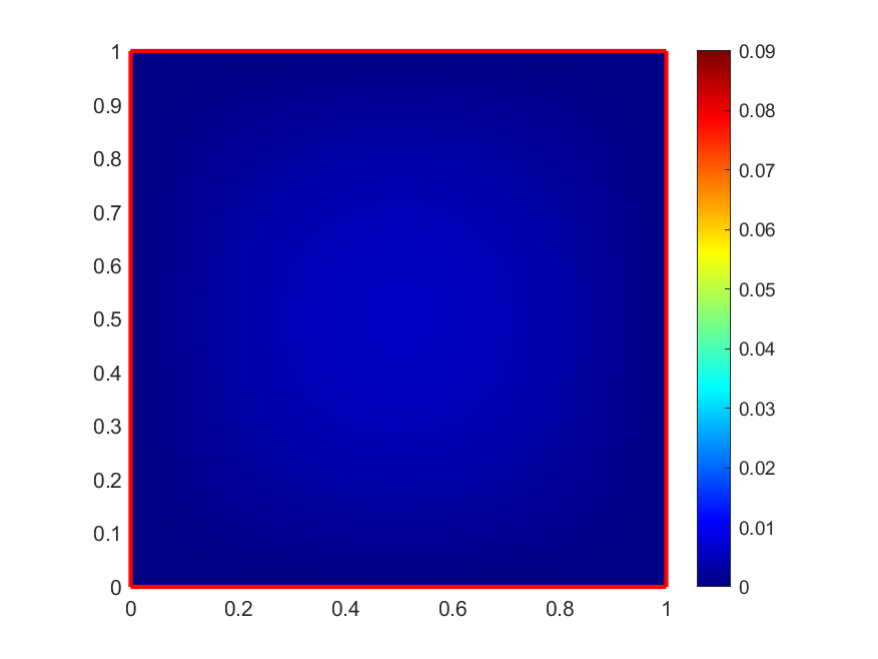}
		\caption*{(b) Spatial distribution of $I$ at $t=0$, $t=20$, and $t=40$.}
	\end{minipage}\\
	\begin{minipage}{\linewidth}
		\includegraphics[width=.3\linewidth]{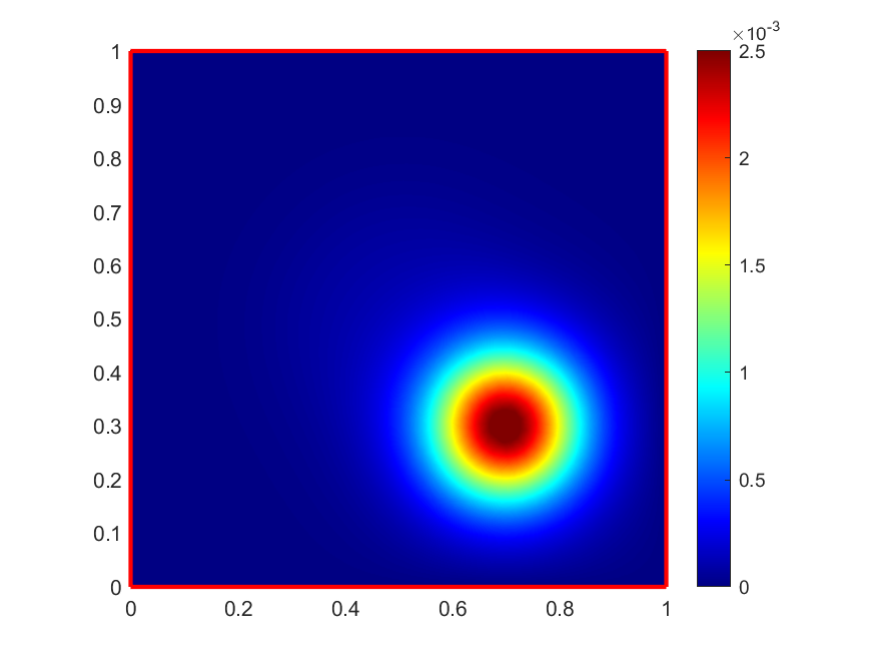}
		\includegraphics[width=.3\linewidth]{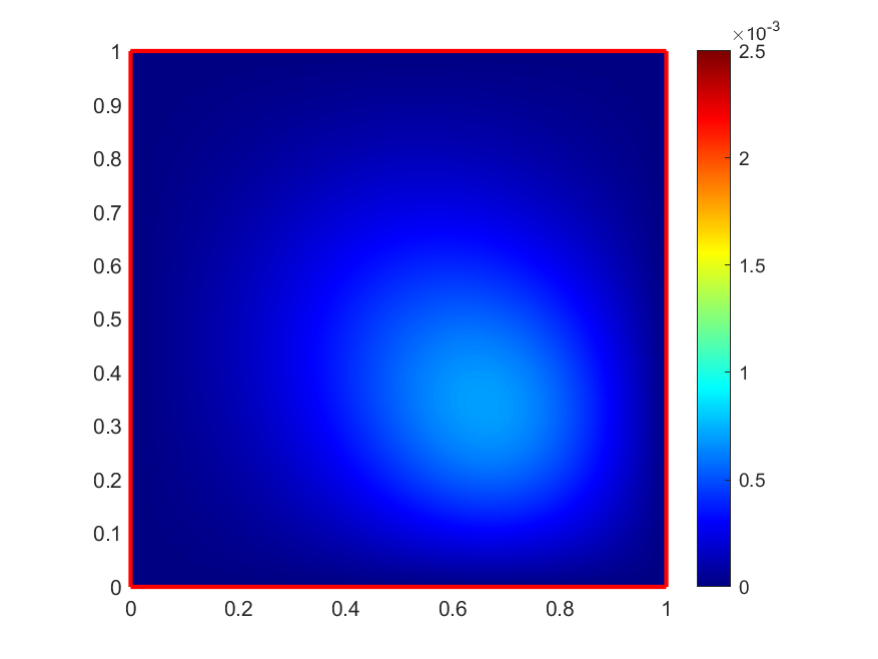}
		\includegraphics[width=.3\linewidth]{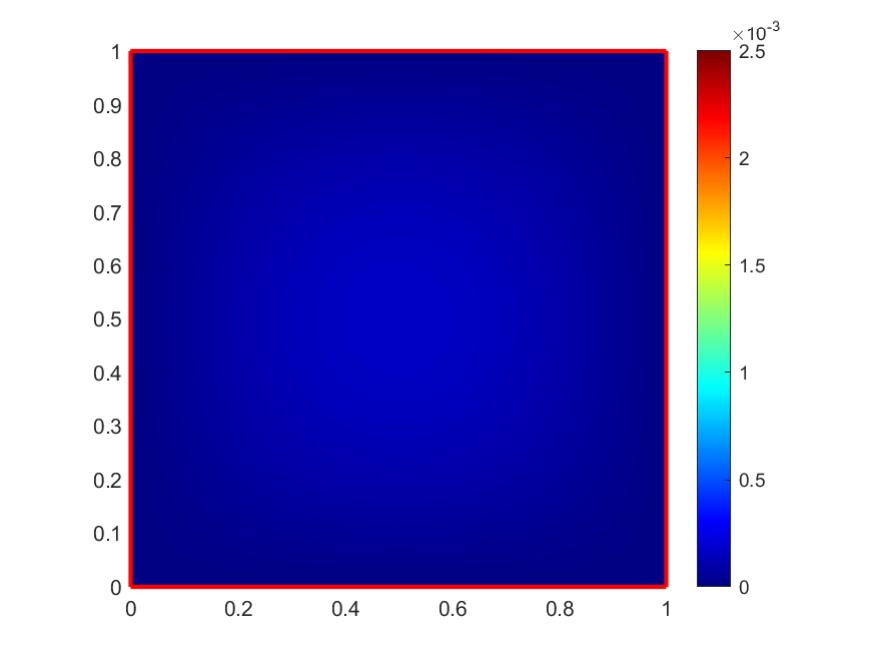}
		\caption*{(c) Spatial distribution of $R$ at $t=0$, $t=20$, and $t=40$.}
	\end{minipage}
	\caption{Example 1 – Diffusion-driven SIR dynamics without pathogen or fluid.  
		Starting from a localized infection, the epidemic front diffuses outward, reducing $S$ around the source and increasing $R$ behind the front.  
		The fields remain smooth and symmetric, reflecting pure diffusion where transmission occurs only through local contact.}
	\label{fig:SIR-evolution}
\end{figure}

\begin{figure}[H]
	\centering
	\includegraphics[width=.3\linewidth]{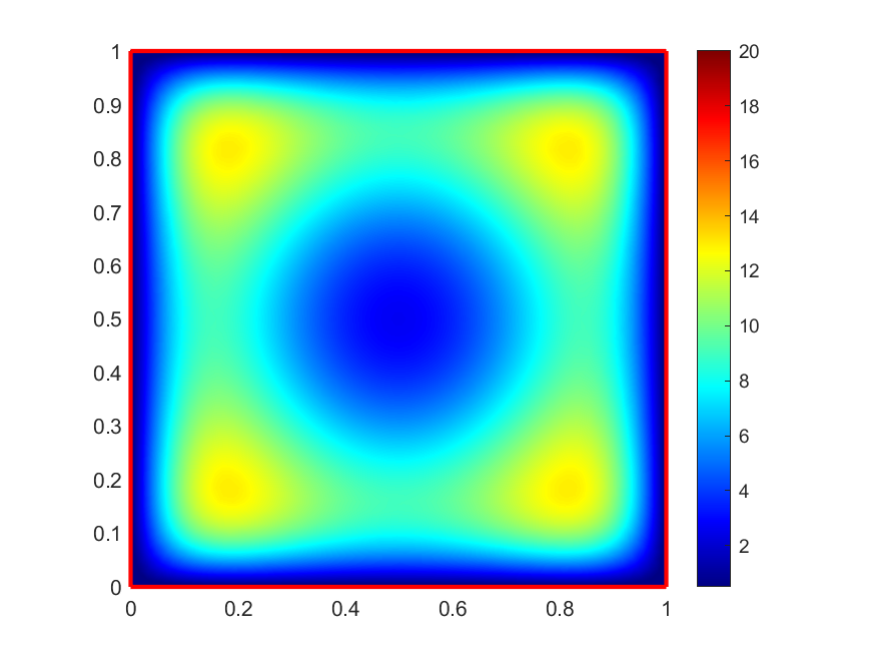}
	\includegraphics[width=.3\linewidth]{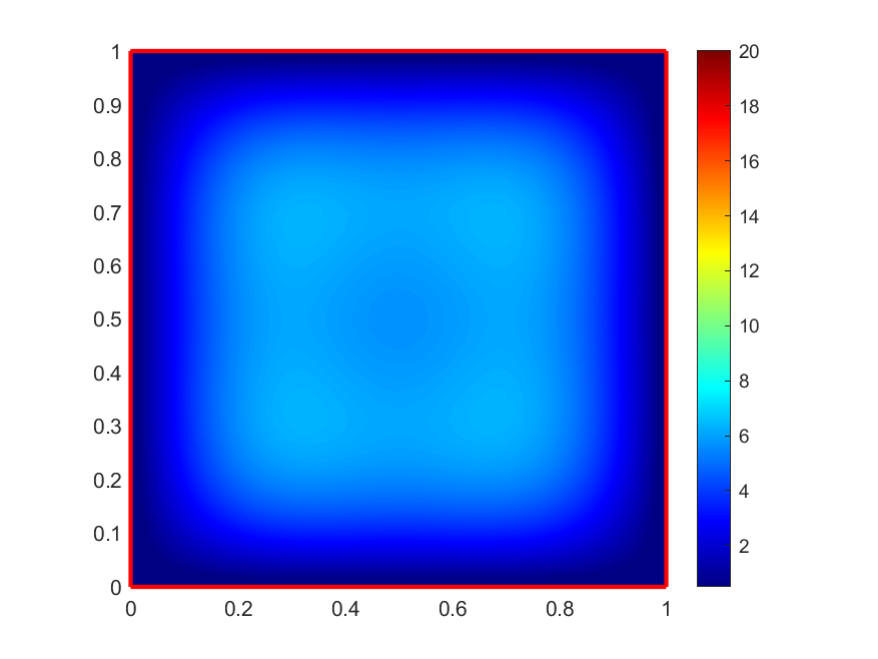}
	\includegraphics[width=.3\linewidth]{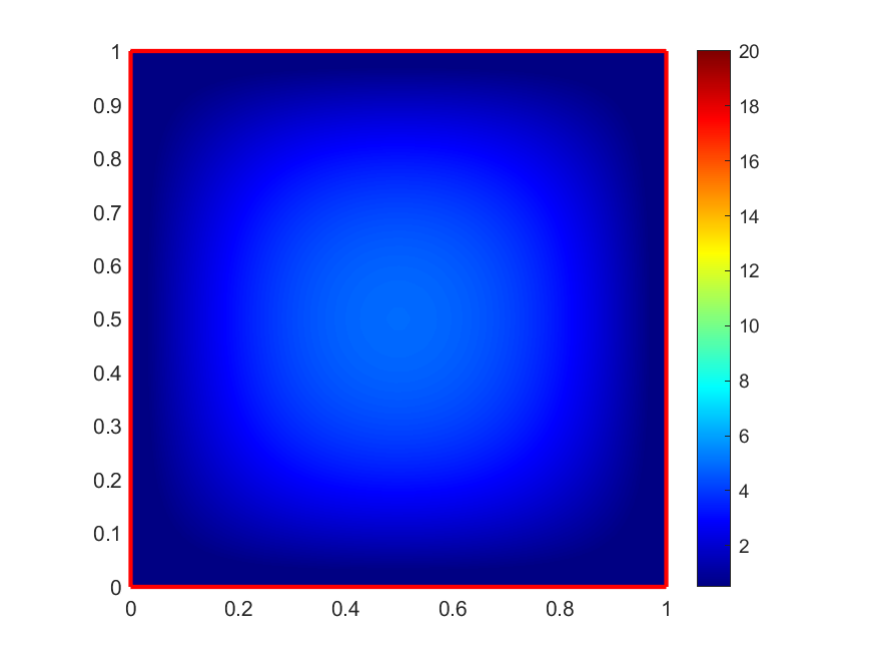}
	\caption{Evolution of the pathogen concentration field $C$ in the absence of epidemic feedback.  
		The contaminant spreads outward and gradually decays in amplitude, producing smooth, symmetric profiles.  
		This preliminary behaviour represents possible contamination zones that may act as sources for subsequent infection.}
	\label{fig:C-evolution}
\end{figure}

\begin{figure}[H]
	\centering
	\begin{minipage}{\linewidth}
		\includegraphics[width=.3\linewidth]{test1-S0-eps-converted-to.pdf}
		\includegraphics[width=.3\linewidth]{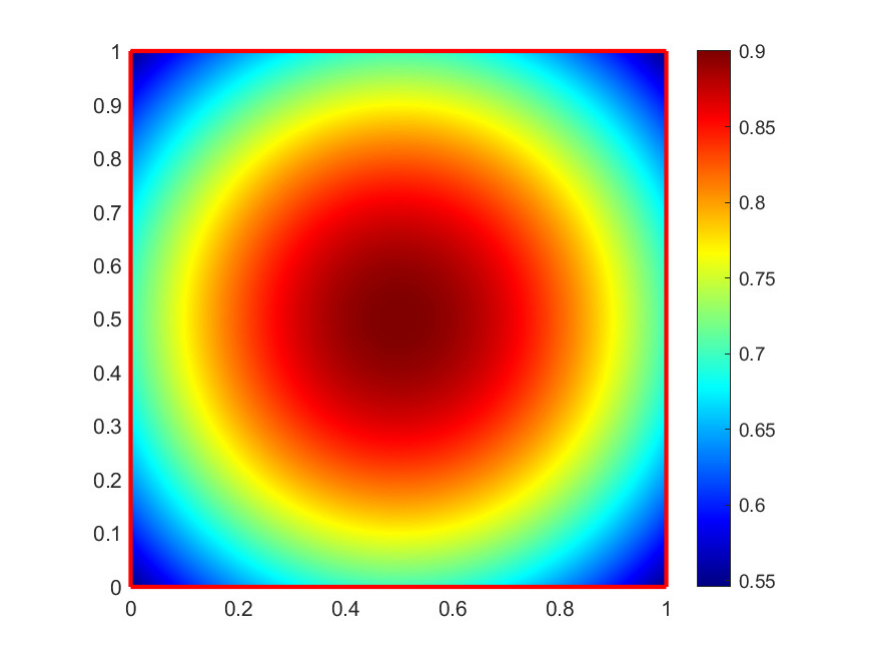}
		\includegraphics[width=.3\linewidth]{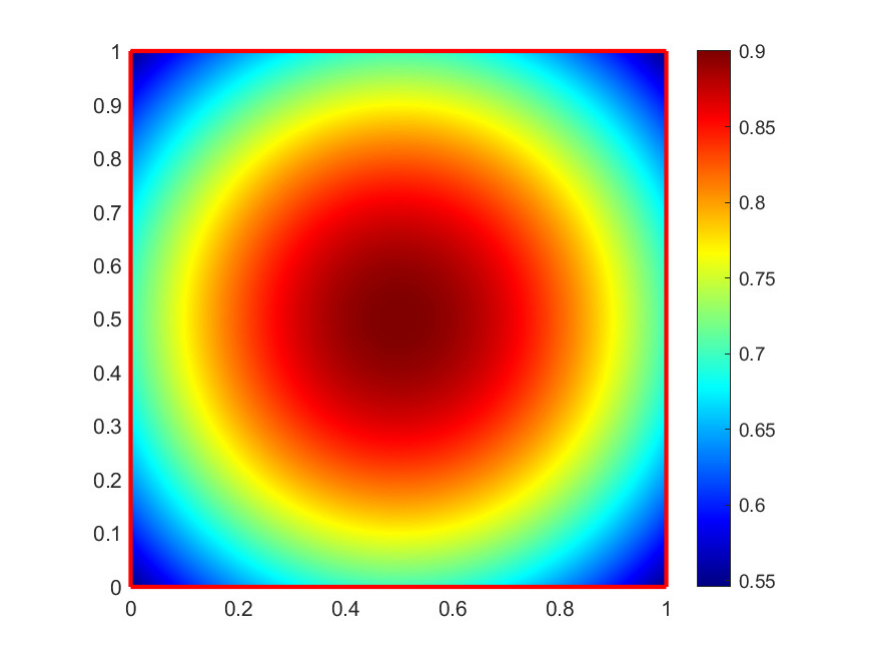}
		\caption*{(a) $S$ at $t=0$, $t=20$, and $t=40$.}
	\end{minipage}\\
	\begin{minipage}{\linewidth}
		\includegraphics[width=.3\linewidth]{test1-I0-eps-converted-to.pdf}
		\includegraphics[width=.3\linewidth]{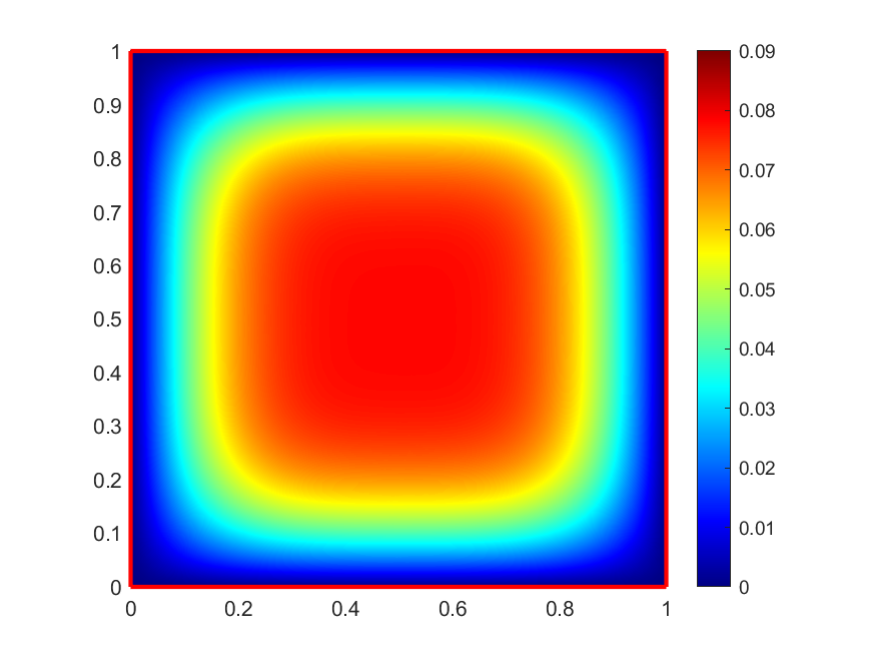}
		\includegraphics[width=.3\linewidth]{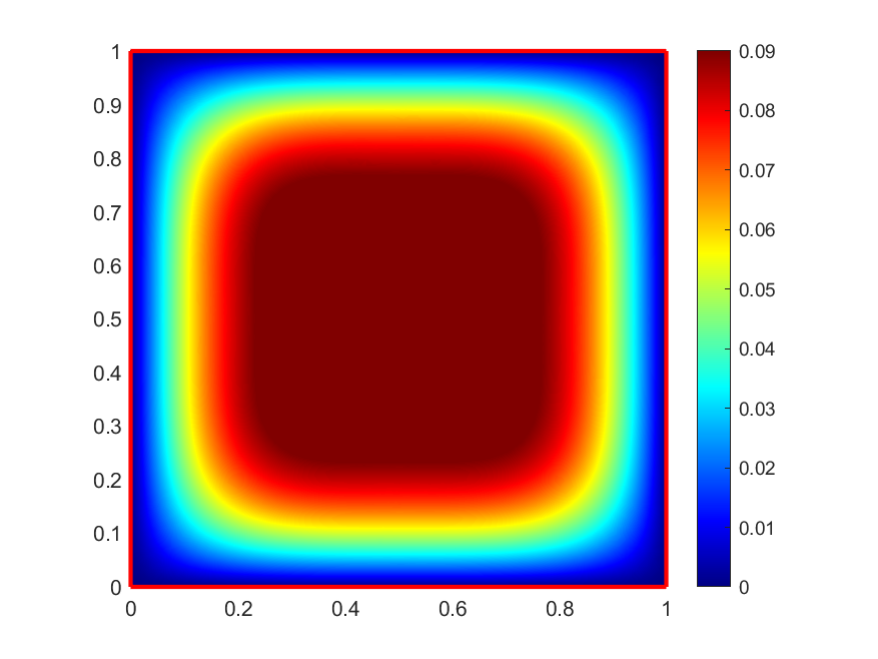}
		\caption*{(b) $I$ at $t=0$, $t=20$, and $t=40$.}
	\end{minipage}\\
	\begin{minipage}{\linewidth}
		\includegraphics[width=.3\linewidth]{test1-R0-eps-converted-to.pdf}
		\includegraphics[width=.3\linewidth]{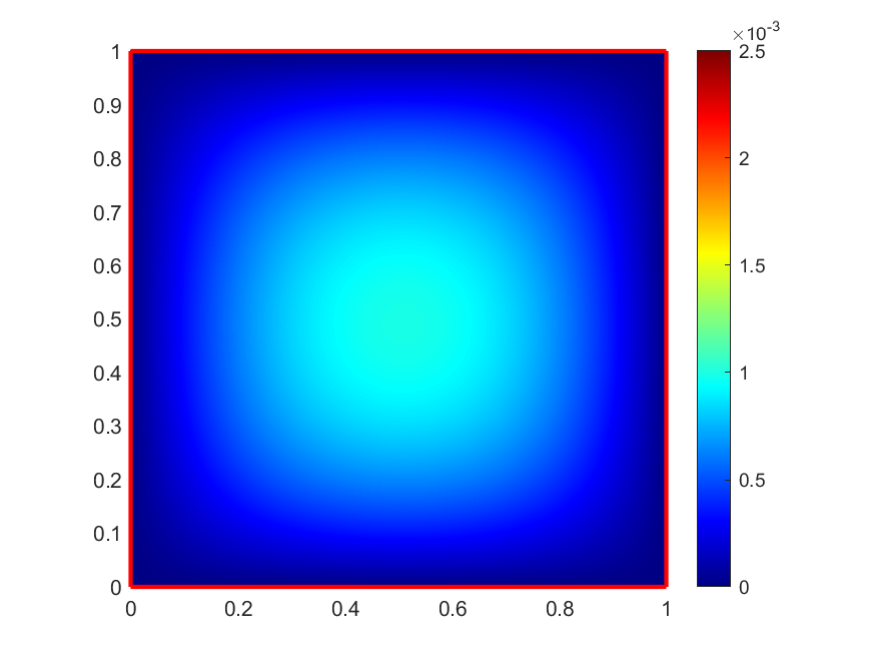}
		\includegraphics[width=.3\linewidth]{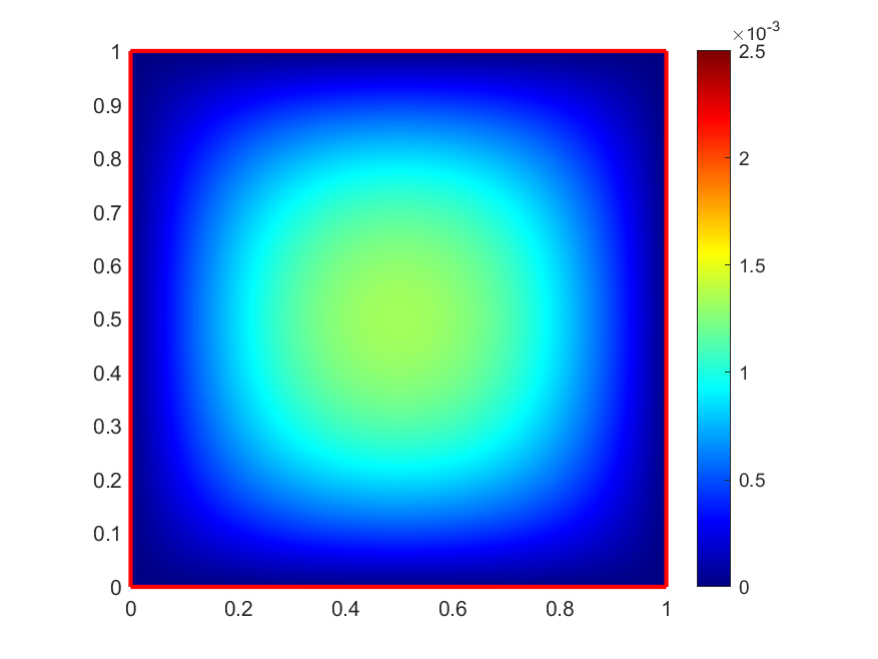}
		\caption*{(c) $R$ at $t=0$, $t=20$, and $t=40$.}
	\end{minipage}\\
	\begin{minipage}{\linewidth}
		\includegraphics[width=.3\linewidth]{test22-C5-eps-converted-to.pdf}
		\includegraphics[width=.3\linewidth]{test22-C20-eps-converted-to.pdf}
		\includegraphics[width=.3\linewidth]{test22-C40-eps-converted-to.pdf}
		\caption*{(d) $C$ at $t=0$, $t=20$, and $t=40$.}
	\end{minipage}
	\caption{Example 2 – Coupled SIR dynamics with environmental contamination, $\beta(C)=\beta_0+C$ (no fluid). Pathogen concentration amplifies infection near the contamination center, leading to faster growth of $I$ and quicker depletion of $S$.  
	The contaminant $C$ diffuses outward and decays slowly, sustaining transmission even after the direct-contact phase declines. $R$ increases progressively, confirming the reinforcing effect of environmental reservoirs on epidemic persistence.}
	\label{fig:SIRC-evolution}
\end{figure}

\begin{figure}[H]
	\centering
	\begin{minipage}{\linewidth}
		\includegraphics[width=.3\linewidth]{test1-S0-eps-converted-to.pdf}
		\includegraphics[width=.3\linewidth]{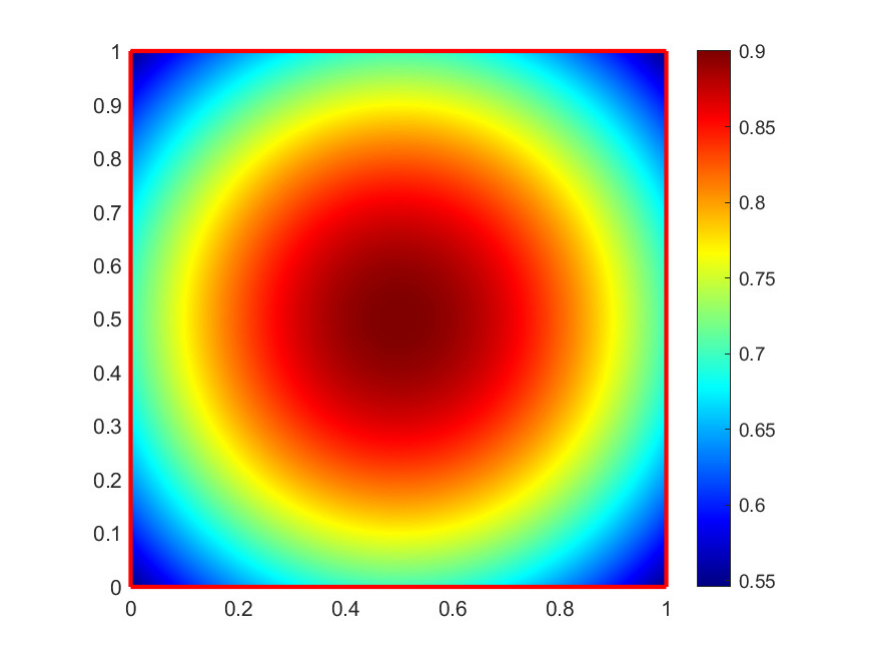}
		\includegraphics[width=.3\linewidth]{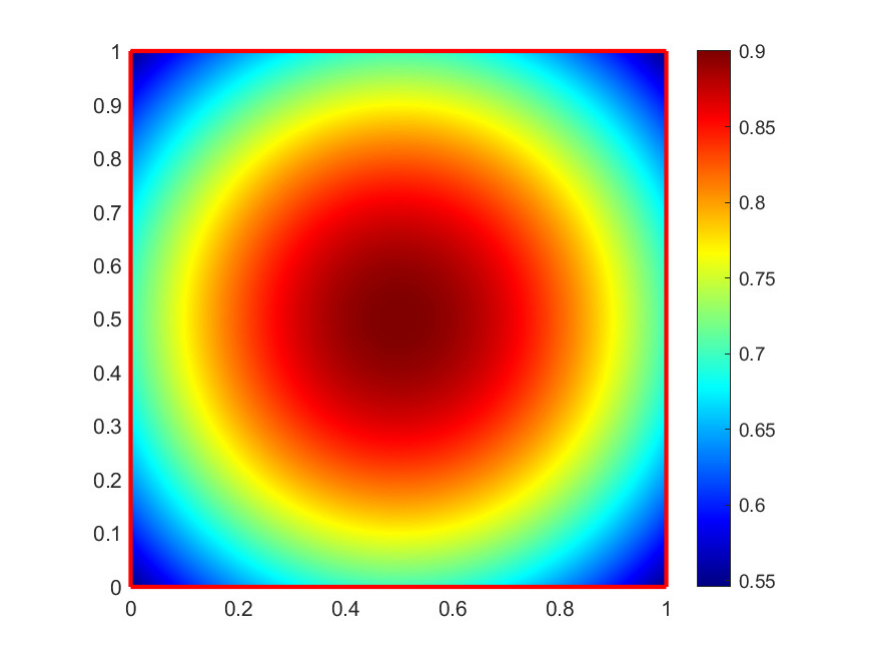}
		\caption*{(a) $S$ at $t=0$, $t=20$, and $t=40$.}
	\end{minipage}\\
	\begin{minipage}{\linewidth}
		\includegraphics[width=.3\linewidth]{test1-I0-eps-converted-to.pdf}
		\includegraphics[width=.3\linewidth]{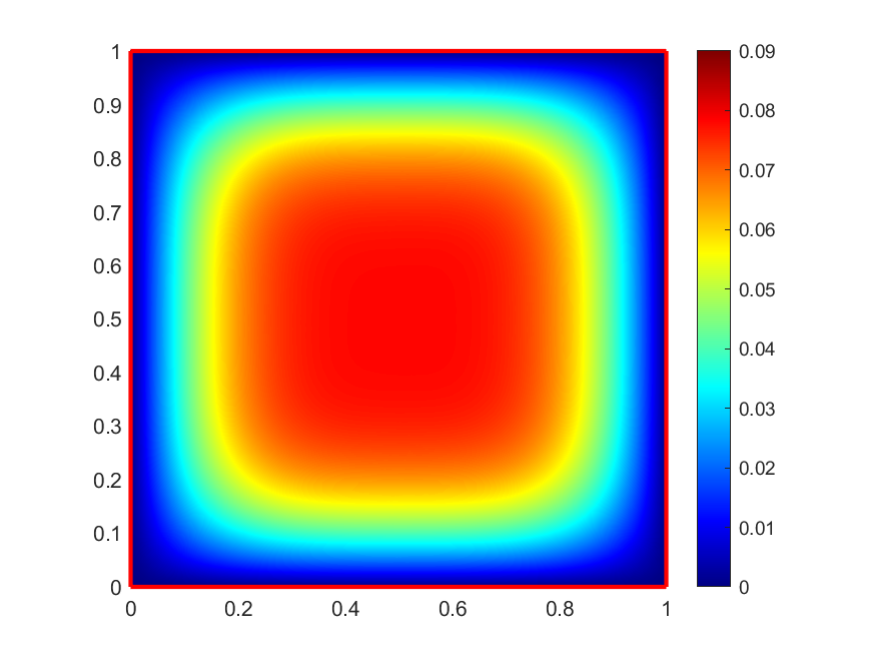}
		\includegraphics[width=.3\linewidth]{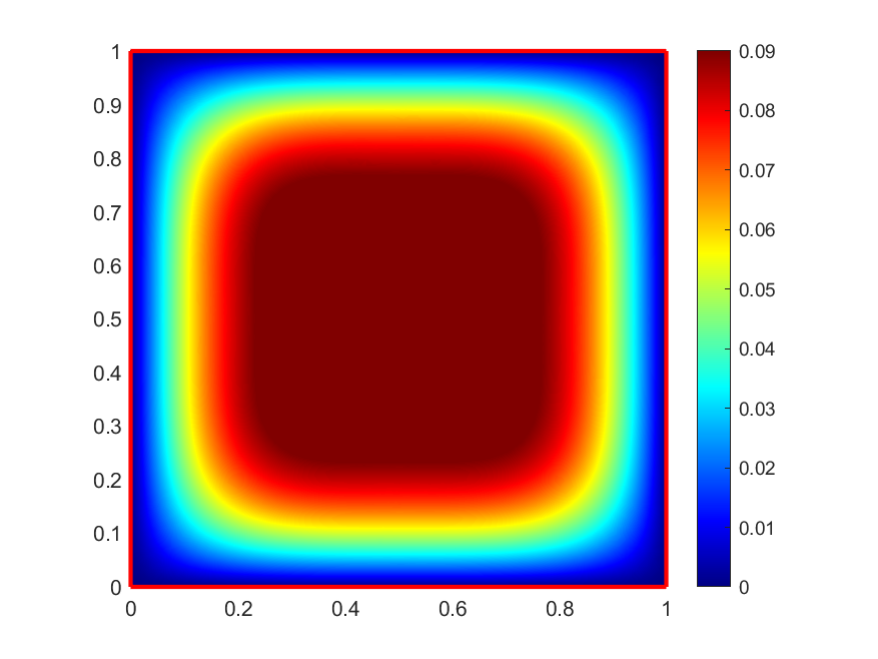}
		\caption*{(b) $I$ at $t=0$, $t=20$, and $t=40$.}
	\end{minipage}\\
	\begin{minipage}{\linewidth}
		\includegraphics[width=.3\linewidth]{test1-R0-eps-converted-to.pdf}
		\includegraphics[width=.3\linewidth]{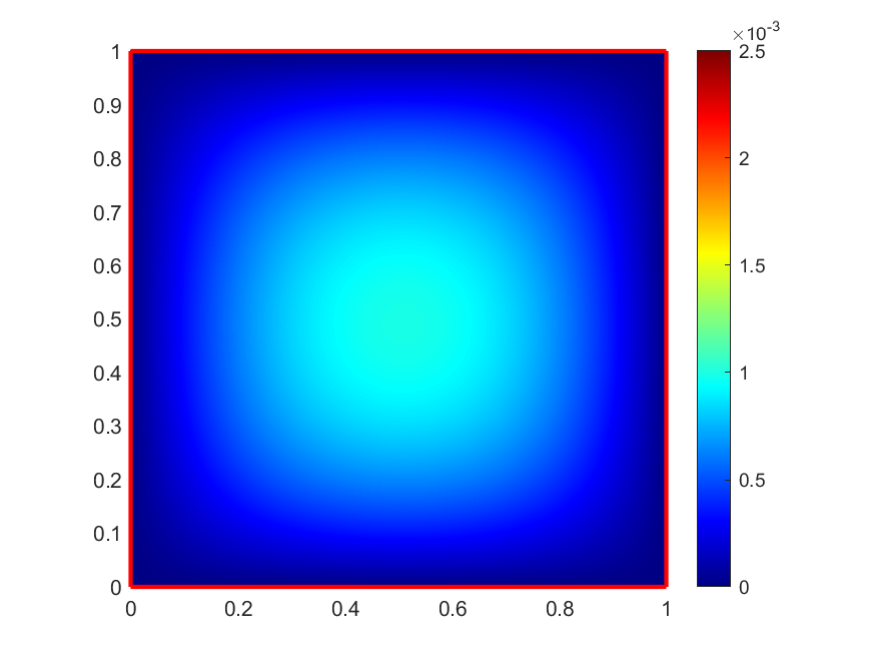}
		\includegraphics[width=.3\linewidth]{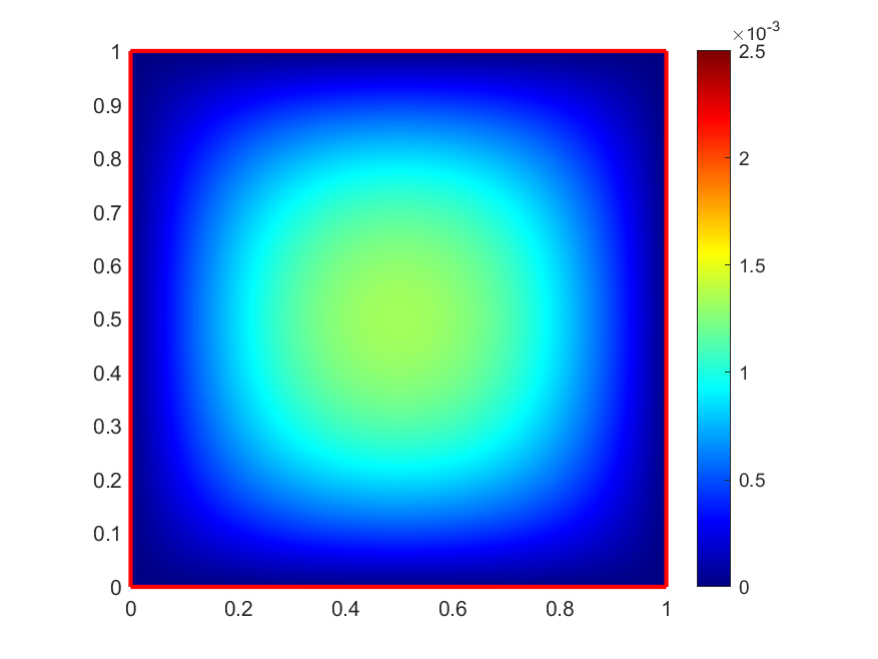}
		\caption*{(c) $R$ at $t=0$, $t=20$, and $t=40$.}
	\end{minipage}\\
	\begin{minipage}{\linewidth}
		\includegraphics[width=.3\linewidth]{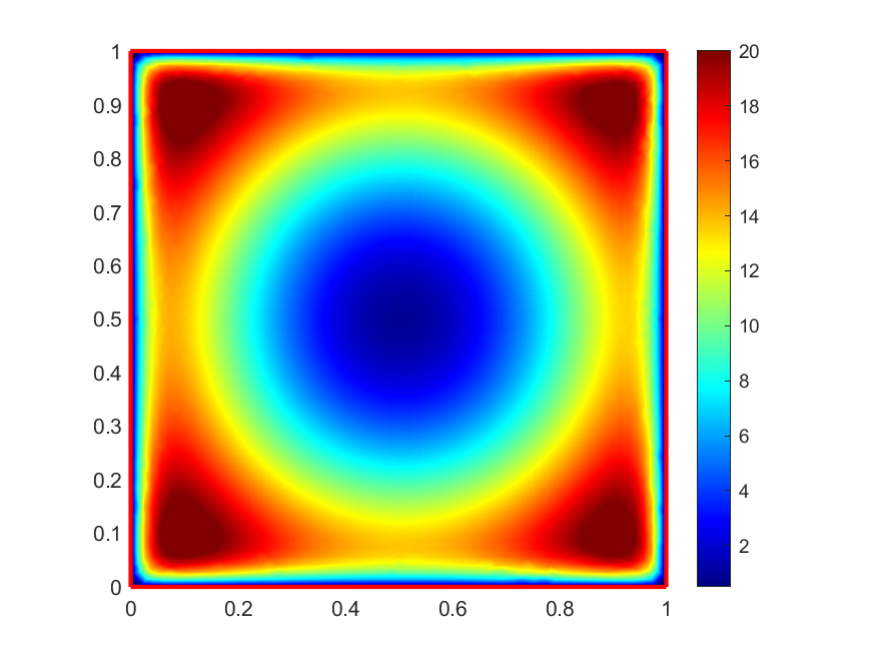}
		\includegraphics[width=.3\linewidth]{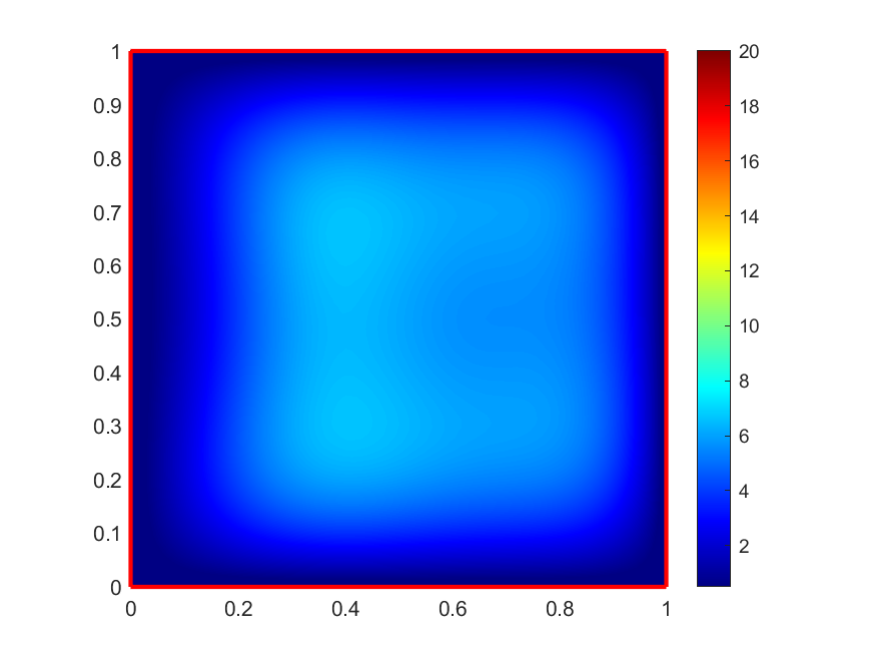}
		\includegraphics[width=.3\linewidth]{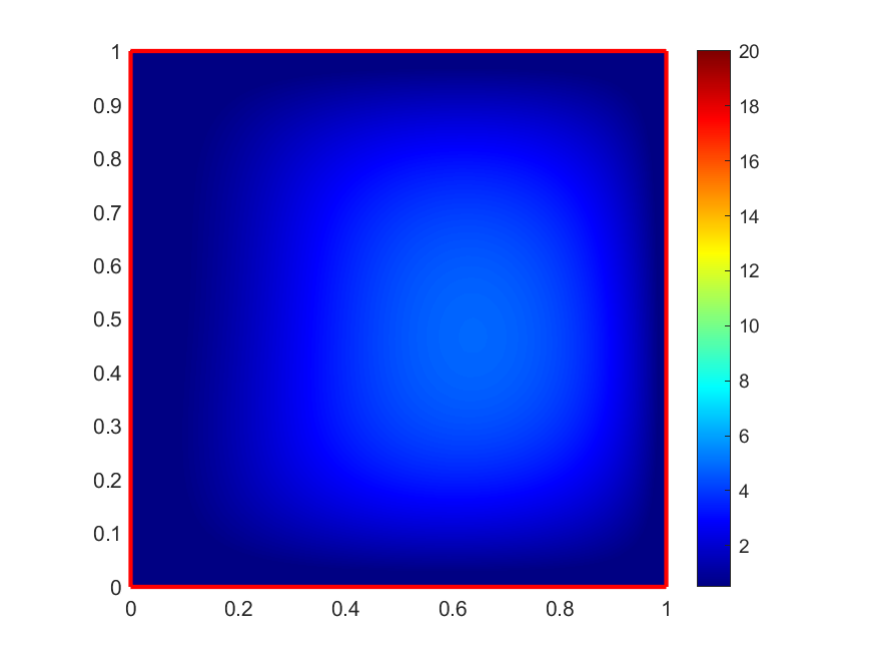}
		\caption*{(d) $C$ at $t=0$, $t=20$, and $t=40$.}
	\end{minipage}\\
	\begin{minipage}{\linewidth}
		\includegraphics[width=.3\linewidth]{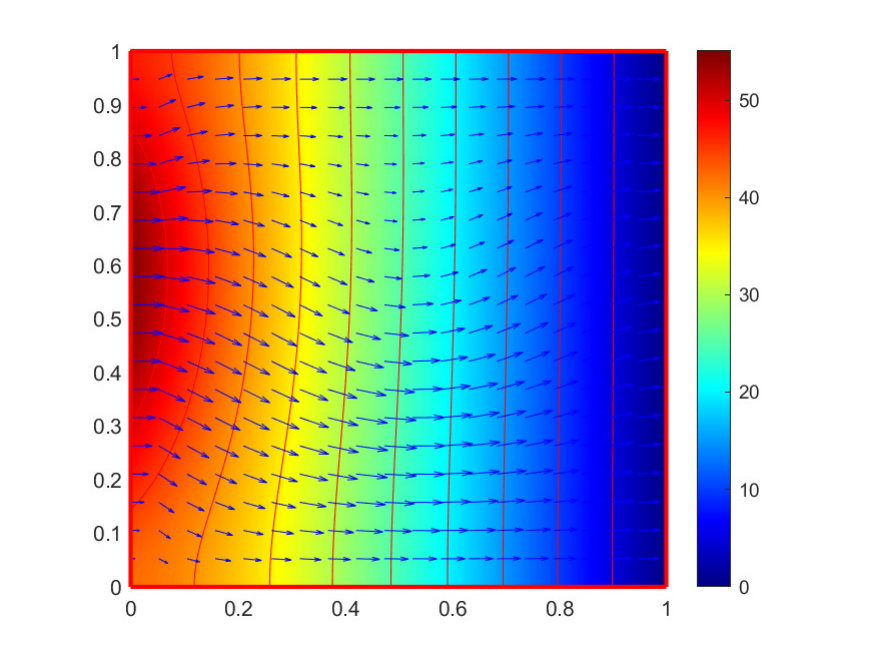}
		\includegraphics[width=.3\linewidth]{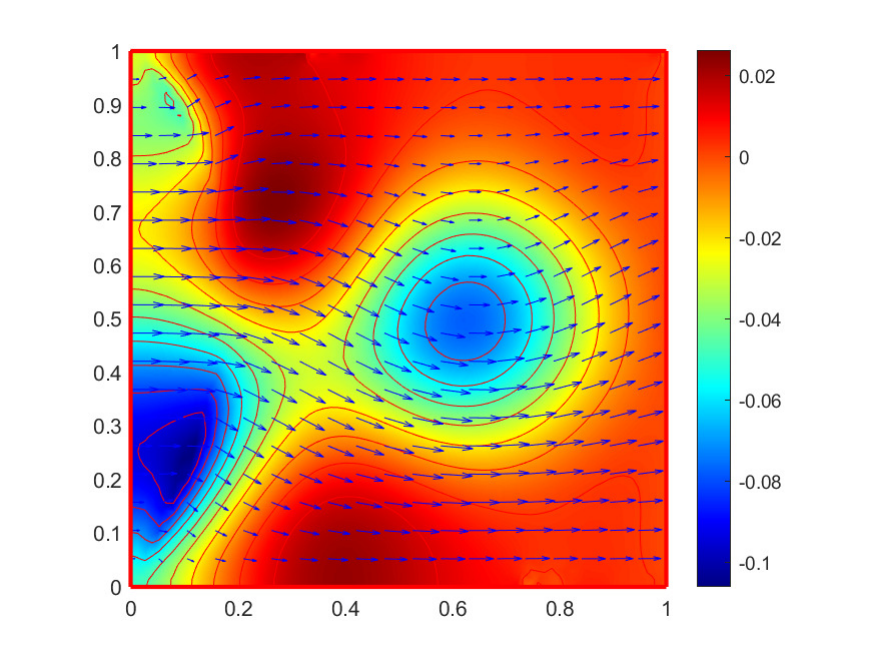}
		\includegraphics[width=.3\linewidth]{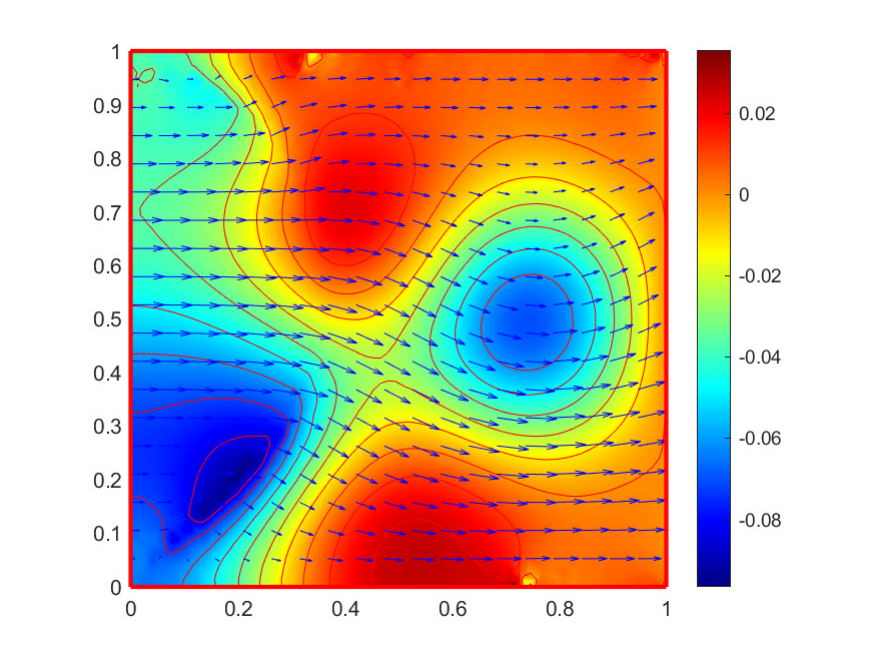}
		\caption*{(e) $\boldsymbol{U}$ at $t=0$, $t=20$, and $t=40$.}
	\end{minipage}
	\caption{Example 3 – Hydrodynamic coupling with constant viscosity ($\nu=\mathrm{const}$). The velocity field $\boldsymbol{U}$ advects the pathogen, stretching the infected area along streamlines and enlarging its spatial reach.  
	The resulting dispersion accelerates epidemic spread compared to the static case, illustrating the amplifying role of fluid transport.}
	\label{fig:CIRCU-const}
\end{figure}

\begin{figure}[H]
	\centering
	\begin{minipage}{\linewidth}
		\includegraphics[width=.3\linewidth]{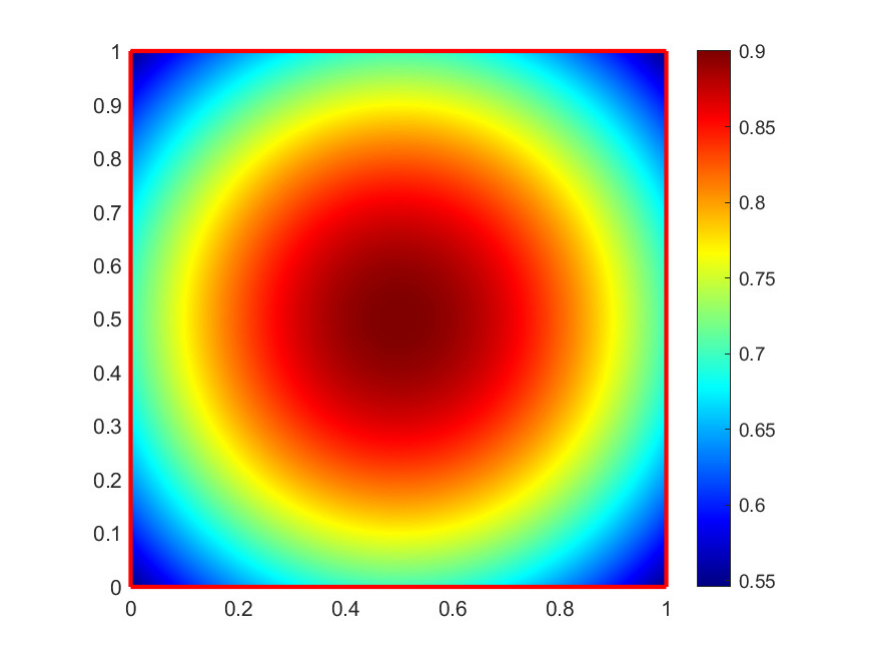}
		\includegraphics[width=.3\linewidth]{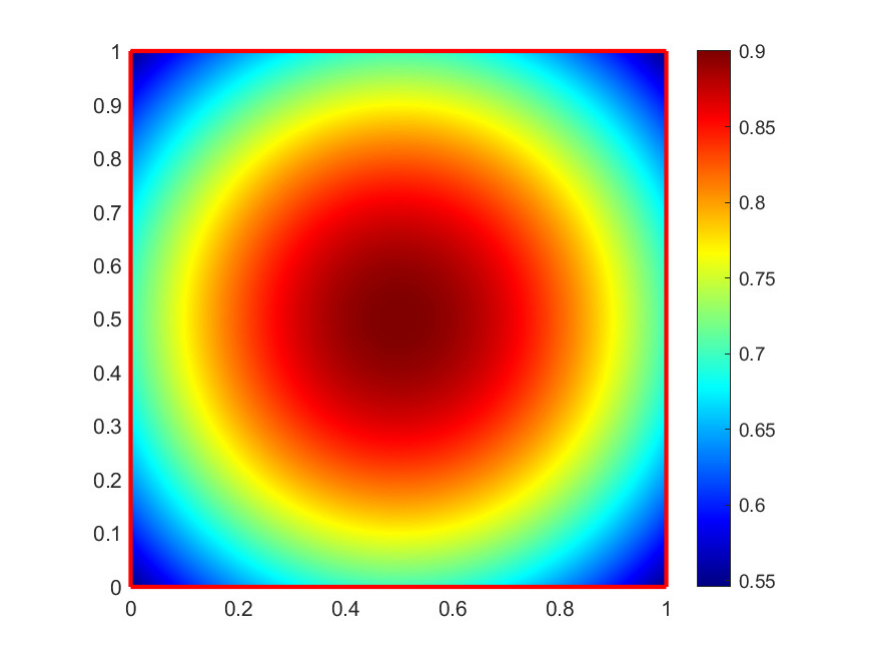}
		\includegraphics[width=.3\linewidth]{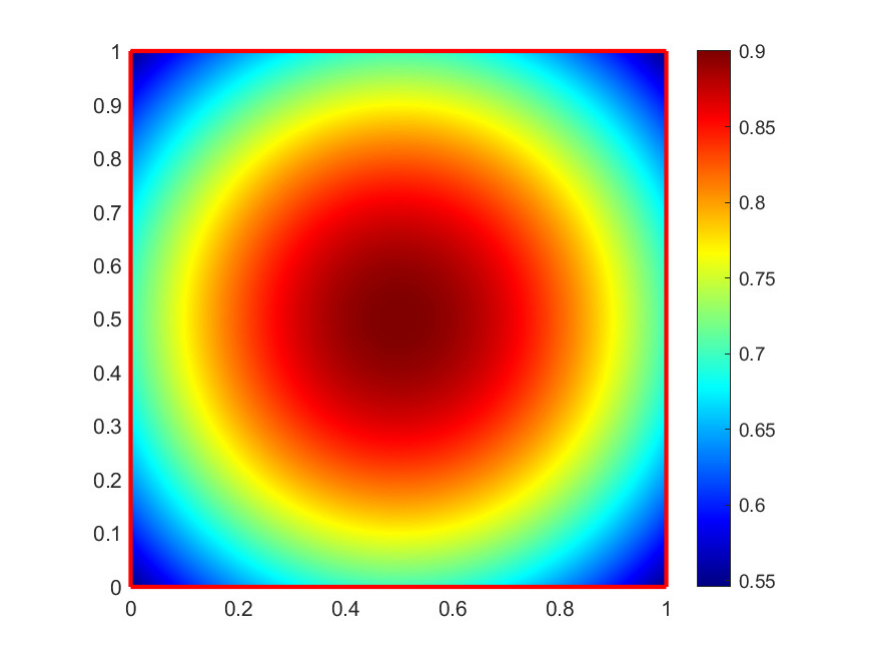}
		\caption*{(a) $S$ at $t=0$, $t=20$, and $t=40$.}
	\end{minipage}\\
	\begin{minipage}{\linewidth}
		\includegraphics[width=.3\linewidth]{test1-I0-eps-converted-to.pdf}
		\includegraphics[width=.3\linewidth]{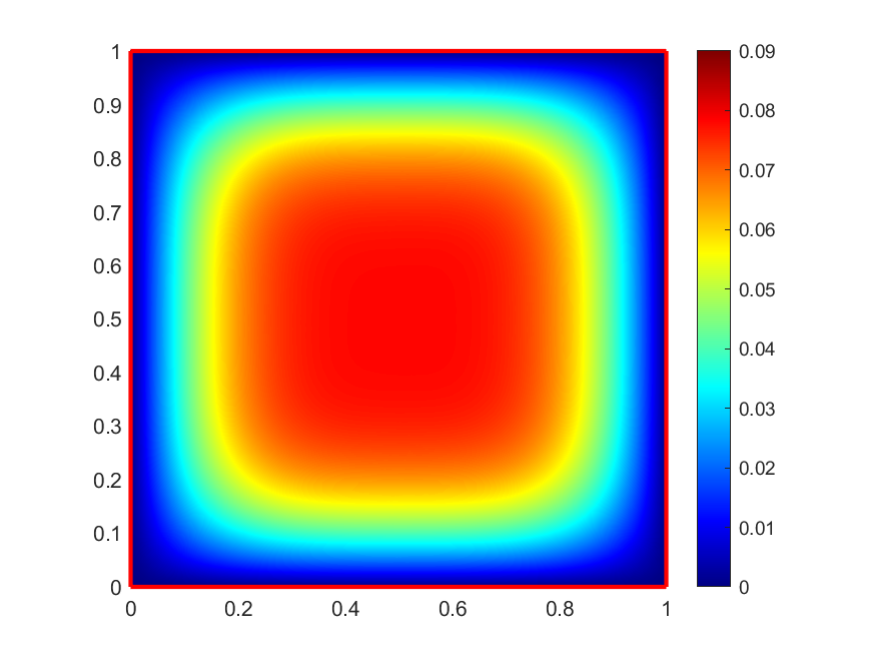}
		\includegraphics[width=.3\linewidth]{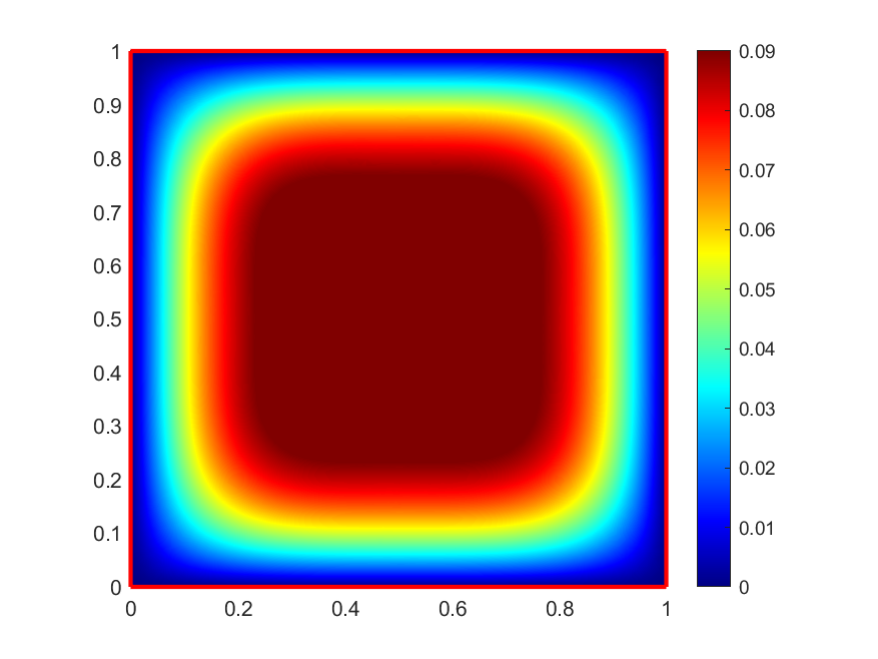}
		\caption*{(b) $I$ at $t=0$, $t=20$, and $t=40$.}
	\end{minipage}\\
	\begin{minipage}{\linewidth}
		\includegraphics[width=.3\linewidth]{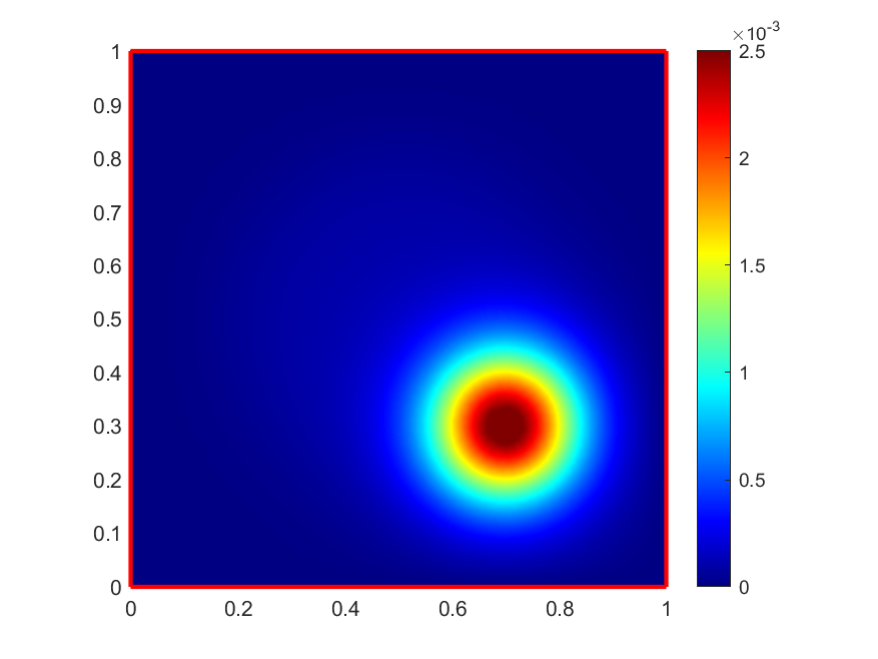}
		\includegraphics[width=.3\linewidth]{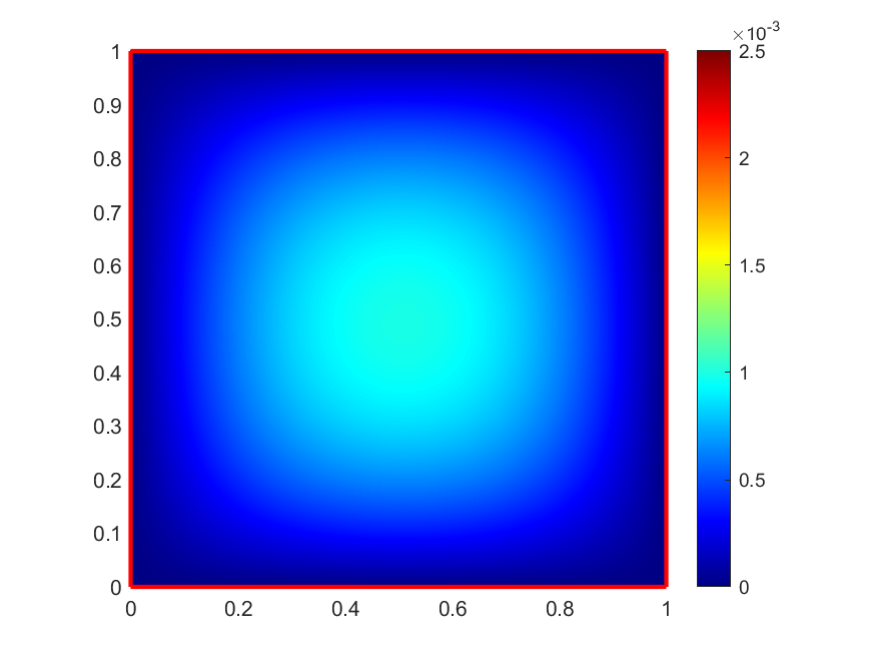}
		\includegraphics[width=.3\linewidth]{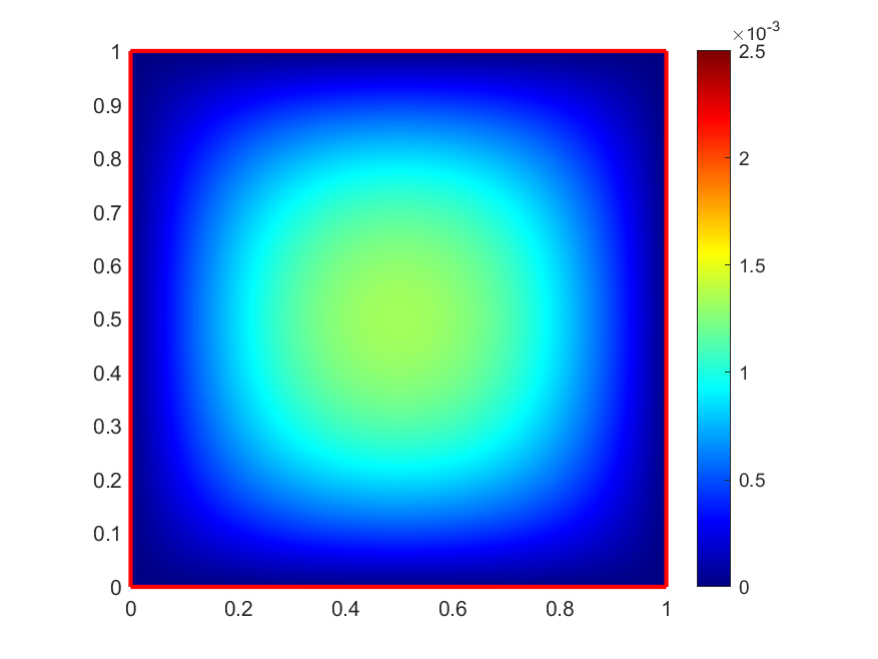}
		\caption*{(c) $R$ at $t=0$, $t=20$, and $t=40$.}
	\end{minipage}\\
	\begin{minipage}{\linewidth}
		\includegraphics[width=.3\linewidth]{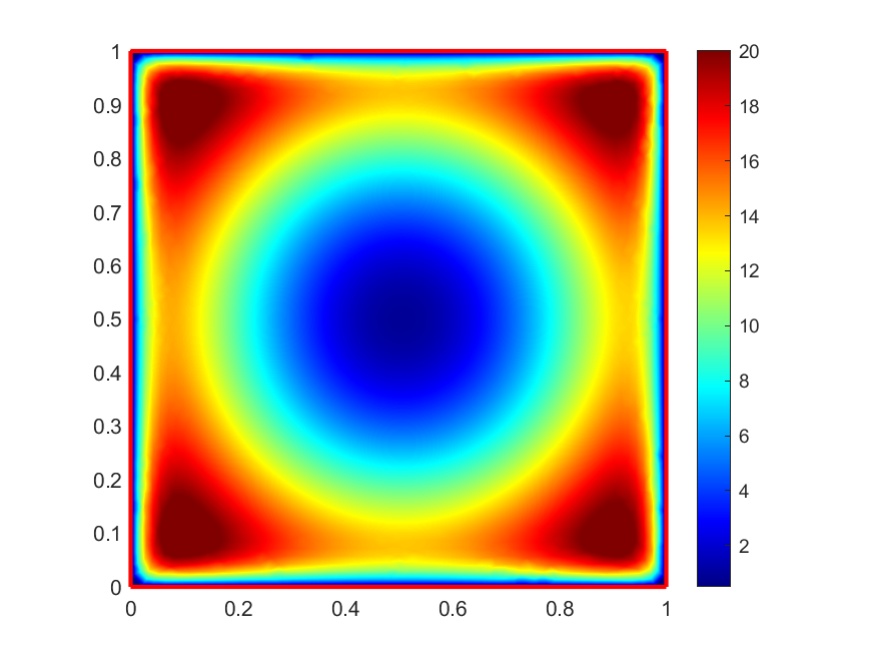}
		\includegraphics[width=.3\linewidth]{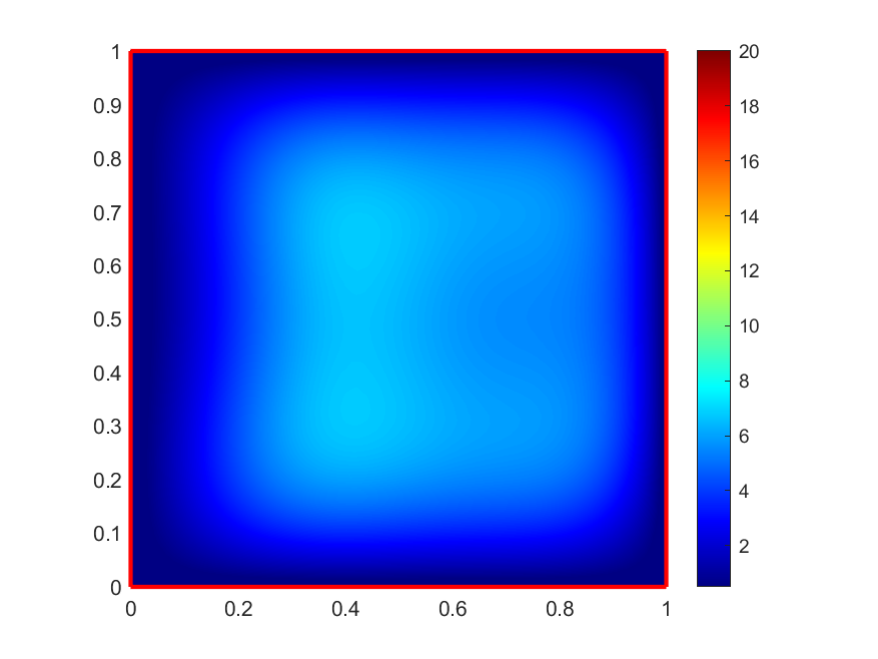}
		\includegraphics[width=.3\linewidth]{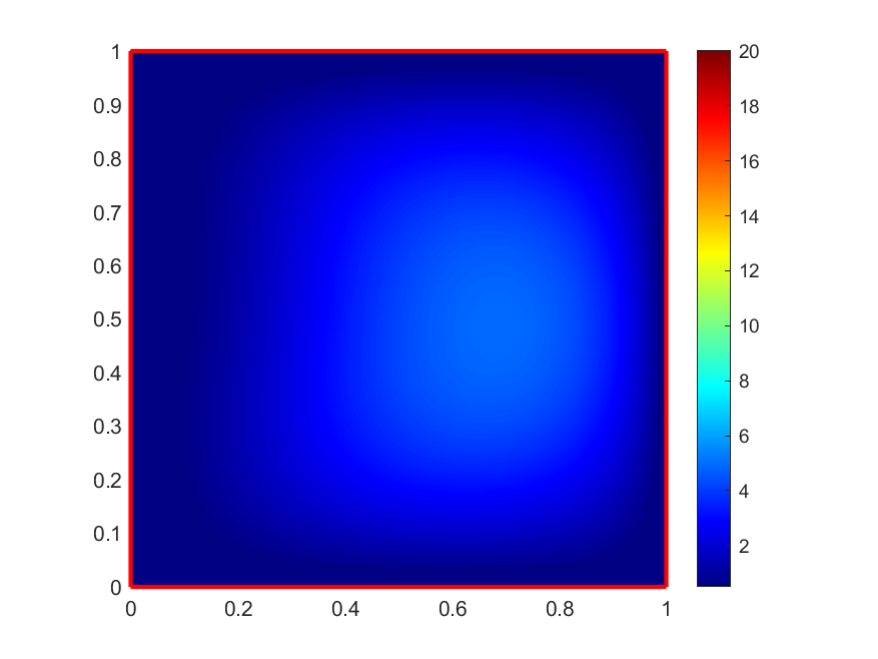}
		\caption*{(d) $C$ at $t=0$, $t=20$, and $t=40$.}
	\end{minipage}\\
	\begin{minipage}{\linewidth}
		\includegraphics[width=.3\linewidth]{test31-V0-eps-converted-to.pdf}
		\includegraphics[width=.3\linewidth]{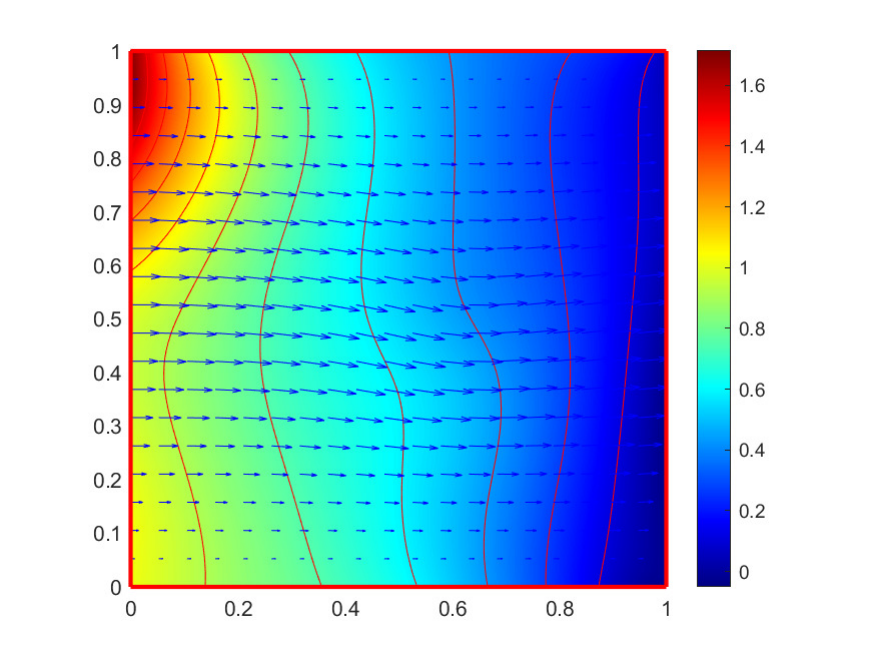}
		\includegraphics[width=.3\linewidth]{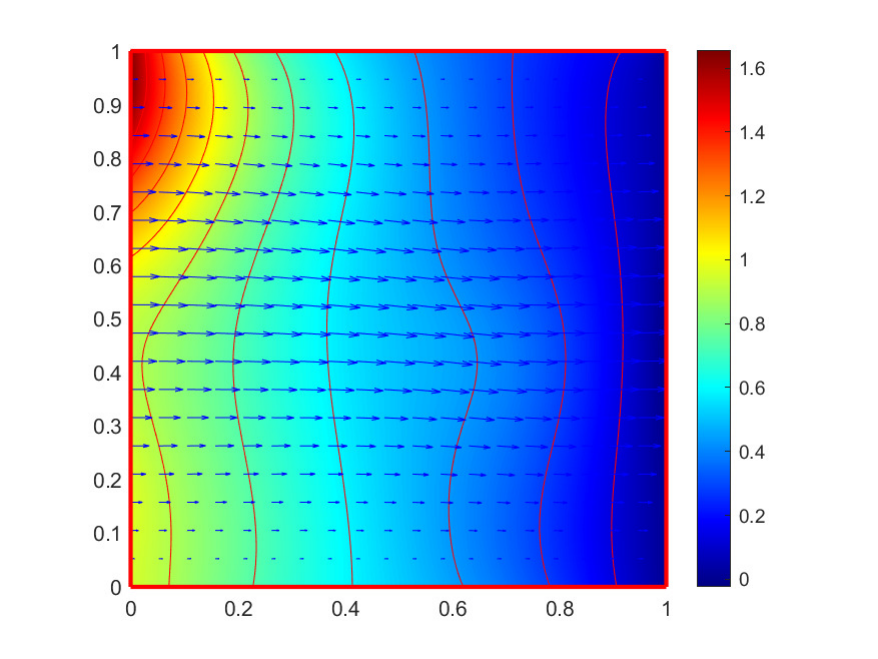}
		\caption*{(e) $\boldsymbol{U}$ at $t=0$, $t=20$, and $t=40$.}
	\end{minipage}
	\caption{Example 4 – Hydrodynamic coupling with variable viscosity ($\nu=\nu_0+C$).  
		Contaminated regions locally increase viscosity, reducing flow velocity and producing stagnation zones where pathogens accumulate.  
		This feedback limits dispersion but enhances long-term persistence, yielding heterogeneous patterns consistent with realistic biofluid behavior.}
	\label{fig:CIRCU-var}
\end{figure}

	\section{Conclusion and future work}\label{conclusion}
In this paper, we introduced and examined a new coupled SIR–Pathogen–Navier–Stokes (SIRPNS) system that explores the interplay between disease dynamics, environmental pollution, and fluid movement. Our model features a two-way connection between pathogen levels and fluid flow, using a viscosity that changes with concentration. This approach expands the traditional SIR model to include hydrodynamic effects, which are crucial for understanding waterborne diseases. Mathematically, we demonstrated the global existence and uniqueness of biologically valid weak solutions by employing the Faedo–Galerkin method along with compactness and energy estimates. This means our system is not only analytically sound but also biologically relevant.

The numerical experiments, carried out using finite element discretization, offered important insights into how the model behaves under different scenarios. Without fluid movement, the model mirrored the classic diffusion-driven epidemic cycle, where infections spread locally and stabilize through recovery. However, when we added environmental contamination, the concentration of pathogens increased the intensity of infections and extended the duration of the epidemic, even after hosts recovered. By incorporating fluid dynamics, we discovered that advection and viscosity feedback play a significant role in shaping spatial transmission patterns: constant-viscosity flows promote dispersal, while variable-viscosity feedback creates localized stagnation zones that trap pathogens, resulting in uneven and prolonged outbreaks. These results highlighted the critical influence of hydrodynamic feedback and environmental persistence on real-world epidemic behavior.

In the future, we plan to expand our current framework in a few key ways. On the theoretical front, adding stochastic perturbations or considering spatial variations could really enhance our understanding of uncertainty and how patterns form in contaminated environments. This is a relatively new area of exploration, with several researchers recently diving into scenarios where models lack fluid dynamics \cite{mehdaoui2024well,bendahmane2025global}. From a practical standpoint, integrating our model with actual hydrological or climate data would allow us to make precise predictions about contamination routes in rivers, coastal areas, or wastewater systems. Additionally, we could look into optimal control and data assimilation techniques to create effective intervention strategies for tackling waterborne epidemics.
	\subsection*{Acknowledgment}
	M. Mehdaoui would like to express his gratitude to the Euromed University of Fez, UEMF, for its valuable support.
	\subsection*{Data availability}
	Not applicable.
	\subsection*{Funding}
	No funding was received.
	\subsection*{Conflict of interest}
	No conflict of interest to be declared.
	\bibliographystyle{abbrvurl}
	\bibliography{References.bib}	
\end{document}